\documentclass[12pt]{amsart}
\usepackage{amsfonts}
\usepackage{amsmath}
\usepackage{amsxtra}
\usepackage{amssymb,latexsym}
\usepackage[mathcal]{eucal}
\usepackage{amscd}
\usepackage{hyperref}

\newtheorem{theo}{{\bfseries Theorem}}[section]
\newtheorem{prop}[theo]{{\bfseries Proposition}}
\newtheorem{lem}[theo]{{\bfseries Lemma}}
\newtheorem{cor}[theo]{{\bfseries Corollary}}
\newtheorem{df}[theo]{{\bfseries Definition}}

\newtheorem{exes}[theo]{{\bfseries Examples}}

\def \ol {\overline}
\def \N {\mathbb N}

\def \Z {\mathbb Z}
\def \R {\mathbb R}
\def \Q {\mathbb Q}

\def \B {\mathcal B}
\def \CC {\mathcal C}

\def \L {\mathcal L}

\def \J {\mathcal J}

\def \M {\mathcal M}

\def \P {\mathcal P}

\def \S {\mathcal S}

\def \lm {\lambda}
\def \ep {\epsilon}

\def \th {\theta}
\def \d {\delta}

\def \s {\sigma}

\def \1  {{\mathbf 1}}
\def \0  {{\mathbf 0}}

\usepackage{amssymb,latexsym}
\usepackage[mathcal]{eucal}
\usepackage{amscd}
\usepackage{amsmath}
\usepackage{graphicx}
\usepackage{amsfonts}
\usepackage{amscd}

\numberwithin{equation}{section}
\begin{document}

\begin{titlepage}
\large
\title {\bfseries Invertibility, Often}
\author{Ethan Akin}
 \vspace{.7cm}

\address{Mathematics Department \\
    The City College \\ 137 Street and Convent Avenue \\
       New York City, NY 10031, USA     }
\email{ethanakin@earthlink.net}

\author{Benjamin Weiss}
 \vspace{.7cm}

\address{Institute of Mathematics \\ Hebrew University of Jerusalem \\ Jerusalem, Israel    }
\email{weiss@math.huji.ac.il}

\date{July, 2026}

{\footnotesize \begin{abstract} By using a similar pattern of arguments, we show that in four categories the collection of isomorphisms forms a residual
subset of the space of morphisms. We first consider surjective continuous mappings on Cantor spaces. Next, we look at measure preserving maps on
Polish measure spaces. We then consider the $L^1$ representations of nonsingular maps on
Polish measure spaces. Finally, we examine continuous, measure 
preserving maps on Cantor spaces equipped with so-called good measures.

\end{abstract}}

\keywords{ Cantor space, residual subset, invertible mappings, measure spaces, measure algebras, nonsingular mappings, isometries,
good measures, clopen values set }

\thanks{{\em 2010 Mathematical Subject Classification} 54C05, 28C15, 28D05}

\end{titlepage}
\maketitle

\tableofcontents

\vspace{1cm}
\setcounter{page}{1}
\section{ \textbf{Introduction}}\vspace{.25cm}

Consider the \emph{tent map} $T : [0,1] \to [0,1]$, defined by
\begin{equation}\label{eq1.01}
T(t) = \begin{cases} 2t \ \ \ \text{for} \ 0 \le t \le \frac{1}{2}, \\ 2(1 - t) \ \text{for} \  \frac{1}{2} \le t \le 1. \end{cases}
\end{equation}

This is a surjective, continuous map. It is clear that it cannot be uniformly approximated by a homeomorphism. If $J$ is a non-trivial closed
interval contained in $(0,1)$, then $T^{-1}(J)$ is union of two disjoint intervals $J_1 \subset (0,\frac{1}{2})$
and $J_2 \subset (\frac{1}{2},1)$, with each mapping onto $J$.
If $T_1 : [0,1] \to [0,1]$ is a continuous map close to $T$, then $T_1(J_1)$ and $T_1(J_2)$ are each intervals close to $J$. They may not
be equal but they will still overlap and so $T_1$ will not be injective.

One can perform this folding procedure for the Cantor set as well. In addition, the tent map is measure preserving with respect to the Lebesgue measure
$\lm$ on $[0,1]$, i.e. $\lm(T^{-1}(E)) = \lm(E)$ for every Borel subset of $[0,1]$. As we will see, in each of these categories, $T$ can be approximated arbitrarily
closely by a suitable invertible map.

We will consider four different categories. \vspace{.25cm}
\begin{itemize}
\item[Section 2:] The objects are Cantor spaces and the morphisms are surjective continuous maps, with the topology of uniform convergence on the
space of maps.

\item[Section 3:] The objects are finite or $\s$-finite measure spaces obtained from complete,
separable metric spaces and the morphisms are measure preserving measurable maps.  The topology is that of pointwise convergence on the Borel sets.

\item[Section 4:] The objects are finite measure spaces obtained from complete,
separable metric spaces and the morphisms are nonsingular measurable maps, using the representation by isometries on the associated $L^1$ spaces.

\item[Section 5:] The objects are again Cantor spaces but equipped with certain special ''good'' measures. The morphisms are continuous, measure preserving maps
and the topology is that of uniform convergence.
\end{itemize}

In each case, we show that the invertible maps, i.e. the isomorphisms in the category, comprise a dense $G_{\d}$ subset of the space of maps.  In each case,
the space of maps is a complete metric space and so the Baire Category Theorem applies.

The argument in each case consists of three steps. We illustrate them with the Cantor set case.\vspace{.25cm}

\begin{itemize}
\item[Uniqueness:]  Any two Cantor spaces are homeomorphic and so, in particular, any two nonempty clopen (=simultaneously closed and open)
subsets of Cantor spaces are homeomorphic.

\item[$G_{\d}$:] In the space of surjective continuous maps between Cantor spaces, the invertible maps comprise a $G_{\d}$ subset.

\item[Density:]  In the space of surjective continuous maps between Cantor spaces, the invertible maps form a dense subset.
\end{itemize}

In the other categories, these same three steps of Uniqueness, $G_{\d}$, and density occur in the appropriate, albeit more complicated, ways.\vspace{.5cm}

\textbf{Notes:} The importance of those dense sets which are $G_{\d}$ comes from the \emph{Baire Category Theorem} which says that in a completely
metrizable space, e.g. a compact metric space, the countable intersection of  dense open sets is dense and so the countable intersection of
dense $G_{\d}$ sets is dense. By contrast, the two sets which are merely dense may have an empty intersection, e.g. the rationals and the irrationals in
the real line. For a subset $A$ of a completely metrizable space $X$ we say that $A$ is \emph{residual} when it contains a  $G_{\d}$ set dense in $X$.
We then say that the members of $A$ are \emph{generic} in $X$. The complement of a residual set is called \emph{meagre}. Thus, the generic real number is irrational.

In a measure space there is an alternate notion of genericity. A subset is said to have \emph{full measure} when its complement has measure zero. The countable
intersection of subsets of full measure again has full measure. If the measure space has a topology such that every nonempty open subset is measurable with positive
measure, then the measure is called \emph{full} and a subset with full measure is dense.

Unfortunately, these two notions conflict. For example, on the unit interval $I$ Lebesgue measure is full. While the classical Cantor set has measure zero,
there exists for every positive integer $n$, a Cantor subset $C_n$ of $I$ with measure $1 - \frac{1}{n}$, obtained by removing ''middle third divided by $n$''
intervals. Since no $C_n$ contains an interval of positive length, the complement of each is dense.  The union of the $C_n$'s is then a meagre subset with
full measure.

A classical, and less artificial, example is the set of \emph{normal numbers} in $I$. A number is normal when every sequence of $n$ digits occurs with
frequency $1/10^n$ in its decimal expansion.  That the set of normal numbers has full measure was proved by Emile Borel in 1909 \cite{Bo}.
However, it is a meagre subset of $I$, see, e.g. \cite{O}.

In a recent paper, \cite{Bob}, Bobok et al. consider the space of continuous maps on the unit interval which preserve Lebesgue measure. These maps are far
from invertible. In fact, the only homeomorphisms of the interval
which preserve Lebesgue measure are the identity $id_I$ and $1 - id_I$. Among other properties the authors
show that for  a generic such map $T$, the point-inverse $T^{-1}(t)$ contains a perfect subset (and so is uncountable) for every
$t \in (0,1)$, i.e. for all points except the end-points.
 \vspace{.5cm}

\section{ \textbf{Cantor Spaces}}\vspace{.25cm}

When you first meet the classical Cantor set obtained by removing successively the ``middle thirds'', the points you see are the endpoints of the
open intervals which were removed. There are countably many such points.  One then observes that the points of the Cantor set are those points of the
unit interval who have a ternary expansion consisting only of $0'$s and $2$'s, no $1$'s, and note that this expansion is unique. This shows that the set is
uncountable.

Thus, in addition to the obvious endpoints, there are uncountably many hidden points.  The distinction between these two sorts of points makes it hard to
credit that the Cantor set is homogeneous, i.e. for any two points of the Cantor set there exists a homeomorphism of the set taking one point to the other. However,
the distinction between the two sorts of points is an artifact of the order structure inherited from the interval.  When you consider the ternary expansions
and replace the $2$'s by $1$'s you map the Cantor set  to the countable product $\{0, 1 \}^{\N}$.  With the product
topology this a compact metrizable space and the map is a homeomorphism. On the other hand $\{0, 1 \}$, regarded as the group of
integers mod $2$, is an additive group and with the product topology $\{0, 1 \}^{\N}$ becomes a compact topological group
via coordinate-wise addition. Such a group is always homogeneous.
The map $x \mapsto x + (b - a)$ is a homeomorphism taking $a$ to $b$. \vspace{.25cm}

\begin{df}\label{df2.01} A \emph{Cantor space} $X$ is a topological space satisfying the following properties.
\begin{itemize}
\item $X$ is compact and metrizable.

\item $X$ is zero-dimensional, i.e. the clopen sets form a basis for the topology.

\item $X$ is perfect, i.e. there are no isolated points, or, equivalently, every non-empty open subset is infinite.
\end{itemize}\end{df}\vspace{.25cm}

Recall that  for a compact, metrizable space all admissible metrics are uniformly equivalent. Taking the union over $n$
of finite covers by open sets of diameter less than $1/n$ we obtain a countable basis and so see that a compact metrizable space is separable.
It then follows that in a
compact, metrizable space the collection of
clopen subsets is countable since each is a finite union of elements of the basis. A compact, metrizable space is zero-dimensional if and only if it is
totally disconnected, i.e. the only connected subsets are singletons. Notice that with the relative topology any nonempty clopen
subset of a Cantor space satisfies the above properties and so is itself a Cantor space.

We will use $C = \{ 0. 1 \}^{\N}$ as our standard Cantor space. It clearly satisfies the above conditions as does the classical middle thirds
Cantor set in the unit interval. An element
$z \in C$ is an infinite string of $0$'s and $1$'s. With $[n] = \{ 1,2, \dots, n \}$, the set $\{ 0, 1 \}^{[n]}$ consists of the $2^n$ $n$-strings,
i.e. strings of length $n$. Given a string $z$ which is infinite or of length at least $m$, we call the restriction $z|[m]$ the initial $m$-string of
$z$. Given an $m$-string $w$ the set $C_w = \{ z \in C : z|[m] = w \}$ is a clopen set and the collection of $C_w$'s as $w$ varies over all finite
strings provides $C$ with a basis of clopen sets.

A \emph{partition} $\P$ of a set $X$ is a  pairwise disjoint collection of nonempty subsets which covers $X$. For partitions $\P_1$ and $\P_2$ of
$X$, $\P_1$ \emph{refines} $\P_2$, written $\P_1 \le \P_2$, when every element of $\P_1$ is contained in a, necessarily unique, element of $\P_2$,
or, equivalently, when every element of $\P_2$ is a union of elements of $\P_1$. A  sequence of partitions $\{ \P_n \}$
is \emph{monotone} when $m \ge n$ implies $\P_m \le \P_n$. By transitivity of the refinement relation it suffices $\P_{n+1}$ refines $\P_n$ for all $n$.
For partitions $\P_1$ and $\P_2$, the \emph{common refinement} is given by
\begin{equation}\label{eq2.01a}
\P_1 \wedge \P_2 = \{ U_1 \cap U_2 : U_1 \in \P_1, U_2 \in \P_2 \} \setminus \{ \emptyset \}.
\end{equation}

 If $X$ is a compact metric space, then $\P$ is a \emph{clopen partition}
when the elements are clopen subsets,  so $\P$ is finite, and the \emph{ mesh} of the partition $\P$,
written $|\P|$, is the maximum of the diameters of the elements of $\P$.

\begin{prop}\label{prop2.02}(\textbf{Uniqueness}) Any two Cantor spaces are homeomorphic. \end{prop}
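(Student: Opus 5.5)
The plan is to show that every Cantor space $X$ is homeomorphic to the standard space $C = \{0,1\}^{\N}$; composing two such homeomorphisms then gives the result. The homeomorphism will come from a monotone sequence of clopen partitions $\{\P_n\}$ of $X$ with mesh tending to zero, where each element of $\P_n$ is split into exactly two elements of $\P_{n+1}$. Index the elements of $\P_n$ by $n$-strings, $\P_n = \{ U_w : w \in \{0,1\}^{[n]} \}$, so that $U_{w0}$ and $U_{w1}$ partition $U_w$. Define $h : C \to X$ by sending $z$ to the unique point of $\bigcap_n U_{z|[n]}$. This intersection is a nested sequence of nonempty compact sets, so it is nonempty, and since the diameters tend to zero it is a single point. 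The map $h$ is continuous because $h(C_w) \subset U_w$ and $|\P_n| \to 0$. It is injective because distinct strings eventually lie in disjoint cells, and surjective because each $x \in X$ lies in a unique cell of each $\P_n$, which gives a consistent infinite string. A continuous bijection from a compact space to a Hausdorff space is a homeomorphism.

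The main work is building the binary-splitting partitions. First I construct any monotone sequence of clopen partitions $\mathcal Q_n$ with $|\mathcal Q_n| \to 0$, and with each cell of $\mathcal Q_{n+1}$ strictly splitting its parent into at least two pieces. Given $\mathcal Q_n$, cover $X$ by finitely many clopen sets of diameter $<1/(n+1)$, which is possible by zero-dimensionality and compactness. Disjointify them in the usual way, $V_i \setminus \bigcup_{j<i} V_j$, and intersect with $\mathcal Q_n$. Any cell left unsplit can still be divided into two nonempty clopen pieces, because it is infinite (perfectness) and Hausdorff points are separated by clopen sets.

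The obstacle is that $\mathcal Q_{n+1}$ splits each cell into $k \ge 2$ pieces rather than exactly two, and $k$ varies from cell to cell. I fix this by inserting intermediate levels. A cell $U$ split into pieces $A_1, \dots, A_k$ is first split as $A_1$ and $A_2 \cup \dots \cup A_k$, then the second part is split again, and so on. Since cells split a varying number of times, the levels must be kept synchronized. Where a cell has already reached its final pieces but other cells still need splitting, I split that piece arbitrarily into two nonempty clopen sets, again using perfectness, and carry on. This only refines further, so it is harmless. Alternatively, I can first refine so that every cell of $\mathcal Q_{n+1}$ splits into exactly $2^{m}$ pieces for a common $m$, by padding with extra arbitrary binary splits. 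Then I interpolate $m$ binary levels in a balanced way.

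The resulting sequence is monotone and binary, and its mesh still tends to zero since it is interleaved with, and refines, the $\mathcal Q_n$. Relabelling its cells by finite binary strings completes the construction.
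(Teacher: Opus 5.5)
Your proof is correct and is essentially the paper's argument: a monotone sequence of clopen partitions with mesh tending to zero, labelled by binary strings compatibly with inclusion, with points of $X$ matched to nested intersections of cells. The paper maps $X \to C$ rather than $C \to X$, and it avoids your interpolation of binary levels by letting each cell of $\P_n$ split into $2^{k_n}$ pieces labelled by strings of length $m_{n+1} = m_n + k_n$; this is your second alternative (padding to a common power of two using perfectness) without inserting the intermediate levels.
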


\begin{proof} As this result is well-known we will just sketch the proof.

Given a Cantor space $X$ we build homeomorphism from $X$ onto our standard space $C$
by constructing inductively a monotone sequence $\{\P_n \}$ of clopen partitions on $X$,
an increasing sequence $\{ m_n \} $ of positive integers and a labelling bijection
$w : \P_n \to \{ 0, 1\}^{[m_n]}$ such that if $V \in \P_{n+1}$ is contained in $ U \in \P_n$, then the label of $U$ is an initial string of the label of $V$. Thus,
\begin{equation}\label{eq2.01}
V \subset U \qquad \Longleftrightarrow \qquad w(U) = w(V)|[m_n].
\end{equation}
In addition,
\begin{equation}\label{eq2.02}
|\P_n| \le 1/n.
\end{equation}

For the inductive step, we can choose $k_n$ large enough that every $U \in \P_n$ can be partitioned by $2^{k_n}$ clopen sets each of diameter
less than $1/(n+1)$. Put these together to define $\P_{n+1}$.
 Let $m_{n+1} = m_n + k_n$. Label the elements of $\P_{n+1}$ which are contained in $U$
using those $m_{n+1}$-strings whose initial $m_n$-strings equal $w(U)$. %Thus, (\ref{eq2.01}) holds.

For each point $x \in X$, there is a unique sequence $\{ U_n \in \P_n \}$ with $x \in U_n$. Because the sequence
$\{ \P_n \}$ is monotone, $U_{n+1} \subset U_n$. Thus, there exists a unique $w(x) \in C$ such that $w(x)|[m_n] = w(U_n)$.
On the other hand, if $z \in C$, there is a sequence $\{ U_n \in \P_n \}$ with $z|[m_n] = w(U_n)$. From (\ref{eq2.01})
it follows that $U_{n+1} \subset U_n$. The decreasing sequence $\{ U_n \}$ of nonempty compacta has a nonempty intersection
and (\ref{eq2.02}) implies that the intersection is a singleton $x$.  Clearly, $w(x) = z$. Thus, the map $x \mapsto w(x)$ is a bijection
from $X$ to $C$. From
 \begin{equation}\label{eq2.03}
 x \in U_n \qquad \Longleftrightarrow \qquad w(x)|[m_n] = w(U_n),
 \end{equation}
 it follows that the map is a homeomorphism.

 \end{proof}\vspace{.25cm}

 For our next result, assume that $T : X \to Y$ is a set map, if $A \subset X, B \subset Y$ with $U = X \setminus A, V = Y \setminus  B$, then,
 \begin{equation}\label{eq2.04}\begin{split}
T(A) \cap B = \emptyset  \quad \Longleftrightarrow \quad  T(A) \subset V \quad \Longleftrightarrow \quad A \subset T^{-1}(V)\\
 \Longleftrightarrow \quad A \cap T^{-1}(B) = \emptyset \quad \Longleftrightarrow \quad T^{-1}(B) \subset U.\hspace{1cm}
 \end{split}\end{equation}

 For compact metrizable spaces $X$ and $Y$ we let $\CC(X,Y)$ be the set of continuous maps from $X$ to $Y$. We denote by $\CC_{sur}(X,Y)$ and by
 $\CC_{iso}(X,Y)$  the subsets of surjective maps and bijective maps, respectively. By compactness, a bijective continuous map is a homeomorphism.
 Given a metric $d$ on $Y$, we use the sup metric $d$ on $\CC(X,Y)$ given by
 \begin{equation}\label{eq2.05}
 d(T_1, T_2) = \sup \{ d(T_1(x),T_2(x)) : x \in X \}.
 \end{equation}
 The associated topology is that of uniform convergence and with this metric $\CC(X,Y)$ is a complete metric space.
 With $U$ and $V$ varying over countable bases in $X$ and $Y$, respectively,
 the collection \\
 $\{ T : T(\ol{U}) \subset V \}$
 provides a countable subbase for $\CC(X,Y)$.  Thus, $\CC(X,Y)$ is separable as well.\vspace{.25cm}

\begin{prop}\label{prop2.03}($\mathbf{G_{\d}}$) For compact metrizable spaces $X$ and $Y,$ \\ $\CC_{sur}(X,Y)$ is a closed subset of
$\CC(X,Y)$ and $\CC_{iso}(X,Y)$ is a $G_{\delta}$ subset of $\CC(X,Y)$. \end{prop}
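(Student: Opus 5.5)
To show $\CC_{sur}(X,Y)$ is closed, take $T_n \to T$ uniformly with each $T_n$ surjective, and fix $y \in Y$. Choose $x_n$ with $T_n(x_n) = y$. By compactness of $X$, pass to a subsequence with $x_n \to x$. Then
\[
d(T(x),y) \le d(T(x),T(x_n)) + d(T(x_n),T_n(x_n)),
\]
and both terms tend to $0$: the first by continuity of $T$, the second because it is bounded by $d(T,T_n)$. Hence $T(x) = y$, so $T$ is surjective.

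For the $G_{\d}$ statement, write $\CC_{iso}(X,Y) = \CC_{sur}(X,Y) \cap \CC_{inj}(X,Y)$, where $\CC_{inj}$ denotes the injective maps. A closed set in a metric space is a $G_{\d}$, so it suffices to show that $\CC_{inj}(X,Y)$ is a $G_{\d}$.

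Fix a metric $d$ on $X$ and, for $k \in \N$, let $K_k = \{ (x_1,x_2) \in X \times X : d(x_1,x_2) \ge 1/k \}$, which is compact. Define
\[
G_k = \{ T \in \CC(X,Y) : T(x_1) \ne T(x_2) \ \text{for all} \ (x_1,x_2) \in K_k \}.
\]
Clearly $\CC_{inj}(X,Y) = \bigcap_k G_k$, since two distinct points lie in some $K_k$. The key step is that each $G_k$ is open. For $T \in G_k$, the continuous function $(x_1,x_2) \mapsto d(T(x_1),T(x_2))$ is positive on the compact set $K_k$, so it has a positive minimum $\ep$. If $d(T',T) < \ep/2$, then by the triangle inequality $d(T'(x_1),T'(x_2)) > 0$ on $K_k$, so $T' \in G_k$.

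Alternatively, one could use the paper's subbase viewpoint: for disjoint closed sets $A, B \subset X$, the set $\{T : T(A) \cap T(B) = \emptyset\}$ is open, and one intersects over the countably many pairs of disjoint basic closed sets. The compactness argument above is more direct.

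There is no real obstacle here. The only point needing care is the uniform positive lower bound $\ep$, which comes from the compactness of $K_k$, together with extracting a convergent subsequence in the surjectivity argument.
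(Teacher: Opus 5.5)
Your proof is correct and has the same structure as the paper's: your $G_k$ is exactly the paper's open set $\{T : (T\times T)^{-1}(\Delta_Y) \subset V_{1/k}\}$, and $\CC_{iso}$ is the intersection of the closed set $\CC_{sur}$ with $\bigcap_k G_k$. The differences are only in how the individual steps are verified. The paper gets openness of $G_k$ from the general fact that $\{T : T(A)\cap B=\emptyset\}$ is open for closed $A,B$ (applied to $T\times T$, $X\times X\setminus V_{1/k}$ and $\Delta_Y$), and it proves $\CC_{sur}$ closed by writing its complement as a union of the open sets $\{T : T(X)\subset V\}$ over proper open $V$, whereas you use a direct minimum on the compact set $K_k$ and a subsequence argument.
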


\begin{proof} If $A, B$ are closed subsets of $X$ and $Y$ respectively with open complements $U$ and $V$ then the compact sets
 $T(A)$ and $B$ are  a positive distance apart if they are disjoint.  So $\{ T\in \CC(X,Y) : T(A) \cap B = \emptyset \}$ is
 an open subset of $\CC(X,Y)$.

 For any proper open subset $V$ of Y, it follows from (\ref{eq2.04})that $\{ T \in \CC : T(X) \subset V \}$ is an open set and taking the union over
 all such proper open sets $V$, we obtain the open set $\CC \setminus \CC_{sur}$.  Hence, $\CC_{sur}$ is closed.

 Clearly, $T \mapsto T \times T$ is a continuous map from $\CC(X,Y)$ to $\CC(X \times X,\\ Y \times Y)$. With the metric $d$ on $X$, the
 set $V_{1/n} = \{ (x_1,x_2) : d(x_1,x_2) < 1/n \}$ is open in $X \times X$. With $\Delta_Y   = \{ (y,y)  \}$, the closed diagonal in $Y,$ it
 follows from (\ref{eq2.04})that $\{ T \in \CC :(T \times T)^{-1}(\Delta_Y) \subset V_{1/n} \}$ is open. Intersecting over $n$ we see that the
 set of injective maps is $G_{\d}$. Since a closed set in a metric space is always $G_{\d}$, it follows that $\CC_{iso}$ is $G_{\d}$.

 \end{proof}\vspace{.25cm}

\begin{prop}\label{prop2.04}(\textbf{Density}) Let $T : X \to Y$ be a surjective, continuous map with $X$ and $Y$ Cantor spaces.  For $d$ an admissible
metric on $Y$ and $\ep > 0$, there exists a homeomorphism $T_1$ of $X$ onto $Y$ such that
 \begin{equation}\label{eq2.06}
 x \in X \qquad \Longrightarrow \qquad d(T(x),T_1(x)) \le \ep.
 \end{equation}. \end{prop}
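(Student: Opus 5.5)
We will build $T_1$ piece by piece: first partition $Y$ into small clopen sets, pull the partition back to $X$ along $T$, and then match the pieces by homeomorphisms using Proposition \ref{prop2.02}.

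\textbf{Step 1 (partition of $Y$).} Since $Y$ is compact and zero-dimensional, we can choose a clopen partition $\mathcal{Q} = \{V_1, \dots, V_k\}$ of $Y$ with mesh $|\mathcal{Q}| \le \ep$. We may assume every $V_i$ is nonempty.

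\textbf{Step 2 (pull back to $X$).} Put $U_i = T^{-1}(V_i)$. By continuity each $U_i$ is clopen in $X$. Since $T$ is surjective and $V_i \neq \emptyset$, each $U_i$ is nonempty. The sets $U_1, \dots, U_k$ are pairwise disjoint and cover $X$, so they form a clopen partition of $X$. As the paper observes, a nonempty clopen subset of a Cantor space is again a Cantor space, so every $U_i$ and every $V_i$ is a Cantor space.

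\textbf{Step 3 (match the pieces).} By Proposition \ref{prop2.02}, for each $i$ there is a homeomorphism $h_i : U_i \to V_i$. Define $T_1 : X \to Y$ by $T_1|U_i = h_i$.

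\begin{itemize}
\item $T_1$ is continuous, because the $U_i$ form a finite clopen partition and each $h_i$ is continuous.
\item $T_1$ is a bijection, because the $h_i$ are bijections between corresponding blocks of two partitions. By compactness it is therefore a homeomorphism.
\item If $x \in U_i$, then both $T(x)$ and $T_1(x)$ lie in $V_i$, which has diameter at most $\ep$. Hence $d(T(x), T_1(x)) \le \ep$, which is (\ref{eq2.06}).
\end{itemize}

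\textbf{Main obstacle.} The only delicate point is ensuring that the preimages $U_i$ are nonempty and perfect, so that the uniqueness result applies to each pair $(U_i, V_i)$. Nonemptiness comes from the surjectivity of $T$. Perfectness and the other Cantor-space properties come from the fact that each $U_i$ is clopen in a Cantor space. Everything else in the argument is routine.
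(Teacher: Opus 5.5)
Your proof is correct and matches the paper's argument step for step. You take a clopen partition of $Y$ of mesh at most $\ep$, pull it back along the surjection $T$ to a clopen partition of $X$ by nonempty Cantor spaces, match corresponding pieces by homeomorphisms from Proposition~\ref{prop2.02}, and concatenate them into $T_1$.
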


 \begin{proof} Let $\P = \{U_1, \dots, U_n \}$ be a clopen partition of $Y$ with mesh at most $\ep$. Since $T$ is surjective and
 each $U_i$ is nonempty, it follows that each $T^{-1}(U_i)$ is a nonempty clopen subset of $X$ and so
 $ \{ T^{-1}(U_1), \dots, T^{-1}(U_n) \}$ is a clopen partition of $X$.
 Each $U_i$ and each $T^{-1}(U_i)$ is a Cantor space and so  Proposition \ref{prop2.02} implies we can
 choose a homeomorphism $\hat T_i $ from $ T^{-1}(U_i)$ onto $U_i$. Concatenating we obtain the homeomorphism $T_1$ from $X$ onto $Y$.
 Clearly, for all $x \in X$
 \begin{equation}\label{eq2.07}
 T(x) \in U_i \qquad \Longleftrightarrow \qquad T_1(x) \in U_i.
 \end{equation}
 Since the partition $\P$ has mesh at most $\ep$, (\ref{eq2.06}) follows.

 \end{proof}\vspace{.25cm}

 From Propositions \ref{prop2.03} and \ref{prop2.04} we immediately obtain:\vspace{.25cm}

 \begin{theo}\label{theo2.05} If $X$ and $Y$ are Cantor spaces, then the set of homeomorphisms $\CC_{iso}(X,Y)$ is a $G_{\d}$ subset of
 the completely metrizable space $\CC(X,Y)$ and it is dense in the closed subset $\CC_{sur}(X,Y)$ of surjective maps.
 Thus, the generic surjective map is invertible.\end{theo}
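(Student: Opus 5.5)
The theorem follows by assembling Propositions \ref{prop2.03} and \ref{prop2.04} with the Baire Category Theorem.

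First, the ambient space. With an admissible metric $d$ on $Y$, the sup metric (\ref{eq2.05}) makes $\CC(X,Y)$ a complete metric space, as noted before Proposition \ref{prop2.03}. Hence $\CC(X,Y)$ is completely metrizable. By Proposition \ref{prop2.03}, $\CC_{sur}(X,Y)$ is closed in $\CC(X,Y)$, so it is also complete with the restricted metric. The Baire Category Theorem therefore applies in $\CC_{sur}(X,Y)$.

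Second, the $G_{\d}$ property. Proposition \ref{prop2.03} shows $\CC_{iso}(X,Y)$ is $G_{\d}$ in $\CC(X,Y)$. Write it as $\bigcap_n W_n$ with each $W_n$ open in $\CC(X,Y)$. Every bijection is surjective, so $\CC_{iso}(X,Y) \subset \CC_{sur}(X,Y)$. Hence $\CC_{iso}(X,Y) = \bigcap_n (W_n \cap \CC_{sur}(X,Y))$, which is $G_{\d}$ in the relative topology of $\CC_{sur}(X,Y)$.

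Third, density. Take $T \in \CC_{sur}(X,Y)$ and $\ep > 0$. Proposition \ref{prop2.04} gives a homeomorphism $T_1$ with $d(T,T_1) \le \ep$ in the sup metric, so $\CC_{iso}(X,Y)$ is dense in $\CC_{sur}(X,Y)$. Combining the three steps, $\CC_{iso}(X,Y)$ is a dense $G_{\d}$ subset of the completely metrizable space $\CC_{sur}(X,Y)$. It is therefore residual there, which means the generic surjective map is invertible.

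There is essentially no obstacle, since all the substantive work was done in Propositions \ref{prop2.02}--\ref{prop2.04}. The one point needing care is the topology in which "generic" is meant. The claim concerns the closed subspace $\CC_{sur}(X,Y)$, not all of $\CC(X,Y)$, where the homeomorphisms are not dense because non-surjective maps form an open set. So one must use that a closed subspace of a complete metric space is complete, and that a $G_{\d}$ set contained in a subspace is relatively $G_{\d}$ there.
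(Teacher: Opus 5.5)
Your proposal is correct and matches the paper, which obtains the theorem immediately from Proposition~\ref{prop2.03} (closedness of $\CC_{sur}(X,Y)$ and the $G_{\d}$ property) and Proposition~\ref{prop2.04} (density). Your extra remarks, that a closed subspace of a complete metric space is complete and that $\CC_{iso}(X,Y)$ is relatively $G_{\d}$ in $\CC_{sur}(X,Y)$, simply spell out what the paper leaves implicit.
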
 \vspace{.5cm}

\textbf{ Notes:} The uniqueness of Cantor space is a classical result, first proved by L. E. J. Brouwer \cite{B} in 1909. Metrizability is not redundant.
There exist many distinct examples of compact, separable, first countable, zero-dimensional, perfect spaces.  The \emph{Sorgenfrey Double Arrow} construction yields
a linearly ordered topological space with these properties. In it every point is an endpoint and there are uncountably many clopen subsets. For a discussion
of such spaces and the connected linearly ordered spaces in which they live, see \cite{AH02} and its sequel \cite{AH21}.

For the density results it suffices to consider the case with $Y = X$. As part of a study of the dynamics of surjective self-maps of the Cantor space in
\cite{A16} it is proved that the group of invertible self-maps is a dense $G_{\d}$ subset. In fact, it then
follows from the work of Kechris and Rosenthal \cite{KR} that there
is a single conjugacy class which is a dense $G_{\d}$ subset.  For an explicit description of a member of this class, see \cite{AGW}.
 \vspace{.5cm}

 \section{\textbf{Measure Spaces}}\vspace{.25cm}

 A \emph{measure space} is a triple $(X, \B, \mu)$ with $\B$ a $\s$-algebra of subsets of $X$ and $\mu$ a
 finite or $\s$-finite measure   on the elements of $\B$. We assume $\mu$ is non-trivial, i.e. $\mu(X) > 0$.
 It is \emph{normalized} when $\mu(X) = 1$. An element $A \in \B$ has \emph{full measure} when $X \setminus A$ has
 measure zero. In the finite case, this is equivalent to $\mu(A) = \mu(X)$. In general, we will write $(X,\mu)$
for $(X, \B, \mu)$ with the $\s$-algebra $\B$ understood by context.

 An \emph{atom} $A$ is an element of $\B$ with $\mu(A) > 0$ such that
 \begin{equation}\label{eq3.01}
 B \in \B \  \text{with} \ B \subset A \qquad \Longrightarrow \qquad \mu(B) = \mu(A) \ \text{or} \ \mu(B) = 0.
 \end{equation}
 Since an infinite measure is assumed to be $\s$-finite, it follows that even in that case an atom has finite measure.
 The measure is \emph{nonatomic} when it has no atoms.

 Let $\B'$ be the set of those elements of $\B$ with finite measure. On $\B'$, the measure induces a pseudo-metric $d$ given by
  \begin{equation}\label{eq3.02}
  d(A,B) = \mu(A + B)
  \end{equation}
  where $A + B$ is the Boolean sum $(A \cup B) \setminus (A \cap B)$. The set operations: union, intersection and complementation,
  are continuous with respect this metric. If $\{ A_n \}$ is a decreasing sequence of elements of $\B'$
  then the sequence converges to  $\bigcap_n A_n$. If $\{ A_n \}$ is a increasing sequence of elements of $\B'$,
  then it converges to $\bigcup_n A_n$ if and only if the measure sequence $\{ \mu(A_n) \}$ is bounded.

  The associated \emph{measure algebra} $\M_X$ is the associated metric space. That is, the elements are the equivalence classes of sets in $\B'$ which
  differ by a set of measure zero. The set operations and the measure are well-defined using representatives from the classes.
 % Using either $\B$ or its   completion yields the same measure algebra.
  We denote by $0$ the class of sets of measure zero. In general, we will write $A$ both for an element of
  $\M_X$ and for any element of $\B'$ which represents it.

  The measure space is separable when there is a countable subset $\B_0 \subset \B'$, which we may take to be a subalgebra, such that every
  element of $\B'$ can be arbitrarily closely approximated by an element of $\B_0$.  That is, given $A \in \B'$ and $\ep > 0$, there exists
 $B \in \B_0$ such that $d(A,B) < \ep$.
  This exactly says that the classes of $\B_0$ form a dense subset of the metric space $\M_X$. Thus, the
  measure space is separable exactly when the measure algebra is a separable metric space.
  \vspace{.25cm}

  \begin{prop}\label{prop3.01} For a measure space, the associated measure algebra is a complete metric space. \end{prop}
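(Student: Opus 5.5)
To prove Proposition \ref{prop3.01} we show that every Cauchy sequence in $\M_X$ has a convergent subsequence; a Cauchy sequence with a convergent subsequence converges in any metric space. The natural device is to identify a set $A \in \B'$ with its indicator function $\1_A$. Then
\[
d(A,B) = \mu(A + B) = \int |\1_A - \1_B| \, d\mu = \| \1_A - \1_B \|_{L^1(\mu)}.
\]
So $\M_X$ is isometric to the subset of $L^1(X,\mu)$ consisting of (classes of) indicator functions of finite-measure sets. One could quote completeness of $L^1$ (Riesz--Fischer) and check that this subset is closed. I would instead give the direct argument, which is short and stays within the paper's set-theoretic language.

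Let $\{A_n\}$ be Cauchy in $\M_X$, with representatives in $\B'$. Pass to a subsequence, still written $A_n$, such that $\mu(A_n + A_{n+1}) < 2^{-n}$. Define
\[
B = \limsup_n A_n = \bigcap_{k} \bigcup_{n \ge k} A_n .
\]
The key estimate uses $D_k = \bigcup_{n \ge k} (A_n + A_{n+1})$, for which $\mu(D_k) \le 2^{1-k}$. Every point outside $D_k$ lies either in all $A_n$ with $n \ge k$ or in none of them. Hence
\[
(A_k + B) \subset D_k \quad \text{and} \quad (A_k + A_n) \subset D_k \ \text{for} \ n \ge k .
\]
It follows that $d(A_k, B) \le 2^{1-k} \to 0$.

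It remains to check that $B$ actually represents an element of $\M_X$, that is, that $B \in \B'$. This is the only point needing care, because of the $\s$-finite case. Measurability of $B$ is clear since it is built by countable operations. For finiteness, $B \subset A_1 \cup D_1$, so $\mu(B) \le \mu(A_1) + 1 < \infty$.

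Finally, the operations and the metric are well defined on classes, so the limit is independent of the chosen representatives and subsequence. The whole sequence therefore converges to the class of $B$. No step here looks like a serious obstacle; the main care is the finiteness of $\mu(B)$ in the infinite-measure case and making the Borel–Cantelli-type estimate $A_k + B \subset D_k$ precise.
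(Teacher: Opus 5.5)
Your proof is correct and is essentially the paper's argument. The paper regards $\{1_{A_n}\}$ as fundamental in measure and cites Theorem E of Halmos (Section 22, Chapter IV) to get convergence in measure and an a.e.-convergent subsequence whose limit must be an indicator $1_A$; you carry out that same fast-subsequence argument by hand, using $B=\limsup_n A_n$ and the inclusion $A_k + B \subset D_k$. Your version is self-contained, and it explicitly checks that $\mu(B)<\infty$, so that the limit lies in $\B'$ in the $\s$-finite case, a point the paper leaves implicit.
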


  \begin{proof}  Let $\{ A_n \in \B \}$ represent a Cauchy sequence in $\M$. The sequence of
  characteristic functions $\{ 1_{A_n} \}$ is clearly fundamental in
  measure in the sense of \cite{H50} Section 22 of Chapter IV. By Theorem E of that section the sequence $\{ 1_{A_n} \}$ converges in measure to a
  measurable function $k$ and a subsequence converges almost uniformly, and so pointwise a.e., to the function $k$.  It then follows that $k$ is
  the characteristic function of some set $A$ and that $\{ d(A_n,A) \}$ has limit zero.

  \end{proof}  \vspace{.25cm}

  For $A \in \B_X \ \ \B_A = \{ A_1 \in \B_X : A_1 \subset A \} = \{ A \cap A_1 : A_1 \in \B_X \}$.  Then $(A,  \mu|A)$ is a measure
  subspace of $(X, \mu)$ where $\mu|A(A_1) = \mu(A_1)$ for $A_1 \in \B_A$. The associated measure algebra $\M_A$ is a subspace of $\M_X$.

  For $(X, \mu)$ a \emph{measure space partition}, $\P$, is a finite or  infinite sequence $ \{ A_k \in \B' \}$ such that
  $\mu(A_i \cap A_j) = 0$ when $i \not= j$ and $\bigcup_i A_i $ is a set of full measure. These represent a
 \emph{ measure algebra partition} $ \{ A_i \in \M_X \}$ such that
  $A_i \cap A_j = 0$ when $i \not= j$ and $A = \bigcup_i A \cap A_i$ for all $A \in \M_X$. When $\{ A_i \}$ is a sequence in $\M_X$ such that
  $A = \bigcup_i A \cap A_i$ for all $A \in \M_X$ we will say that $ \{  A_i \} $ is a full sequence in $\M_X$. In the finite case, this just says
   that  $ X = \bigcup_i \ A_i$.
  In either case, the $\mu$-mesh $|\P|$ is the maximum of the measures $\mu(A_i)$.

  For measure spaces $(X,  \mu)$ and $(Y, \nu)$ a mapping $T : X \to Y$ is a \emph{measure space map} when it is a measurable function which
  preserves measure, i.e.
    \begin{equation}\label{eq3.03}
  B \in \B_Y \quad \Longrightarrow \quad T^{-1}(B) \in \B_X \quad \text{with} \ \ \mu(T^{-1}(B)) = \nu(B).
  \end{equation}
  Thus, $T^{-1}(\B'_Y) \subset \B'_X.$

  The map $T$ is a \emph{measure space isomorphism} when there exists a measure space map $\hat T : (Y, \nu) \to (X, \mu)$ such that
  $\hat T \circ T = id_X$ a.e. and $T \circ \hat T = id_Y$ a.e. \vspace{.25cm}

  \begin{prop}\label{prop3.02a} Let $T : (X,  \mu) \to (Y, \nu)$ be a measure space map.

  (a)  If $T : (X,  \mu) \to (Y,  \nu)$ is a measure space isomorphism with inverse $\hat T$ then there exist
  subsets $X_1 \subset X$ and $Y_1 \subset Y$ of full measure such that the restriction
  $T|X_1$ is a bijection onto $Y_1$ with inverse the restriction $\hat T|Y_1$.

  (b) Conversely, if there exist subsets $X_1 \subset X$ and $Y_1 \subset Y$ of full measure
  such that the restriction $T|X_1$ is a bijection onto $Y_1$ with a measurable inverse, then $T$ is a measure space isomorphism. \end{prop}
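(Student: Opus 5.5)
For (a), I will construct $X_1$ and $Y_1$ directly from the two a.e. identities. Let $X_0 = \{ x : \hat T(T(x)) = x \}$ and $Y_0 = \{ y : T(\hat T(y)) = y \}$. By hypothesis these have full measure; if measurability of the diagonal is a concern, I take them to contain measurable sets of full measure, which is all that is needed. Then I set
\[
X_1 = X_0 \cap T^{-1}(Y_0), \qquad Y_1 = Y_0 \cap \hat T^{-1}(X_0).
\]
Since $T$ and $\hat T$ preserve measure, $T^{-1}(Y \setminus Y_0)$ and $\hat T^{-1}(X \setminus X_0)$ are null. Hence $X_1$ and $Y_1$ have full measure. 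In the $\s$-finite case I apply this on each piece of a countable partition of $Y$ into finite-measure sets, so null sets still pull back to null sets.

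Next I check that $T$ maps $X_1$ into $Y_1$. If $x \in X_1$, then $T(x) \in Y_0$, and $\hat T(T(x)) = x \in X_0$, so $T(x) \in \hat T^{-1}(X_0)$. Thus $T(x) \in Y_1$. Symmetrically, if $y \in Y_1$, then $\hat T(y) \in X_0$ and $T(\hat T(y)) = y \in Y_0$, so $\hat T(y) \in X_1$. On these sets $\hat T \circ T = id$ on $X_1 \subset X_0$ and $T \circ \hat T = id$ on $Y_1 \subset Y_0$. Therefore $T|X_1$ is a bijection onto $Y_1$ with inverse $\hat T|Y_1$.

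For (b), let $S : Y_1 \to X_1$ be the measurable inverse. I define $\hat T$ on $Y$ by $\hat T = S$ on $Y_1$ and $\hat T \equiv x_0$ on $Y \setminus Y_1$, where $x_0$ is a fixed point of $X$; this map is measurable. The identities $\hat T \circ T = id$ on $X_1$ and $T \circ \hat T = id$ on $Y_1$ are immediate, so it remains to show that $\hat T$ preserves measure. For $A \in \B_X$ I have $\hat T^{-1}(A) \cap Y_1 = S^{-1}(A \cap X_1) = T(A \cap X_1)$. Since $T$ is a bijection from $X_1$ onto $Y_1$, this gives $T^{-1}(T(A \cap X_1)) \cap X_1 = A \cap X_1$. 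Using that $X \setminus X_1$ and $Y \setminus Y_1$ are null, I obtain
\[
\nu(\hat T^{-1}(A)) = \nu(T(A\cap X_1)) = \mu(T^{-1}(T(A \cap X_1))) = \mu(A\cap X_1) = \mu(A).
\]

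The main obstacle is measurability: that $X_1$ and $Y_1$ are measurable, or contain measurable full-measure subsets, and that $T(A \cap X_1)$ is measurable. The latter is handled by the assumed measurability of the inverse $S$. For the former, if $X_0$ and $Y_0$ are only known to contain full-measure measurable sets, I replace them by those subsets, and the argument goes through unchanged.
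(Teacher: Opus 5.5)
Your proof is correct and follows the paper's argument essentially verbatim: in (a) you use the same sets $X_1 = X_0 \cap T^{-1}(Y_0)$ and $Y_1 = Y_0 \cap \hat T^{-1}(X_0)$, and in (b) you extend the inverse by a constant on the null set $Y \setminus Y_1$ and compute $\nu(\hat T^{-1}(A))$ through the identity $T^{-1}(T(A\cap X_1))\cap X_1 = A\cap X_1$, exactly as the paper does. (Two harmless differences: your $\sigma$-finite detour is unnecessary, since measure preservation already gives $\mu(T^{-1}(B)) = \nu(B) = 0$ for every null $B \in \B_Y$, and taking $x_0$ to be an arbitrary point of $X$ rather than a point of $X_1$ as the paper does changes nothing.)
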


  \begin{proof} (a) Let $X_0 \subset X$ and $Y_0 \subset Y$ be subsets of full measure such that $\hat T \circ T $ is the identity on $X_0$ and
  $T \circ \hat T $ is the identity on $Y_0$. Let $X_1 = X_0 \cap T^{-1}(Y_0)$ and $Y_1 = Y_0 \cap \hat T^{-1}(X_0)$. If $x \in X_1$ then
  $T(x) \in Y_0$ and since $x \in X_0$, $\hat T(T(x)) = x$. Hence, $T(x) \in \hat T^{-1}(X_0)$.  That is, $T(X_1) \subset Y_1$ and similarly,
  $\hat T(Y_1) \subset X_1$. The maps $T : X_1 \to Y_1$ and $\hat T : Y_1 \to X_1$ are clearly inverses.

  (b) Define $\hat T$ to be $(T|X_1)^{-1}$ on $Y_1$ and map $Y \setminus Y_1$ to a point $x$ of $X_1$. Since $\hat T$ maps into $X_1$, we have for
  $A \in \B_X$, $\hat T^{-1}(A) = T(A \cap X_1) $ if $x \not\in A$ or $T(A \cap X_1) \cup (Y \setminus Y_0)$ if $x \in A$. These are measurable
  because $(T|X_1)^{-1}$ is assumed measurable and  $T(A \cap X_1) = ((T|X_1)^{-1})^{-1}(A \cap X_1)$. Since $A \cap X_1 = T^{-1}(T(A \cap X_1)) \cap X_1$
  we have
  \begin{equation}\label{eq3.03a}
  \mu(A) = \mu(A \cap X_1) = \mu(T^{-1}(T(A \cap X_1)) = \nu(T(A \cap X_1)) = \nu(\hat T^{-1}(A)).
  \end{equation}
  Hence, $\hat T : (Y,  \nu) \to (X,  \mu)$ is a measure space map. Since $\hat T \circ T$ is the identity on $X_1$ and $T \circ \hat T$ is the
  identity on $Y_1$, $\hat T$ is the inverse of $T$.

  \end{proof}\vspace{.25cm}

  In particular, if $A_1 \subset A \in \B$ with $\mu(A \setminus A_1) =  0$, then the inclusion map from $A_1$ to $A$ is a measure space isomorphism.

  Note that a measurable bijection need not have a measurable inverse. For example, if $\B_0$ is a $\s$-algebra  of subsets of $X$
  which is properly contained in the $\s$-algebra $\B$, then the identity map on $X$ is measurable from $\B$ to $\B_0$ but the inverse is not.
 However, if a  bijection preserves measures and has a measurable inverse, then the inverse clearly preserves measures as well.

    If $T : (X,  \mu) \to (Y, \nu)$ is a measure space map, then the map from $\B'_Y$ to $\B'_X$ given by $B \mapsto T^{-1}(B)$
    induces a map $T^* : \M_Y \to \M_X$, which is an example of a measure algebra map. \vspace{.25cm}

    \begin{df}\label{df3.02} For measure algebras $\M_X$ and $\M_Y$ with $\mu(X) = \nu(Y)$ a mapping $S : \M_Y \to \M_X$ is
    a \emph{measure algebra map} when it satisfies:
    \begin{itemize}

    \item[(i)] $S(B_1 \cup B_2) = S(B_1) \cup S(B_2)$ and $S(B_1 \setminus B_2) = S(B_1) \setminus S(B_2)$ for all $B_1, B_2 \in \M_Y$.

     \item[(ii)]$\mu(S(B)) = \nu(B)$ for all $B \in \M_Y$.

    \item[(iii)]If $\{ B_i \}$ is a full sequence in $\M_Y$, then $\{ S(B_i) \}$ is a full sequence in $\M_X$.
    \end{itemize}
    \end{df}\vspace{.25cm}
    It follows that $S(0) = 0$ and in the finite case $S(Y) = X$. Thus, in the finite case  condition (iii) is redundant. Since
    $B_1 \setminus B_2 = (B_1 \cup B_2) \setminus B_2$, it suffices in (i) that differences be preserved when $B_2 \subset B_1$.

    Since $B_1 + B_2 = (B_1 \setminus B_2) \cup (B_2 \setminus B_1)$ and
    $B_1 \cap B_2 = (B_1 \cup B_2) \setminus (B_1 + B_2)$, it follows that $S$  preserves
    Boolean sums and intersections. Hence, $S$ preserves the metrics as well. That is, $S : \M_Y \to \M_X$ is a
    metric space isometry.  In particular, a measure algebra map is always injective.  When it is surjective as well, then the inverse mapping
    is clearly a measure algebra map from $\M_X$ to $\M_Y$ and so $S$ is then a measure algebra isomorphism.

    The arguments that follow will all be at the measure algebra level.  At the end of the section, we will describe the results needed to lift
    the arguments to measure space conclusions.

    Our standard example of a normalized, finite, separable measure space will be the unit interval $I = [0,1]$ with Lebesgue measure $\lm$ on $\B_I$
    the Borel sets in $I$ and measure algebra $\M_I$. The measure space $(I,\lm)$ is separable and nonatomic.

  \begin{prop}\label{prop3.03}(\textbf{Uniqueness}) Any two normalized, separable,\\ nonatomic measure algebras are isomorphic. \end{prop}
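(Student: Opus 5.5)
The plan mirrors the proof of Proposition \ref{prop2.02}: we show that every normalized, separable, nonatomic measure algebra $\M_X$ is isomorphic to the standard algebra $\M_I$. We build refining sequences of partitions on both sides, matched element by element, and pass to the limit using completeness (Proposition \ref{prop3.01}). The basic tool is the \emph{intermediate value property} of a nonatomic measure: if $\mu(A) > 0$ and $0 \le t \le \mu(A)$, then there is $B \subset A$ with $\mu(B) = t$.

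First I would prove this intermediate value property. Nonatomicity gives, for every $\delta > 0$, a subset of $A$ with positive measure less than $\delta$: repeatedly split a set of positive measure into two pieces of positive measure and keep the smaller one. A greedy transfinite-free exhaustion then produces the subset of measure exactly $t$. At stage $k$, add to the current set a subset of what remains whose measure is at least half the supremum of the measures that can still be added without exceeding $t$. The union of these additions has measure at most $t$. If it were strictly less than $t$, the small-subset lemma would let us add a further piece, contradicting the fact that the available suprema tend to zero.

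Next fix a countable dense subalgebra $\B_0 = \{E_1, E_2, \dots\}$ of $\M_X$. Inductively construct finite partitions $\P_n$ of $X$ and $\mathcal{Q}_n$ of $I$ (where $\mathcal{Q}_n$ consists of intervals) together with bijections $\phi_n : \P_n \to \mathcal{Q}_n$. We require that $\lm(\phi_n(A)) = \mu(A)$, that $\P_{n+1}$ refines $\P_n$ and $\mathcal{Q}_{n+1}$ refines $\mathcal{Q}_n$ compatibly with the $\phi$'s, that $\P_n$ refines the partition generated by $E_1, \dots, E_n$, and that the $\mu$-mesh of $\P_n$ and the $\lm$-mesh of $\mathcal{Q}_n$ are both at most $1/n$. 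On the $X$ side, we first intersect with $E_n$ and its complement. Then, using the intermediate value property, we cut each piece into sets of measure at most $1/n$. On the $I$ side, we cut each corresponding interval into consecutive subintervals of exactly the same measures, which is always possible for Lebesgue measure.

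Let $\mathcal{A}$ be the union of the finite algebras generated by the $\P_n$. The maps $\phi_n$ combine into a measure-preserving Boolean homomorphism $S$ from $\mathcal{A}$ onto the algebra $\mathcal{A}'$ generated by the $\mathcal{Q}_n$. Because it preserves measure, it preserves the metric $d$, so it is an isometry. Now $\mathcal{A}$ contains $\B_0$ and is therefore dense in $\M_X$. Also $\mathcal{A}'$ contains intervals of mesh tending to zero whose endpoints are dense, so it is dense in $\M_I$. By completeness (Proposition \ref{prop3.01}), $S$ extends uniquely to an isometry of $\M_X$ onto $\M_I$. The operations of union and difference are uniformly continuous, so they are still preserved in the limit, and measure is preserved as well. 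Hence the extension is a measure algebra map in the sense of Definition \ref{df3.02}. It is surjective, so it is a measure algebra isomorphism. Composing one such isomorphism with the inverse of another gives the proposition.

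The step I expect to need the most care is the exact intermediate value property, specifically the exhaustion argument that reaches the value $t$ exactly rather than approximately. The fine mesh on the $X$ side depends on it, and the fine mesh is what makes the interval side dense.
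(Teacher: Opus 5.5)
Your proposal is correct and follows essentially the same route as the paper: refining partitions of $\M_X$ with mesh at most $1/n$ that absorb a dense sequence, matched with consecutive interval partitions of $I$ of equal measures, an isometric extension from the generated algebra via completeness, and surjectivity from the density of the interval algebra. The only addition is your explicit proof of the exact intermediate value property for nonatomic measures, which the paper simply invokes when it says that nonatomicity allows a refinement of small mesh.
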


  \begin{proof} It suffices to show that a normalized, separable, nonatomic  measure algebra $\M_X$ is isomorphic to $\M_I$. Let $\{E_1, E_2, \dots \}$
  be a dense sequence in $\M_X$. We construct a monotone sequence of measure algebra partitions $\{ \P_n \}$ and maps $S : \P_n \to \B_I$
  such that
  \begin{itemize}
  \item The $\mu$-mesh $|\P_n|$ is at most $1/n$ and $E_n$ is a union of elements of $\P_n$.
  \item The set $S(\P_n)  = \{ S(A) : A \in \P_n \}$ is a partition of $I$ by closed intervals with $\lm(S(A)) = \mu(A)$.
  \item If $A \in \P_n, B \in \P_{n+1}$ with $B \subset A$, then $S(B) \subset S(A)$.
  \end{itemize}
  Thus,  $ \{ S(\P_n)  \} $ is a monotone sequence of partitions of $\M_I$ with $|S(\P_n)| = |\P_n|$.

  We begin the inductive construction with $\P_1 = \{ E_1, X \setminus E_1 \}$. Let $S(E_1)$ be the interval $[0,\mu(E_1)]$ and let
  $S(X \setminus E_1)$ be the interval $[\mu(E_1),1]$. Since $\M_X$ is normalized, $\mu(X \setminus E_1) = 1 - \mu(E_1)$.

  Now assume that $\P_k$ and $S : \P_k \to \B_I$ have been constructed for $k \le n$. Because $\M_X$ is nonatomic, we can choose a
  refinement $\P_{n+1}$ of the common refinement $\P_n \wedge \{ E_{n+1}, X \setminus E_{n+1} \}$ such that $|\P_{n+1}| \le 1/(n+1)$.
  Given $A \in \P_n$,  $\P_{n+1}$ contains a partition $\{ B_1, \dots,B_k \}$ of $A$. Now $S(A)$ is an interval $[a,b]$ of length $\lm(S(A)) = \mu(A)$.
  Choose $S(B_1) = [a,a+\mu(B_1)], S(B_2) = [a+\mu(B_1),a+\mu(B_1)+\mu(B_2)],$ etc to obtain a partition of $[a,b]$ by closed intervals. Letting
  $A$ vary over $\P_n$ we obtain the partition $\P_{n+1}$.   This completes the inductive construction.

  Let $\B_0$ be the subalgebra of $\B'_X$ generated by the representatives of $\bigcup_n \{ \P_n \}$ and let $\M_0$ be the associated
  subset of $\M_X$. If
  $A \in \M_0$ then for some $n$ $A$ is a union of elements of $\P_n$
  and we let $S(A)$ be the union of the corresponding intervals.  Since the sequences of partitions are monotone, the definition of $S(A)$ is
  independent of the choice of large enough $n$. Thus, we obtain a measure algebra map $S : \M_0 \to \M_I$. As $S$ is an isometry and $\M_0$ is
  dense in $\M_X$, $S$ extends to an isometry from $\M_X$ to the complete metric space $\M_I$. From continuity of the algebraic operations it follows
  that $S$ is a measure algebra map on all $\M_X$. Finally, since the mesh of the $S(\P_n)$ tend to zero, every interval in $I$ can be arbitrary closely
  approximated by a union of intervals in $\bigcup_n \{ S(\P_n) \}$. Thus, $S(\M_0)$ is dense in $\M_I$ and so $S : \M_X \to \M_I$ is surjective and is
  thus an isomorphism.

  \end{proof} \vspace{.25cm}

    Our standard example of an infinite, separable, nonatomic measure space will be
    the  real line $\R$ with Lebesgue measure $\lm$ on $\B_{\R}$
    the Borel sets in $\R$ and measure algebra $\M_{\R}$. The measure space $(\R,\lm)$ is separable and nonatomic. \vspace{.25cm}

  \begin{prop}\label{prop3.03y}(\textbf{Uniqueness}) Any two infinite, separable,\\ nonatomic measure algebras are isomorphic. \end{prop}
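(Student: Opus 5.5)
The plan is to reduce to the finite case, Proposition \ref{prop3.03}, by cutting both algebras into countably many pieces of measure $1$. Let $\M_X$ be infinite, separable and nonatomic, coming from a $\s$-finite measure space $(X,\mu)$. It suffices to show that $\M_X$ is isomorphic to $\M_{\R}$.

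\textbf{Step 1: partition $X$ into unit pieces.} By $\s$-finiteness, write $X$ as an increasing union of sets $F_k \in \B'$, with $\mu(F_k)$ finite and unbounded. The aim is a measure space partition $\{A_k\}_{k \in \N}$ of $X$ with $\mu(A_k) = 1$ for every $k$. The only tool needed is the intermediate value property for nonatomic measures: if $\mu(B) \ge t$, there is $B' \subset B$ with $\mu(B') = t$. This follows from nonatomicity by the usual exhaustion argument, using the completeness of $\M_X$ from Proposition \ref{prop3.01}.

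The sets $A_k$ are built greedily. Let $R_k = X \setminus (A_1 \cup \dots \cup A_{k-1})$ be the remainder, which has infinite measure. Let $j$ be the least index such that $F_j \cap R_k$ is not yet exhausted. Take $A_k$ to consist of all of $F_j \cap R_k$ if that has measure at most $1$, and then top it up from $F_{j'} \cap R_k$ for $j' > j$ until the measure reaches exactly $1$. The topping up is possible because $\mu(R_k) = \infty$.

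We must check that $\bigcup_k A_k = X$ a.e. Each $F_j$ has finite measure, so it is used up after finitely many steps. Hence every $F_j$ is contained in $\bigcup_k A_k$, and therefore so is $X$.

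\textbf{Step 2: apply the finite case piece by piece.} Each subalgebra $\M_{A_k}$ is normalized, nonatomic and separable; separability passes to subspaces of a separable metric space. Likewise $\M_{[k,k+1]}$ is normalized, separable and nonatomic. Proposition \ref{prop3.03} then gives measure algebra isomorphisms $S_k : \M_{A_k} \to \M_{[k,k+1]}$ for $k \in \Z$, after indexing the $A_k$ by $\Z$ instead of $\N$.

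Define $S : \M_X \to \M_{\R}$ by
\[
S(A) = \bigcup_k S_k(A \cap A_k).
\]
For $A \in \M_X$ of finite measure, the measure of the right side is $\sum_k \mu(A \cap A_k) = \mu(A) < \infty$, so $S(A)$ lies in $\M_{\R}$.

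\textbf{Step 3: verify that $S$ is an isomorphism.} Unions and differences are preserved coordinatewise on each piece, and measure is preserved by countable additivity. Full sequences go to full sequences because $\{S_k(A_k)\} = \{[k,k+1]\}$ is full in $\M_{\R}$. For surjectivity, take $B \in \M_{\R}$. Each $B \cap [k,k+1]$ equals $S_k(A^{(k)})$ for some $A^{(k)}$, and $\sum_k \mu(A^{(k)}) = \lm(B) < \infty$. So $A = \bigcup_k A^{(k)}$ lies in $\M_X$, and $S(A) = B$.

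By symmetry any two such algebras are isomorphic, each being isomorphic to $\M_{\R}$.

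The main obstacle is Step 1, namely producing the measure-$1$ pieces so that they exhaust $X$. This rests on the intermediate value property, which the paper has only used implicitly so far, through refinements of small mesh. The greedy bookkeeping is what guarantees that nothing of positive measure is left over.
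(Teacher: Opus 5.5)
Your argument is correct, but it differs from the paper at the decomposition step. The paper does not require the pieces to have measure $1$. It takes any partition $\{A_k\}_{k\in\Z}$ of $X$ into sets of finite positive measure $a_k$, for instance the non-null differences $F_{j+1}\setminus F_j$ of your exhausting sequence. It then adapts the target instead, laying down consecutive intervals $J_k\subset\R$ of length $a_k$. The only bookkeeping is to number the pieces so that $\sum_{k>0}a_k$ and $\sum_{k<0}a_k$ both diverge, which makes the $J_k$ exhaust $\R$. The paper asserts this numbering without detail; it is easy to get by grouping the terms into consecutive blocks of sum at least $1$ and sending alternate blocks to the two sides. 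Proposition \ref{prop3.03}, after normalizing, is then applied to each pair $(A_k,J_k)$, just as in your Step 2. So the paper needs neither the intermediate value property for nonatomic measures nor your greedy exhaustion argument. The step you call the main obstacle can be avoided by letting the interval lengths match whatever measures the pieces happen to have. What your normalization buys is a fixed target decomposition $\{[k,k+1]\}_{k\in\Z}$: any bijection $\N\to\Z$ works and the covering of $\R$ is automatic. You also verify the measure-algebra-map axioms and surjectivity explicitly, which the paper leaves implicit after writing down $S$ and $S^{-1}$. One small point: your one-line check of condition (iii) should add that each $S_k$, being a map between finite measure algebras, sends the full sequence $\{B_i\cap A_k\}_i$ to a full sequence in $\M_{[k,k+1]}$. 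That is what gives $\bigcup_i S(B_i)\supseteq[k,k+1]$ for every $k$.
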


  \begin{proof} We construct an isomorphism of  $\M_X$ with $\M_{\R}$. Let
  $\P = \{ A_k \in \B' : k \in \Z \}$ be a partition of $X$ by sets of finite, positive measure with $a_k = \mu(A_k)$. Since $\mu(X) = \infty, \
  \sum_k \ a_k = \infty.$ We choose the numbering so that $\sum_{ k > 0 } \ a_k$ and $\sum_{k < 0} \ a_k$ are both infinite.
  Let $J_0 = [0,a_0)$ and for $k > 0$ let $J_k = [a_0 + \dots + a_{k-1}, a_0 + \dots + a_k)$.
  Let $J_{-1} = [a_{-1},0)$ and for $k < -1$ let $J_k = [a_k + \dots a_{-1}, a_{k+1} + \dots a_{-1})$. By normalizing,
  we can apply Proposition \ref{prop3.03} to obtain a measure space isomorphism $S_k : (A_k,\mu|A_k) \to (J_k,\lm|A_k)$
  and define for $A \in \M_X, B \in \M_I$
  \begin{equation}\label{eq3.06a}\begin{split}
 S(A) = \bigcup_k \ S_k(A \cap A_k), \hspace{1cm}\\
 S^{-1}(B) = \bigcup_k \ (S_k)^{-1}(B \cap S(A_k)).
 \end{split}\end{equation}
 to get the required isomorphism.

  \end{proof}\vspace{.25cm}

  For measure algebras $\M_Y$ and $\M_X$ let $\S(\M_Y,\M_X)$ be the set of measure algebra maps from $\M_Y$ to $\M_X$ and let $\S_{iso}(\M_Y,\M_X)$
  be the subset of isomorphisms. On $\S$ we use the coarse topology of pointwise convergence. That is, a net $\{ S_i \}$ converges to $S$ when
  $\{ S_i(A) \}$ converges to $S(A)$ for all $A \in \M_Y$. Since the elements of $\S$ are isometries, it suffices to check this on a dense subset
  of $\M_Y$. Thus, in the separable case with  $\{ E_n : n \in \N \}$  a dense sequence in $\M_Y$,  the topology is given by the metric
     \begin{equation}\label{eq3.04}
     d(S_1, S_2) = \max_n \ \frac{1}{n} \min (d(S_1(E_n),S_2(E_n)), 1)
     \end{equation}
     Notice that if $\{ S_n \}$ is a Cauchy sequence in $\S$, then for each $A \in \M_Y$, $\{ S_n(A) \}$ is a Cauchy sequence in the complete space
     $\M_X$ and so it converges to an element which we label $S(A)$.  It follows from continuity of the measure  and of the algebraic operations
     that $S$ is a measure algebra map and so is the limit in $\S$ of the sequence. Thus, if $\M_X$ and $\M_Y$ are separable, then $\S(\M_X,\M_Y)$ is
     a completely metrizable space.

     Furthermore, the countable product of separable metric  spaces is a separable metrizable space with the product topology. From the metric
     \ref{eq3.04} we see that the map $S \mapsto \{ S(E_i) \}$ is an embedding of $\S(\M_Y, \M_X)$ into the product space $\M_X^{\{ E_i \}}$.
     Thus, $\S(\M_Y, \M_X)$ is a completely metrizable, separable space.
     \vspace{.25cm}

\begin{prop}\label{prop3.04}($\mathbf{G_{\d}}$) If $\M_Y$ and $\M_X$ are separable, nonatomic measure algebras, then the set of
isomorphisms $\S_{iso}(\M_Y.M_X)$ is a $G_{\d}$ subset of $\S(\M_Y.M_X)$.\end{prop}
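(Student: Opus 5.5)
Since every $S \in \S(\M_Y,\M_X)$ is an isometry, and in particular injective, $S$ is an isomorphism exactly when it is surjective. Because $S(\M_Y)$ is complete, hence closed in $\M_X$, surjectivity is equivalent to $S(\M_Y)$ being dense in $\M_X$. Fix a dense sequence $\{ F_m \}$ in $\M_X$ and a dense sequence $\{ E_n \}$ in $\M_Y$. Density of the image is equivalent to: for every $m$ and every $k$ there is some $n$ with $d(S(E_n), F_m) < 1/k$. This uses that $S$ is an isometry and $\{E_n\}$ is dense, so $\{S(E_n)\}$ is dense in $S(\M_Y)$.

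So I will write
\begin{equation*}
\S_{iso}(\M_Y,\M_X) = \bigcap_{m,k} \ \bigcup_n \ \{ S \in \S : d(S(E_n),F_m) < 1/k \}.
\end{equation*}
Each set $\{ S : d(S(E_n),F_m) < 1/k \}$ is open in the topology of pointwise convergence, since $S \mapsto S(E_n)$ is continuous into $\M_X$. The union over $n$ is therefore open, and the countable intersection is a $G_{\d}$.

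Next I check both inclusions.

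\emph{Isomorphisms lie in the intersection.} If $S$ is surjective, then for each $m$ choose $B$ with $S(B) = F_m$. By density pick $E_n$ with $d(E_n,B) < 1/k$. The isometry property then gives $d(S(E_n),F_m) < 1/k$.

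\emph{Maps in the intersection are isomorphisms.} If $S$ is in the intersection, then every $F_m$ is a limit of points $S(E_{n_k})$. Since $S(E_{n_k})$ is Cauchy and $S$ is isometric, $E_{n_k}$ is Cauchy in the complete space $\M_Y$ (Proposition \ref{prop3.01}). Its limit $B$ satisfies $S(B) = F_m$. Hence $S(\M_Y)$ contains the dense set $\{F_m\}$, and being closed it equals $\M_X$. So $S$ is bijective, and as noted after Definition \ref{df3.02}, the inverse is then a measure algebra map. In the infinite case I should also confirm that the inverse satisfies condition (iii). This holds because fullness of a sequence $\{A_i\}$ is the algebraic statement $A = \bigcup_i A \cap A_i$ for all $A$, which transfers under a bijective map preserving unions, intersections and the metric.

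The main point needing care is that last verification in the $\s$-finite case: countable unions are metric limits of finite unions only when the measures are bounded. I would handle it by applying the fullness condition for each $A$ of finite measure separately, where the relevant increasing unions are bounded by $\mu(A)$, so $S^{-1}$ commutes with them by continuity. Nonatomicity is not needed for this argument.
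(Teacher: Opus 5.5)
Your proof is correct and is essentially the paper's argument: the paper fixes a dense sequence $\{E_i\}$ in $\M_X$, notes that the range of the isometry $S$ is closed, and writes $\S \setminus \S_{iso}$ as the $F_{\s}$ union of the closed sets $\S_{ik} = \{ S : d(E_i, S(B)) \ge 1/k \ \text{for all} \ B \in \M_Y \}$, whose complements are exactly your open sets $\bigcup_n \{ S : d(S(E_n),F_m) < 1/k \}$. Your extra check that a bijective measure algebra map has an inverse satisfying condition (iii) in the $\s$-finite case fills in what the paper calls clear, and you are right that nonatomicity is not used.
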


\begin{proof} Let $\{E_i \} $ be a dense sequence in $\M_X$. If $S \in \S$, then because it is an isometry, the range $S(\M_Y)$ is a closed subset
of $\M_X$. Since $S$ is not an isomorphism exactly when it is not surjective, this occurs when some $E_i$ is at a positive distance from the range of $S$.
Define for  positive integers $i, k$
\begin{equation}\label{eq3.05}
\S_{ik} = \{ S : d(E_i, S(B)) \ge 1/k \ \text{for all} \ B \in \M_Y \}.
\end{equation}
This is a closed set and $\S \setminus \S_{iso}$ is the union of the countable collection obtained by varying $i$ and $k$. Since
$\S \setminus \S_{iso}$ is an $F_{\s}$ set, the complement $\S_{iso}$ is a $G_{\d}$.

\end{proof}\vspace{.25cm}

\begin{prop}\label{prop3.05}(\textbf{Density}) Let $S : \M_Y \to \M_X$ be a measure algebra map with $\M_X$ and $\M_Y$  separable,
nonatomic measure spaces. If $\{ E_1, E_2, \dots \}$ is a dense sequence in $\M_Y$, then for any positive integer $n$, there exists
a measure algebra isomorphism $S_n : \M_Y  \to \M_X$ such that $S(E_i) = S_n(E_i)$ for $i = 1, \dots, n$. The sequence
$\{ S_n \}$ converges to $S$ in $\S(\M_Y,\M_X)$.  \end{prop}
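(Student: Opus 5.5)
Following the pattern of Proposition \ref{prop2.04}, the plan is to build $S_n$ piece by piece on a finite partition. Let $\P$ be the measure algebra partition of $\M_Y$ generated by $E_1, \dots, E_n$, that is, the nonzero atoms of the finite Boolean algebra they generate, together with their complements. In the infinite case these pieces may have infinite measure; I handle this below. For each $B \in \P$, the image $S(B)$ is an element of $\M_X$. Since $S$ preserves unions, differences and measure, and carries full sequences to full sequences, the family $\{ S(B) : B \in \P \}$ is a measure algebra partition of $\M_X$ with $\mu(S(B)) = \nu(B)$.

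For each $B \in \P$ I choose a measure algebra isomorphism $S_B : \M_B \to \M_{S(B)}$ between the subspace algebras. Both are separable and nonatomic. If $\nu(B) = 0$ the piece is dropped. If $0 < \nu(B) < \infty$, normalize both measures by $\nu(B)$ and apply Proposition \ref{prop3.03}. If $\nu(B) = \infty$, then $\mu(S(B)) = \infty$ as well, and each of $B$ and $S(B)$ is $\s$-finite, so Proposition \ref{prop3.03y} applies. One has to check that infinite pieces are $\s$-finite: any finite-measure set of $\M_B$ lies in $\M_Y$, so $\s$-finiteness is inherited. Then I define
\[ S_n(A) = \bigcup_{B \in \P} S_B(A \cap B), \]
exactly as in (\ref{eq3.06a}). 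Its inverse is built from the $S_B^{-1}$ in the same way.

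Next I verify that $S_n$ is a measure algebra isomorphism. Unions and differences are preserved piecewise, and measure is additive over the partition. Fullness of image sequences follows because each $S_B$ preserves fullness within $S(B)$ and the $S(B)$ form a partition. The same arguments apply to the inverse, so $S_n$ is bijective. Each $E_i$ with $i \le n$ is a union of elements of $\P$, and $S_B(B) = S(B)$, so $S_n(E_i) = \bigcup_{B \subset E_i} S(B) = S(E_i)$. A minor point is that $E_i$ may have infinite measure as a subset, but the dense sequence lies in $\M_Y$, so its elements have finite measure.

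For convergence, fix $m$. For $n \ge m$ we have $S_n(E_m) = S(E_m)$, so in the metric (\ref{eq3.04}) the distance $d(S_n,S)$ is at most $\max_{m > n} \frac{1}{m} \le 1/(n+1) \to 0$. I expect the main obstacle to be bookkeeping in the infinite case. One must ensure that infinite-measure pieces such as $Y \setminus \bigcup E_i$ are handled. One must also ensure that the fullness condition (iii) transfers to the partition $\{S(B)\}$; this requires writing infinite pieces as countable unions of finite-measure sets and using (iii) for $S$.
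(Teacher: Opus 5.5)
Your proposal is correct and follows the same scheme as the paper. You partition $Y$ by the atoms generated by $E_1, \dots, E_n$, transport that partition by $S$, glue together isomorphisms of the pieces supplied by the uniqueness results, and read off convergence from $S_n(E_i) = S(E_i)$ for $i \le n$.

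The one real difference is in the infinite case. The paper first intersects with a countable partition $\P_0$ of $Y$ into sets of finite measure. Every piece $B_k$ then lies in $\M_Y$, the family $\{ S(B_k) \}$ is a partition of $X$ directly by axiom (iii), and only Proposition \ref{prop3.03} is needed.

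You instead keep a finite partition containing the single infinite-measure piece $Y \setminus (E_1 \cup \dots \cup E_n)$. That piece is not in $\M_Y$, so your early sentence that $S(B)$ is an element of $\M_X$ for every $B \in \P$ is not literally meaningful; you must first extend $S$ to it. The cleanest extension is to assign to this piece the set $X \setminus S(E_1 \cup \dots \cup E_n)$. This makes the image family a partition of $X$ for free, and the new piece has infinite measure because $\mu(X) = \nu(Y) = \infty$ while $S(E_1 \cup \dots \cup E_n)$ has finite measure. Your exhaustion argument via (iii) produces the same set. Proposition \ref{prop3.03y} then handles that piece.

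So your route replaces the paper's countable gluing with a finite one. The cost is stepping outside $\M_Y$ and invoking the $\s$-finite uniqueness theorem, whose own proof does exactly the chopping into finite-measure pieces that the paper does up front.
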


\begin{proof} In the infinite case we begin with an infinite partition $\P_0$ of $\M_Y$, i.e. a partition of $Y$ by elements of $\B'_Y$.
In the finite case, let $\P_0 = \{ Y \}$  Let $\P = \{ B_i\} $ be the common refinement partition of $\P_0$ with
 $\bigwedge_{i = 1}^n \ \{E_i, Y \setminus E_i \}$ and let
$S(\P) = \{ S(B_i) \}$ be the image partition of $\M_X$.  Note that $\P_0$ and hence also $\P$ are full sequences in $\M_Y$ and
so $S(\P)$ is a full sequence in $\M_X$. For each $k$, $\M_{B_k} \subset \M_Y$
and $\M_{S(B_k)} \subset \M_X$ are finite, nonatomic
measure algebras. Each is separable as any subspace of a separable metric space is separable.

We can normalize by dividing the measures
by $\nu(B_k) = \mu(S(B_k))$ and then apply Proposition \ref{prop3.03} to get an isomorphism $\hat S_k : \M_{B_k} \to \M_{S(B_k)}$. We obtain
the isomorphism $S_n$ by defining for $B \in \M_Y$ and $A \in \M_X$:
 \begin{equation}\label{eq3.06}\begin{split}
 S_n(B) = \bigcup_k \ \hat S_k(B \cap B_k), \hspace{1cm}\\
 S_n^{-1}(A) = \bigcup_k \ (\hat S_k)^{-1}(A \cap S(B_k)).
 \end{split}\end{equation}
 Since $S_n(B_k) = S(B_k)$ for all $k$ and each $E_i$ with $i \le n$ is a union of the $B_k$'s it follows that $S_n(E_i) = S(E_i)$.

 For each $i$, the sequence $\{ S_n(E_i) \}$ is  constant at $S(E_i)$ once $n \ge i$. Since $\{ E_i \}$ is dense in $\M_Y$, the sequence
 $\{ S_n \}$ converges to $S$.

 \end{proof}
\vspace{.25cm}

 From Propositions \ref{prop3.04} and \ref{prop3.05} we  obtain: \vspace{.25cm}

 \begin{theo}\label{theo3.06}  If $\M_Y$ and $\M_X$ are separable, nonatomic measure algebras, then the set $\S_{iso}(\M_Y,\M_X)$ of
 measure algebra isomorphisms is a dense, $G_{\d}$ subset of the
 completely metrizable space $\S(\M_Y,\M_X)$ of measure algebra mappings. Thus, the generic measure algebra map is invertible.\end{theo}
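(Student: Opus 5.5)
The theorem follows by combining the three ingredients already in place, in the pattern of Theorem \ref{theo2.05}. The plan is first to record that $\S(\M_Y,\M_X)$ is completely metrizable and separable. This was shown just before Proposition \ref{prop3.04}: a Cauchy sequence $\{S_n\}$ in the metric (\ref{eq3.04}) converges pointwise to a map $S$. By continuity of the measure and of the Boolean operations, and by completeness of $\M_X$ (Proposition \ref{prop3.01}), $S$ is again a measure algebra map. Next, Proposition \ref{prop3.04} gives that $\S_{iso}(\M_Y,\M_X)$ is a $G_\d$ subset.

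The second step is density. Given any $S\in\S(\M_Y,\M_X)$, Proposition \ref{prop3.05} produces isomorphisms $S_n$ with $S_n(E_i)=S(E_i)$ for $i\le n$. Hence $d(S_n,S)\le 1/(n+1)$ in the metric (\ref{eq3.04}), so every point of $\S$ lies in the closure of $\S_{iso}$. A dense $G_\d$ in a completely metrizable space is residual, which gives the final sentence: the generic map is invertible.

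The only point needing care is a hidden hypothesis. A measure algebra map exists only when $\mu(X)=\nu(Y)$, so the finite and infinite cases are treated separately. If $\S$ is empty the statement is vacuous. Otherwise, in the finite case I would normalize both measures by the common total mass before invoking Proposition \ref{prop3.03} inside Proposition \ref{prop3.05}. In the infinite case Proposition \ref{prop3.05} already partitions $Y$ into pieces of finite measure, and the images under $S$ form a full sequence in $\M_X$ by condition (iii) of Definition \ref{df3.02}.

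The main obstacle I expect is the closedness of $\S_{ik}$ in the proof of Proposition \ref{prop3.04}, which the $G_\d$ claim relies on. I would check it directly. Suppose $S_m\to S$ pointwise and $d(E_i,S_m(B))\ge 1/k$ for all $B$ and all $m$. For fixed $B$, $S_m(B)\to S(B)$, so $d(E_i,S(B))\ge 1/k$ by continuity of the distance. I would also verify that the union of the $\S_{ik}$ is exactly the set of non-surjective maps. This uses that the range of an isometry from a complete space is closed: if some $A\notin S(\M_Y)$, then a nearby $E_i$ with $d(E_i,A)$ small lies at positive distance from the range.
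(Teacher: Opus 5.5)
Your route is the paper's: the theorem is read off from the completeness remark preceding Proposition \ref{prop3.04}, the $G_\d$ statement of Proposition \ref{prop3.04}, and the density statement of Proposition \ref{prop3.05}. Your added checks are correct: the bound $d(S_n,S)\le 1/(n+1)$, the closedness of $\S_{ik}$, and the closedness of the range.

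The step that fails is the first one, when the measures are infinite. Continuity of the measure and of the Boolean operations shows that a pointwise limit satisfies (i) and (ii) of Definition \ref{df3.02}. It does not give (iii), and (iii) can fail in the limit.

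Here is a counterexample on $\M_{\R}$. Let $\phi:\R\to[0,\infty)$ translate $[k,k+1)$ onto $[2k+1,2k+2)$ and $[-k-1,-k)$ onto $[2k,2k+1)$ for $k\ge 0$, so that $\phi([-n,n))=[0,2n)$. Let $T_n$ be the bijection of $\R$ that equals $\phi$ on $[-n,n)$ and carries $\R\setminus[-n,n)$ onto $\R\setminus[0,2n)$ by translating unit intervals.
\begin{itemize}
\item[(1)] Setting $S_n(B)=T_n(B)$ gives isomorphisms of $\M_{\R}$ with $\lm(T_n(B)+\phi(B))\le 2\lm(B\setminus[-n,n))\to 0$.
\item[(2)] Hence $\{S_n\}$ is Cauchy for (\ref{eq3.04}), with pointwise limit $S(B)=\phi(B)$.
\item[(3)] But $S$ sends the full sequence $\{[k,k+1)\}_{k\in\Z}$ to a sequence covering only $[0,\infty)$, so $S$ violates (iii).
\item[(4)] A limit in $\S$ would be an isometry agreeing with $S$ on the dense sequence, hence equal to $S$. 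So $\{S_n\}$ has no limit in $\S$.
\end{itemize}
Thus (\ref{eq3.04}) is not a complete metric on $\S$ in the infinite case. The paper's remark before Proposition \ref{prop3.04} has the same gap. In the finite case (iii) is redundant, and your argument is fine there.

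The conclusion survives with one extra step.
\begin{itemize}
\item[(a)] The set of maps satisfying only (i) and (ii) is complete for (\ref{eq3.04}), by exactly your argument.
\item[(b)] Inside that set, $\S$ is a $G_\d$. Fix partitions $\{P_k\}$ of $Y$ and $\{Q_j\}$ of $X$ into sets of finite measure. Maps satisfying (i) and (ii) are isometries that preserve finite unions, so they preserve countable unions contained in a set of finite measure. Hence (iii) is equivalent to fullness of $\{S(P_k)\}$, that is, to
\[ S\in\bigcap_{j,m}\ \bigcup_{N}\ \{ S : \mu(Q_j\cap S(P_1\cup\dots\cup P_N)) > \mu(Q_j)-1/m \}, \]
a countable intersection of open sets.
\item[(c)] A $G_\d$ subset of a completely metrizable space is completely metrizable, though with a metric other than (\ref{eq3.04}). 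So $\S$ is completely metrizable.
\item[(d)] By Propositions \ref{prop3.04} and \ref{prop3.05}, whose arguments do not use completeness, $\S_{iso}$ is a dense $G_\d$ in $\S$. The Baire category conclusion then stands.
\end{itemize}
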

\vspace{.25cm}

Our lifting results will concern measure space maps between nonatomic measure spaces of Borel measures on Polish spaces.  A topological space is
a \emph{Polish space} when it is separable and admits a complete metric.  A subset $A$ of a Polish space $X$ has a Polish relative topology if and
only if $A$ is a $G_{\d}$ subset of $X$. When $(X,\mu)$ is a measure space with $X$ a Polish space and $\B_X$ the Borel sets  we will call
$(X,\mu)$ a \emph{Polish measure space}.

Our results are taken from Royden \cite{R} Chapter 15, \emph{Mappings of Measure Spaces}.

For a Borel measure $\mu$ on a separable metrizable space $X$, the support of $\mu$, written $supp(\mu)$, is the complement of the union of all
open sets of measure zero. Since we may take the union just over the sets in a countable base, we see that the union has measure zero and so
the closed set $supp(\mu)$ has full measure and every open set which meets the support has positive measure. The measure on $X$ is called
\emph{full} when $supp(\mu) = X$ and so when every nonempty open subset has positive measure.

Notice that if $A$ is an atom for $\mu$, then the support of the restriction $\mu|A$ must be a singleton. Thus, $\mu$ is nonatomic exactly when
each singleton has measure zero, or, equivalently, all countable subsets have measure zero. So if $\mu$ is a nonatomic measure on a separable, metrizable space
$X$, then $X$ is uncountable. If $\mu$ is full, then there are no isolated points and so $X$ is perfect.

If a measure on a separable metric space is nonatomic, then each point has an open neighborhood with finite measure. A
separable metric space is Lindel\"{o}f and so the measure is $\s$-finite.

We first observe that the functor from measure space maps to measure algebra maps is injective.\vspace{.25cm}

\begin{prop}\label{prop3.07} Let $T_1, T_2 : (X,\mu) \to (Y, \nu)$ be measure space maps with $\nu$ a Borel measure on
the separable metric space $Y$. If $T_1^* = T_2^* : \M_Y \to \M_X$, then $T_1 = T_2$ a.e. \end{prop}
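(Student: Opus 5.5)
Since $Y$ is a separable metric space, I would fix a countable base $\{V_j : j \in \N\}$ for its topology. The set where the two maps differ can be written as
\begin{equation*}
\{ x : T_1(x) \ne T_2(x) \} = \bigcup_j \ \bigl( T_1^{-1}(V_j) + T_2^{-1}(V_j) \bigr).
\end{equation*}
To see this, suppose $T_1(x) \ne T_2(x)$. Because $Y$ is Hausdorff, some basic set $V_j$ contains $T_1(x)$ but not $T_2(x)$, so $x$ lies in the Boolean sum $T_1^{-1}(V_j) + T_2^{-1}(V_j)$. Conversely, if $T_1(x) = T_2(x)$, then $x$ is in none of these Boolean sums. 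Each set in the union is Borel, since $T_1$ and $T_2$ are measurable and the $V_j$ are open.

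It therefore suffices to show that each $T_1^{-1}(V_j) + T_2^{-1}(V_j)$ has $\mu$-measure zero, because a countable union of null sets is null. If $V_j$ has finite $\nu$-measure, then $V_j \in \B'_Y$, and the hypothesis $T_1^* = T_2^*$ says that $T_1^{-1}(V_j)$ and $T_2^{-1}(V_j)$ represent the same element of $\M_X$. This is exactly the statement that their Boolean sum is null.

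The only obstacle is that $T^*$ is defined only on $\B'_Y$, the sets of finite measure, while an open $V_j$ may have infinite $\nu$-measure. I would handle this with $\s$-finiteness of $\nu$. Since $\nu$ is a finite or $\s$-finite measure, choose $F_m \in \B'_Y$ with $Y = \bigcup_m F_m$. Each $V_j \cap F_m$ lies in $\B'_Y$, so
\begin{equation*}
\mu\bigl( T_1^{-1}(V_j \cap F_m) + T_2^{-1}(V_j \cap F_m) \bigr) = 0
\end{equation*}
for all $j$ and $m$. The Boolean sum $T_1^{-1}(V_j) + T_2^{-1}(V_j)$ is contained in $\bigcup_m \bigl( T_1^{-1}(V_j \cap F_m) + T_2^{-1}(V_j \cap F_m) \bigr)$, because preimages commute with unions. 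Hence it is null.

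Replacing the base $\{V_j\}$ by the countable family $\{ V_j \cap F_m \}$ in the displayed decomposition then gives $\mu(\{ x : T_1(x) \ne T_2(x) \}) = 0$ directly. This is the conclusion $T_1 = T_2$ a.e. No earlier result is needed beyond the definitions of $\M_X$ and of $T^*$.
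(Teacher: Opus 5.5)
Your proof is correct and rests on the same idea as the paper's: a countable family of Borel sets separating points of $Y$ covers $\{x : T_1(x) \ne T_2(x)\}$ by countably many sets $T_1^{-1}(B) + T_2^{-1}(B)$, each null since $T_1^* = T_2^*$ (you use a countable base, where the paper uses countable partitions of $Y$ into measurable sets of diameter at most $1/k$). Your union identity also gives measurability of this set without the paper's appeal to the product map $T_1 \times T_2$, and your reduction to sets in $\B'_Y$ via $\s$-finiteness makes explicit a point the paper leaves implicit, namely that the separating sets must have finite measure for $T_i^*$ to be defined on them.
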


\begin{proof} The product map $T_1 \times T_2 : X \to Y \times Y$ is measurable because the Borel subsets of $Y \times Y$ are generated by
the sets $B_1 \times B_2$ with $B_1, B_2 \in \B_Y$. Hence, $T_1 = T_2$ a.e. if and only if the set $\{ x : T_1(x) \not= T_2(x) \}$ has measure zero.
If this is not true, then for some positive integer $k$, the set $A =  \{ x : d(T_1(x),T_2(x)) > 1/k \}$ has positive measure.

Let $\P $ be a partition of $Y$ by measurable sets each of metric diameter at most $1/k$. Again we use the
Lindel\"{o}f property. For some $B \in \P$, the set $A \cap T_1^{-1}(B)$ has
positive measure. If $x \in A \cap T_1^{-1}(B)$ then $T_1(x) \in B$ and $d(T_2(x),T_1(x)) > 1/k$ implies $T_2(x) \not\in B$. That is,
$T_2^{-1}(B) $ is disjoint from $A \cap T_1^{-1}(B)$ and so $T_1^*(B) \not= T_2^*(B)$.

\end{proof}\vspace{.25cm}

For the following deep result we refer to \cite{R} Proposition 15.19 (Theorem 15.11 of the second edition).\vspace{.25cm}

 \begin{theo}\label{theo3.08} Let $(X,\mu)$ and $(Y,\nu)$ be separable measure spaces with $X$ and $Y$
 uncountable and $(Y,\nu)$  Polish. If $S : \M_Y \to \M_X$ is a measure algebra map, then there exists $T : (X,\mu) \to(Y,\nu)$ a measure
 space map with $T^* = S$.
 \end{theo}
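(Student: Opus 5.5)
The plan is the classical point-realization argument. We construct $T$ by specifying preimages of a countable generating family of Borel sets in $Y$ and then reading off the point $T(x)$ from which sets it must lie in. The first reduction is to replace $Y$ by a standard model. A Polish space is Borel isomorphic to a Borel subset of $C = \{0,1\}^{\N}$, or of $[0,1]$. So we fix a countable base $\{V_n\}$ of $Y$ and the injective Borel map $\phi : Y \to C$, $\phi(y)_n = 1_{V_n}(y)$. The image $\phi(Y)$ is a Borel subset of $C$ by the Lusin--Souslin theorem, and $\phi^{-1}$ is Borel on it. This lets us transfer $\nu$ to a Borel measure on $C$ concentrated on $\phi(Y)$. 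It therefore suffices to realize the composite algebra map when the target is $C$ (or $\phi(Y)\subset C$) with the cylinder sets $C_w$ as generators.

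Second, for each finite string $w$ we choose a representative $A_w \in \B_X$ of $S(\phi^{-1}(C_w))$. Because $S$ preserves finite unions and differences, and (iii) preserves full countable sequences, the following relations hold modulo null sets:
\begin{itemize}
\item $A_w = A_{w0}\cup A_{w1}$, with $A_{w0}\cap A_{w1}=0$;
\item $\bigcup_{|w|=1} A_w = X$.
\end{itemize}
There are only countably many such relations. Discarding the countable union of the exceptional null sets gives a full-measure $X_0$ on which they hold exactly. For $x\in X_0$ there is then a unique infinite string $\psi(x)\in C$ with $x\in A_{\psi(x)|[m]}$ for every $m$. The map $\psi$ is measurable since $\psi^{-1}(C_w)=A_w\cap X_0$.

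Third, we must ensure that $\psi(x)$ lands in $\phi(Y)$. The set $\psi^{-1}(C\setminus\phi(Y))$ should have measure zero. We get this by showing $\mu\circ\psi^{-1}$ agrees with $\nu\circ\phi^{-1}$ on cylinders, hence on all Borel sets by uniqueness of extension, using $\s$-finiteness and working piecewise on finite-measure pieces. This is the main obstacle. A Borel set is not generally one of the sets $S$ was applied to in the construction, so equality of the pushforward measure with the image measure must come from a $\pi$-$\lambda$ argument rather than directly from $S$. After that, on the full-measure set $X_1 = X_0\cap\psi^{-1}(\phi(Y))$ we define $T=\phi^{-1}\circ\psi$. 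On the null complement we send $T$ to a fixed point of $Y$, which is where $X$ and $Y$ being uncountable (nonempty) is harmless. The map $T$ is measurable, and $\mu(T^{-1}(B))=\nu(B)$ for every Borel $B$.

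Finally, we check $T^*=S$. Both maps are measure algebra maps $\M_Y\to\M_X$, hence isometries, and they agree on the sets $\phi^{-1}(C_w)$. These generate the Borel $\s$-algebra of $Y$, and the algebra they generate is dense in $\M_Y$ by the standard approximation lemma for $\s$-finite measures. Agreement extends to finite unions by additivity, and then to all of $\M_Y$ by continuity. Separability of $(X,\mu)$ is not essentially used beyond guaranteeing that the measure algebra framework applies.
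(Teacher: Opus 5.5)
\paragraph{Verdict.} Your route is sound and complete when $\nu(Y)<\infty$, but as written it does not handle the infinite $\s$-finite case, which the statement covers.

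\paragraph{The finite case, and the gap.} The paper does not prove this statement; it cites Royden. Your realization argument is a legitimate alternative: you code $Y$ into $C$ by a countable base, use Lusin--Souslin to make $\phi(Y)$ Borel with Borel inverse, and use measure preservation to force $\psi(x)\in\phi(Y)$ for a.e.\ $x$. As you say, it uses nothing about $X$. The gap is the infinite case. The paper's measure spaces are finite or $\s$-finite, and it applies this theorem to infinite spaces such as $(\R,\lambda)$. Elements of $\M_Y$ are classes of sets of \emph{finite} measure, so $S(\phi^{-1}(C_w))$ is undefined whenever $\nu(\phi^{-1}(C_w))=\infty$. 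If $\nu(Y)=\infty$ this happens for at least one $w$ of every length $m$, because the $2^m$ sets $\phi^{-1}(C_w)$ with $|w|=m$ partition $Y$. So Step 2 cannot be carried out as written, and $\bigcup_{|w|=1}A_w=X$ is not an instance of (iii).

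\paragraph{Why extending $S$ alone is not enough.} Steps 3 and 4 as stated would still fail. The $\pi$-$\lambda$ uniqueness theorem and the approximation lemma both require $Y$ to be a countable union of finite-measure sets from the algebra generated by the $V_n$, and this can fail. For example, let $d\nu=f\,d\lambda$ on $\R$ with $f=\sum_n 2^{-n}|x-q_n|^{-1}$, where $\{q_n\}$ enumerates $\Q$. By Borel--Cantelli $f<\infty$ a.e., so $\nu$ is $\s$-finite and nonatomic. Yet every nonempty open set has infinite measure. So for a base of intervals, every non-null set in that algebra has infinite measure.

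\paragraph{The repair.} Decompose first. Take a Borel partition $\{P_k\}$ of $Y$ into sets of finite positive measure. By (iii), $\{S(P_k)\}$ partitions $X$ modulo null sets, and $S$ restricts to measure algebra maps $\M_{P_k}\to\M_{S(P_k)}$ between finite spaces. Run your finite argument on each piece with $\phi|P_k$; here $\phi(P_k)$ is still Borel by Lusin--Souslin. Then glue the resulting maps.

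Alternatively, put $\bar S(B)=\bigcup_k S(B\cap P_k)$. This is a measure-preserving $\s$-homomorphism on all Borel sets modulo null sets, and you can build $\psi$ from $\bar S$. The class $\{E\subset C \ \text{Borel} : \psi^{-1}(E)=\bar S(\phi^{-1}(E)) \ \text{a.e.}\}$ is a $\s$-algebra containing the cylinders. Taking $E=C\setminus\phi(Y)$ gives $\psi(x)\in\phi(Y)$ a.e. Taking $E=\phi(B)$ gives $T^{-1}(B)=\bar S(B)$, which equals $S(B)$ when $\nu(B)<\infty$. This version needs no $\pi$-$\lambda$ theorem or density lemma.
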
\vspace{.25cm}

 \begin{cor}\label{cor3.09} (a)  Let $(X,\mu)$ and $(Y,\nu)$ be Polish measure spaces with $X$ and $Y$
 uncountable. If $S : \M_Y \to \M_X$ is a measure algebra isomorphism, then there exists $T : (X,\mu) \to(Y,\nu)$ a measure
 space isomorphism with $T^* = S$.

 (b) Let $T : (X,\mu) \to(Y,\nu)$ be a measure
 space map between Polish measure spaces with $X$ and $Y$ uncountable. If $T^*$ is a measure
 algebra isomorphism, then $T$ is a measure space isomorphism.\end{cor}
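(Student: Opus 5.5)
The idea is to apply Theorem \ref{theo3.08} in both directions and then use Proposition \ref{prop3.07} to identify the compositions with the identities a.e. Part (b) will follow from part (a).

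For (a), let $S : \M_Y \to \M_X$ be an isomorphism with inverse $S^{-1} : \M_X \to \M_Y$, which is again a measure algebra map. Polish spaces are separable metric, and a Borel measure on a separable metric space gives a separable measure algebra, since the countable algebra generated by a countable base (intersected with sets of finite measure from a countable cover) is dense. So both spaces satisfy the hypotheses of Theorem \ref{theo3.08}, and both are Polish. Applying Theorem \ref{theo3.08} twice, we get a measure space map $T : (X,\mu) \to (Y,\nu)$ with $T^* = S$ and a measure space map $\hat T : (Y,\nu) \to (X,\mu)$ with $\hat T^* = S^{-1}$.

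Next we compute the compositions. Since $(\hat T \circ T)^{-1}(A) = T^{-1}(\hat T^{-1}(A))$, we have
\[
(\hat T \circ T)^* = T^* \circ \hat T^* = S \circ S^{-1} = id_{\M_X},
\]
and similarly $(T \circ \hat T)^* = id_{\M_Y}$. The identity map $id_X$ is a measure space map with $id_X^* = id_{\M_X}$. Proposition \ref{prop3.07} applies with target $X$, a separable metric space, and gives $\hat T \circ T = id_X$ a.e. Likewise $T \circ \hat T = id_Y$ a.e. Hence $T$ is a measure space isomorphism with inverse $\hat T$, and $T^* = S$ by construction.

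For (b), set $S = T^*$, which is by assumption an isomorphism. Part (a) gives a measure space isomorphism $T_1$ with $T_1^* = S = T^*$. Proposition \ref{prop3.07} then gives $T = T_1$ a.e. Let $\hat T_1$ be the inverse of $T_1$. Then $\hat T_1 \circ T = \hat T_1 \circ T_1 = id_X$ a.e. on $X$.

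For the other composition we need $T \circ \hat T_1 = id_Y$ a.e., which requires that the exceptional set $N = \{ x : T(x) \neq T_1(x) \}$ pulls back to a null set under $\hat T_1$. This holds because $\hat T_1$ is measure preserving, so $\nu(\hat T_1^{-1}(N)) = \mu(N) = 0$. Off the null set $\hat T_1^{-1}(N)$ we have $T \circ \hat T_1 = T_1 \circ \hat T_1 = id_Y$ a.e. Thus $T$ is a measure space isomorphism.

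The main obstacle is in (a): verifying that Theorem \ref{theo3.08} applies to both $S$ and $S^{-1}$, in particular that Polish measure spaces are separable in the paper's sense. Once this is settled, the rest is functoriality of ${}^*$ together with Proposition \ref{prop3.07}.
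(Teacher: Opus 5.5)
Your proposal is correct and follows the paper's proof. You apply Theorem \ref{theo3.08} to $S$ and $S^{-1}$, identify both compositions with the identities via functoriality of ${}^*$ and Proposition \ref{prop3.07}, and deduce (b) from (a) plus Proposition \ref{prop3.07}. You also spell out two points the paper leaves implicit, and both are sound: that Polish measure spaces have separable measure algebras, and that a map agreeing a.e.\ with an isomorphism is itself an isomorphism, since $\hat T_1^{-1}(N)$ is null.
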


 \begin{proof} (a) Applying Theorem \ref{theo3.08} to $S$ and $S^{-1}$ we obtain measure space maps $T : (X,\mu) \to(Y,\nu)$
 and $\hat T : (Y,\nu) \to(X,\mu)$ such that $T^* = S$ and $\hat T^* = S^{-1}$. Hence, $(\hat T \circ T)^* = T^* \circ \hat T^* = id_X^*$
 and so by Proposition \ref{prop3.07} $\hat T \circ T = id_X$ a.e. Similarly, $T \circ \hat T = id_Y$ a.e. Thus, $T$ and $\hat T$ are inverse isomorphisms.

 (b) By (a) there is a measure space isomorphism $\hat T$ with $\hat T^* = T^*$. By Proposition \ref{prop3.07}
 $T = \hat T$ a.e.

 \end{proof} \vspace{.25cm}

 By \cite{R} Proposition 15.11 a Borel measure $\mu$ on a metrizable space $X$ is \emph{regular}, i.e. for any Borel set $A$,
 $\mu(A) = \inf \{ \mu(U): U \ $ is open and $ \ A \subset U \}$. It follows that $A$ is contained in a $G_{\d}$ set $A_1$ with
 $\mu(A_1 \setminus A) = 0$. The inclusion of $A$ into $A_1$ thus induces a measure space isomorphism. If $X$ is Polish, then
 $A_1$ is Polish as well. It then follows from Proposition \ref{prop3.03} and Theorem \ref{theo3.08}that if $(X,\mu)$
 are $(Y,\nu)$ are nonatomic Polish measure spaces and $A \in \B_X, B \in \B_Y$
 with $\mu(A) = \nu(B) > 0$, then the measures spaces restricted to $A$ and $B$ are isomorphic. Notice that since the measures are nonatomic, sets of
 positive measure like $A$ and $B$ are uncountable.  \vspace{.25cm}

For Polish measure spaces the map $T \mapsto T^*$ from measure space maps to measure algebra maps is a bijection. So we use the topology from the space
of measure algebra maps to make this a homeomorphism. Thus, a sequence $\{ T_n \}$ of measure space maps converges to $T$ when for every Borel subset $E$ of the range we have
$\mu(T^{-1}_n(E) + T^{-1}(E)) \to 0$.  From Theorem \ref{theo3.06} we have:

 \begin{theo}\label{theo3.10}If $(X,\mu)$ and $(Y,\nu)$ are nonatomic
 Polish measure spaces (and so  with $X$ and $Y$ uncountable) the generic measure space map between them is invertible. \end{theo}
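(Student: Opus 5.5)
The plan is to transfer Theorem \ref{theo3.06} along the correspondence $T \mapsto T^*$. By definition, the space of measure space maps $(X,\mu) \to (Y,\nu)$, modulo equality a.e., carries the topology that makes $T \mapsto T^*$ a homeomorphism onto its image in $\S(\M_Y,\M_X)$. So the first step is to check that this correspondence is a bijection onto all of $\S(\M_Y,\M_X)$.

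Injectivity is Proposition \ref{prop3.07}: a Polish space is separable metrizable, so $T_1^* = T_2^*$ forces $T_1 = T_2$ a.e. Surjectivity is Theorem \ref{theo3.08}. Its hypotheses hold here: both spaces are separable, since a Borel measure on a Polish space has a separable measure algebra (approximate by finite unions of sets from a countable base using regularity and $\s$-finiteness). Both $X$ and $Y$ are uncountable, since nonatomic measures on separable metric spaces force uncountability, as remarked before Proposition \ref{prop3.07}. Finally, $Y$ is Polish.

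Hence the map $T \mapsto T^*$ identifies the space of measure space maps, up to a.e. equality, with the completely metrizable separable space $\S(\M_Y,\M_X)$. The measure algebras $\M_X, \M_Y$ are separable and nonatomic. An atom in $\M_X$ would come from an atom of $\mu$, since a measure algebra atom has a representative that is a measure space atom. The measure algebra maps also require $\mu(X) = \nu(Y)$. If the measures are unequal in total mass, there are no maps and the statement is vacuous, so we may assume equality. Theorem \ref{theo3.06} then applies: $\S_{iso}(\M_Y,\M_X)$ is a dense $G_\d$.

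It remains to identify the preimage of $\S_{iso}$ with the measure space isomorphisms. If $T$ is an isomorphism with inverse $\hat T$, then $T^*$ has inverse $\hat T^*$. Conversely, if $T^*$ is an isomorphism, Corollary \ref{cor3.09}(b) says $T$ is a measure space isomorphism. Hence the isomorphisms form a dense $G_\d$ in a completely metrizable space, i.e. the generic measure space map is invertible.

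The main obstacle is the verification of the hypotheses of Theorem \ref{theo3.08}, in particular separability of the Polish measure spaces and nonatomicity of the algebras. This is routine but relies on regularity of Borel measures on metrizable spaces together with $\s$-finiteness.
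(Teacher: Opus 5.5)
Your proposal is correct and follows the paper's own argument. The paper likewise uses Proposition \ref{prop3.07} and Theorem \ref{theo3.08} to make $T \mapsto T^*$ a bijection (modulo a.e.\ equality) onto $\S(\M_Y,\M_X)$, transfers the topology, and reads the result off Theorem \ref{theo3.06}, with Corollary \ref{cor3.09} identifying measure algebra isomorphisms with measure space isomorphisms; your checks of separability, uncountability, nonatomicity and equal total mass are details the paper leaves implicit.
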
\vspace{.5cm}

 \textbf{Notes:} The measure algebra approach is here adapted from Halmos \cite{H50} and \cite{H56}. The isomorphism result Proposition
 \ref{prop3.03} is described there and is attributed by Royden to Caratheodory (\cite{R} Theorem 15.4). Again we may assume that $Y = X$.
 In \cite{E} Eisner proves that the group of invertible measure space self-maps of a standard measure space is a dense $G_{\d}$ subset of the
 space of measure space self-maps. In fact, the proof of Propostion \ref{prop3.04} is essentially hers.
 Instead of the measure algebra isometry $T^*$ on $\M_X$, Eisner uses the \emph{Koopman operator} on $\L^2(X,\mu)$
 given by $T^*(u) = f \circ T,$ for $ f \in \L^2$ as a proxy for the measure space map. Note that $d'(A,B) = \sqrt{\mu(A + B)}$ is a metric on $\M_X$ uniformly
 equivalent to $d$. The map associating to $A \in \M_X$ the characteristic function $1_A \in \L^2(X,\mu)$ is then an isometry from $\M_X$ into $\L^2(X,\mu)$.
It maps $T^*$ to the restriction of the Koopman operator.

 The results leading to the lifting theorem Theorem \ref{theo3.08}, i.e. to  Proposition 15.19 in \cite{R}, is the work of many hands.
 Royden attributes various results to Sikorski, Kuratowski, von Neumann and Halmos.

 In \cite{Bob} the authors show that the generic continuous map of the interval which preserves Lebesgue measure has measure-theoretic entropy zero.
 A measure space map with entropy zero is invertible a.e., see \cite{W} Corollary 4.14.3.
They have thus shown that the generic continuous measure-preserving  map  of the interval, while wildly non-invertible in the topological sense, is nonetheless invertible a.e. They are using the topology of uniform convergence.  In addition,
 within the space of  measure-preserving  maps  of the interval, with the topology we are using, the continuous maps are dense and so every  measure-preserving  map  of the interval
 can be approximated by a continuous measure preserving map which is invertible a.e. This provides an alternate approach to
 the density portion of Theorem \ref{theo3.10} \vspace{.5cm}

  \section{\textbf{Nonsingular Mappings of Measure Spaces}}\vspace{.25cm}

   For finite measure spaces $(X, \mu)$ and $(Y,  \nu)$ (as before, we leave $\B_X$ and $\B_Y$ understood) a mapping $T : X \to Y$ is a \emph{null-preserving map} when it is a measurable function which
which associates sets of measure zero i.e.
    \begin{align}\label{eq3.01x}\begin{split}
  B \in \B_Y \qquad &\Longrightarrow \qquad T^{-1}(B) \in \B_X \\
  \text{and}  \ \ \nu(B) \ = \ 0 \qquad &\Longrightarrow \qquad \mu(T^{-1}(B)) = 0.
  \end{split}\end{align}
  It is a \emph{nonsingular map} when
   \begin{equation}\label{eq3.01xa}
 \hspace{1cm} \nu(B) \ = \ 0 \qquad \Longleftrightarrow \qquad \mu(T^{-1}(B)) = 0.
  \end{equation}
  The map is measure preserving when the measure $T_*\mu$ defined by $T_* \mu(B) = \mu(T^{-1}(B))$ is equal to $\nu$. The map is
  null-preserving when $T_* \mu$ admits the Radon-Nikodym derivative $\frac{d T_* \mu}{d \nu} : Y \to \R_+$ such that
  $\mu(T^{-1}(B)) \ = \ \int_B  \frac{d T_* \mu}{d \nu} d \nu$. It is nonsingular when the Radon-Nikodym derivative is positive a.e.

  The map $T$ is a \emph{nonsingular isomorphism} when there exists a nonsingular map $\hat T : (Y,\nu) \to (X,\mu)$ such that
  $\hat T \circ T = id_X$ a.e. and $T \circ \hat T = id_Y$ a.e.

  \vspace{.25cm}

     If $T : (X,\mu) \to (Y,\nu)$ is a null-preserving map, then the map from $\B_Y$ to $\B_X$ given by $B \mapsto T^{-1}(B)$
    induces a map $T^* : \M_Y \to \M_X$, which is an example of a measure algebra $\s$-homomorphism. \vspace{.25cm}

    \begin{df}\label{df3.01x} For measure algebras $\M_X$ and $\M_Y$ a mapping
    $S : \M_Y \to \M_X$ is a \emph{measure algebra $\s$-homomorphism} when it satisfies:
    \begin{itemize}
   \item[(i)] $S(0) = 0$.

     \item[(ii)] $S(Y \setminus B) = X \setminus S(B)$ for all $ B \in \M_Y$.

     \item[(iii)] For any sequence $\{ B_n \} \in \M_Y, \ \ S(\bigcup_n B_n) \ = \ \bigcup_n S(B_n)$.
    \end{itemize}
    \end{df}\vspace{.25cm}

      It follows that  $S(Y) = X$ and  $S$ preserves countable intersections and Boolean sums.  If it is bijective, then the inverse mapping
    is clearly a $\s$-homomorphism from $\M_X$ to $\M_Y$ and so $S$ is then an isomorphism in the category of measure algebras and
    $\s$-homomorphisms.

 The translation to measure space results uses the same
   theorems that were described at the end of Section 3. In particular, it is a theorem of Sikorski that a $\s$-homomorphism comes from
   a null-preserving map, see \cite{R} Proposition 15.3 (Proposition 15.10 in the second edition)  \vspace{.25cm}

    \begin{lem}\label{lem3.01x} If a sequence $\{ B_n \}$ in $\M_Y$  converges to $B$, then there exists a subsequence $\{ B_{n_k} \}$
    such that $B \ = \ \bigcap_N \ \bigcup_{k \ge N}  B_{n_k}$, i.e. $B$ is the limsup of the subsequence. \end{lem}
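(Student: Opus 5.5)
We pass to a fast subsequence and apply the Borel--Cantelli lemma to the symmetric differences. Since $d(B_n,B) = \nu(B_n + B) \to 0$, we can choose indices $n_1 < n_2 < \dots$ with $\nu(B_{n_k} + B) < 2^{-k}$. Fix representatives in $\B_Y$ (we are in the finite case of this section, so all sets lie in $\B'_Y$) and put $D_k = B_{n_k} + B$. Then $\sum_k \nu(D_k) < \infty$. Hence the set
\begin{equation*}
D = \bigcap_N \bigcup_{k \ge N} D_k
\end{equation*}
has measure at most $\sum_{k \ge N} 2^{-k} = 2^{1-N}$ for every $N$, and therefore $\nu(D) = 0$.

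Next we compare $L = \bigcap_N \bigcup_{k \ge N} B_{n_k}$ with $B$ pointwise off $D$. Take $y \notin D$. Then there is some $N_0$ such that $y \notin D_k$ for all $k \ge N_0$, that is, $y \in B_{n_k}$ if and only if $y \in B$ for all $k \ge N_0$. If $y \in B$, then $y$ lies in every $B_{n_k}$ with $k \ge N_0$, so $y \in \bigcup_{k \ge N} B_{n_k}$ for every $N$, and thus $y \in L$. If $y \notin B$, then $y \notin \bigcup_{k \ge N_0} B_{n_k}$, so $y \notin L$.

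It follows that $L + B \subset D$, so $\nu(L + B) = 0$ and $L = B$ in $\M_Y$. The class of $L$ does not depend on the chosen representatives, because changing each $B_{n_k}$ by a null set changes $L$ by at most a countable union of null sets. So the limsup is well defined as an element of $\M_Y$, and the statement holds at the measure algebra level.

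There is no real obstacle here. The only point needing care is choosing the subsequence with summable distances: convergence of the full sequence alone does not make $\sum_n \nu(B_n + B)$ finite.
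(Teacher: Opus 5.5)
Your proof is correct and follows essentially the paper's route: pass to a subsequence with $\nu(B_{n_k}+B)<2^{-k}$, then control the tail unions by $\bigcup_{k\ge N}(B_{n_k}+B)$. The paper phrases the conclusion metrically: it shows $d(\bigcup_{k\ge N}B_{n_k},B)<2^{-N+1}$ and uses that a decreasing sequence converges to its intersection, whereas you argue pointwise off the Borel--Cantelli null set $\bigcap_N\bigcup_{k\ge N}(B_{n_k}+B)$; the two arguments are equivalent.
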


    \begin{proof} Inductively, we choose an increasing sequence $\{ n_k \}$ such that $m \ge n_k$ implies $d(B_m, B) < 2^{-k}$.
    Let $\hat B_N = \bigcup_{k \ge N}  B_{n_k}$. Since $\hat B_N  + B \subset \bigcup_{k \ge N}  (B_{n_k} + B)$ it follows that
    $d(\hat B_N, B) < 2^{-N+1}$. Hence, $B$ is the limit of the decreasing sequence $\{ \hat B_N \}$.

    \end{proof} \vspace{.25cm}

    \begin{prop}\label{prop3.02x} A measure algebra $\s$-homomorphism $S : \M_Y \to \M_X$ is uniformly continuous and has a closed image.
     \end{prop}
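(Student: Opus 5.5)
The plan is to work throughout with the fact, noted after Definition \ref{df3.01x}, that a $\s$-homomorphism $S$ preserves Boolean sums and countable unions and intersections. In particular it preserves limsups: for any sequence $\{ B_n \}$ in $\M_Y$,
\begin{equation*}
S\Bigl(\bigcap_N \bigcup_{n \ge N} B_n\Bigr) \ = \ \bigcap_N \bigcup_{n \ge N} S(B_n).
\end{equation*}
Since we are in the finite measure setting of this section, the measures are finite and continuity from above holds. Both claims of the proposition then come from this single observation.

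\emph{Uniform continuity.} Because $S(B_1 + B_2) = S(B_1) + S(B_2)$, we have
\begin{equation*}
d(S(B_1),S(B_2)) \ = \ \mu(S(B_1 + B_2)).
\end{equation*}
So it suffices to show that for every $\ep > 0$ there is $\d > 0$ such that $\nu(B) < \d$ implies $\mu(S(B)) < \ep$. I would argue by contradiction. Suppose there are $\ep > 0$ and $B_n \in \M_Y$ with $\nu(B_n) < 2^{-n}$ but $\mu(S(B_n)) \ge \ep$. Let $C = \bigcap_N \bigcup_{n \ge N} B_n$. By the Borel--Cantelli estimate, $\nu(C) \le \sum_{n \ge N} 2^{-n}$ for every $N$, so $\nu(C) = 0$, i.e. $C = 0$ in $\M_Y$, and hence $S(C) = 0$. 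On the other hand, $S(C) = \bigcap_N \bigcup_{n \ge N} S(B_n)$ is a decreasing intersection of sets each of measure at least $\ep$. Finiteness of $\mu$ then gives $\mu(S(C)) \ge \ep$, which is a contradiction.

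\emph{Closed image.} Let $A \in \M_X$ be a limit of a sequence $\{ S(B_n) \}$ with $B_n \in \M_Y$. The difficulty is that $S$ need not be injective or an isometry, so $\{ B_n \}$ need not be Cauchy, and we cannot simply take a limit in $\M_Y$. This is the step where the earlier lemma enters. Apply Lemma \ref{lem3.01x} in $\M_X$ to obtain a subsequence with
\begin{equation*}
A \ = \ \bigcap_N \bigcup_{k \ge N} S(B_{n_k}).
\end{equation*}
Now define $B = \bigcap_N \bigcup_{k \ge N} B_{n_k} \in \M_Y$. The limsup-preservation displayed above gives $S(B) = A$, so $A$ lies in the image and the image is closed.

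The only point needing care is that these limsups are well defined on measure algebra classes and that countable unions stay in $\M_Y$. Both are automatic here because the measures are finite. I do not expect a genuine obstacle beyond recognizing that the limsup construction replaces the missing Cauchy property in the closed-image argument.
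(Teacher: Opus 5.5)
Your proposal is correct and follows the paper's proof essentially step for step. For uniform continuity you argue by contradiction with $\nu(B_n) < 2^{-n}$ and $\mu(S(B_n)) \ge \ep$, noting that the limsup of the $B_n$ is null while its image has measure at least $\ep$; for the closed image you pass to a subsequence via Lemma \ref{lem3.01x} and use that $S$ commutes with limsups.
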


    \begin{proof} If $S$ were not uniformly continuous, there would exists $\ep > 0$ and a sequence $\{ B_n \}$ in $\M_Y$ with
    $\nu(B_n) < 2^{-n}$ and $\mu(S(B_n)) \ge \ep$. Let $\hat B_k = \bigcup_{i \ge k}  B_{i}$. Since $\nu(\hat B_k) < 2^{-k+1}$ it follows
    that $\bigcap_k \hat B_k = 0$. However, $B_k \subset \hat B_k$ implies that $\{ S(\hat B_k) \}$ is a decreasing sequence in $\M_X$
    with $\mu(S(\hat B_k)) \ge \ep$ for all $k$. Hence, $ \mu(\bigcap_k \ S(\hat B_k)) \ge \ep$ and so $\bigcap_k \ S(\hat B_k) \not= 0 =
     S( \bigcap_k \ \hat B_k)$. This contradicts the assumption that $S$ is a $\s$-homomorphism.

     Now suppose that $A \in \M_X$ and $\{ B_n \}$ is a sequence in $\M_Y$ such that $\{ S(B_n) \}$ converges to $A$. By Lemma \ref{lem3.01x}
     we may assume, by going to a subsequence if necessary, that $A = \limsup S(B_n)$. Let $B = \limsup B_n$. Because $S$ commutes with countable
     operations it follows that $B \in \M_Y$ with $S(B) = \limsup S(B_n) = A$. Notice that we do not claim and do not need that $\{ B_n \}$
     converges to $B$. Nonetheless, we see that $S$ has a closed image.

    \end{proof} \vspace{.25cm}

    For a measure algebra $\s$-homomorphism $S : \M_Y \to \M_X$ we define the \emph{kernel of $S$} by
    \begin{equation}\label{eq3.02ax}
    Ker(S) \ = \ S^{-1}(0) \ = \ \{ A \in \M_X : S(A) = 0 \}
    \end{equation}\vspace{.25cm}
    We call the kernel trivial when it equals $\{ 0 \}$.

\vspace{.25cm}

       \begin{prop}\label{prop3.02xa} For a measure algebra $\s$-homomorphism $S : \M_Y \to \M_X$, the following are equivalent.
       \begin{itemize}
      \item[(a)] The map $S$ is injective.
       \item[(b)] The kernel $Ker(S)$ is trivial.
       \item[(c)] For every $\ep > 0$ there
    exists $\d > 0$ such that for all $B\in \M_Y$
    \begin{equation}\label{eq3.02x1}
   \nu(B) > \ep \quad \Longrightarrow \quad \mu(S(B)) \ge \d.
    \end{equation}
       \item[(d)] For every $\ep > 0$ there
    exists $\d > 0$ such that for all $B_1, B_2 \in \M_Y$
    \begin{equation}\label{eq3.02x2}
    d(B_1,B_2) > \ep \quad \Longrightarrow \quad d(S(B_1),S(B_2)) \ge \d.
    \end{equation}
    \end{itemize}
    The inverse map $S^{-1} : S(\M_Y) \to \M_Y$ is uniformly continuous.
    \end{prop}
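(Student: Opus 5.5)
The plan is to prove the chain (a) $\Rightarrow$ (b) $\Rightarrow$ (c) $\Rightarrow$ (d) $\Rightarrow$ (a), and then to read off the uniform continuity of $S^{-1}$ from (d). Throughout, $S$ preserves complements, countable unions and intersections, and Boolean sums. Hence $S(B_1 + B_2) = S(B_1) + S(B_2)$, which gives
$$d(S(B_1),S(B_2)) = \mu(S(B_1 + B_2)),$$
while $d(B_1,B_2) = \nu(B_1 + B_2)$. This identity reduces the metric statements to statements about single sets.

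The easy steps go as follows.

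\emph{(a) $\Rightarrow$ (b).} Since $S(0) = 0$, injectivity forces $Ker(S) = \{ 0 \}$.

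\emph{(c) $\Rightarrow$ (d).} Apply (c) to $B = B_1 + B_2$ and use the identity above.

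\emph{(d) $\Rightarrow$ (a).} If $B_1 \neq B_2$, then $d(B_1,B_2) > \ep$ for some $\ep > 0$, so $d(S(B_1),S(B_2)) \ge \d > 0$ and $S(B_1) \neq S(B_2)$.

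\emph{Uniform continuity of $S^{-1}$.} Condition (d) is exactly the contrapositive form of uniform continuity of $S^{-1}$ on $S(\M_Y)$: if $d(S(B_1),S(B_2)) < \d$, then $d(B_1,B_2) \le \ep$. The inverse is well defined because (d) gives injectivity.

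The main step is \emph{(b) $\Rightarrow$ (c)}, and I would argue it by contradiction, following the pattern of Lemma \ref{lem3.01x} and Proposition \ref{prop3.02x}. Suppose there are $\ep > 0$ and sets $B_n$ with $\nu(B_n) > \ep$ but $\mu(S(B_n)) < 2^{-n}$. Put $\hat B_N = \bigcup_{n \ge N} B_n$ and $B = \bigcap_N \hat B_N = \limsup B_n$.

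\begin{itemize}
\item Since $\nu$ is finite and the $\hat B_N$ decrease, $\nu(B) = \lim_N \nu(\hat B_N) \ge \ep > 0$, so $B \neq 0$. Finiteness of $\nu$ is used essentially here.
\item On the other side, $S$ commutes with countable operations, so $S(B) = \bigcap_N \bigcup_{n \ge N} S(B_n)$. We have $\mu\bigl(\bigcup_{n \ge N} S(B_n)\bigr) \le 2^{-N+1}$, hence $\mu(S(B)) = 0$, that is, $S(B) = 0$.
\end{itemize}

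Thus $B$ is a nonzero element of $Ker(S)$, contradicting (b). This Borel–Cantelli-type step is the only place where the $\s$-homomorphism property and the finiteness of the measures matter. A merely finitely additive homomorphism could have trivial kernel without satisfying (c), so this is where I expect the real content to lie.
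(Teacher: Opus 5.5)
Your proof is correct and is essentially the paper's. The decisive step (b) $\Rightarrow$ (c) is the same $\limsup$/Borel--Cantelli argument with $\hat B_N = \bigcup_{n \ge N} B_n$, and you only arrange the easy implications as the cycle (a) $\Rightarrow$ (b) $\Rightarrow$ (c) $\Rightarrow$ (d) $\Rightarrow$ (a) instead of the paper's (a) $\Leftrightarrow$ (b), (c) $\Leftrightarrow$ (d), (c) $\Rightarrow$ (b).

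Your closing aside is wrong, though it plays no role in the proof. For finite measure algebras, even a merely finitely additive homomorphism with trivial kernel satisfies (c). A purely finitely additive charge vanishes on some nonzero element below any nonzero $B_0$, so in the Yosida--Hewitt decomposition of the strictly positive charge $\mu \circ S$ the countably additive part must itself be strictly positive, and $\nu$ is absolutely continuous with respect to it. The $\s$-property therefore buys a short proof, not the truth of the implication.
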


    \begin{proof} (a) $\Leftrightarrow$ (b): $S(B_1) = S(B_2)$ if and only if $S(B_1 + B_2) = S(B_1) + S(B_2) = 0$.

    (c) $\Leftrightarrow$ (d) : $d(B_1,B_2) = \nu(B_1 + B_2)$ and $d(S(B_1),S(B_2)) = \mu(S(B_1 + B_2)) = \mu(S(B_1) + S(B_2))$.

    (c) $ \Rightarrow$ (b) : The contrapositive of (\ref{eq3.02x1}) clearly implies (b).

    (b) $\Rightarrow$ (c) : We mimic the proof of the first part of Proposition \ref{prop3.02x}. If (\ref{eq3.02x1}) fails,
    then there exists $\ep > 0$ and a sequence $\{ B_n \}$ in $\M_Y$ with
    $\nu(B_n) \ge \ep$ and $\mu(S(B_n)) < 2^{-n}$. Let $\hat B_k = \bigcup_{i \ge k}  B_{i}$. Since $\mu(S(\hat B_k)) < 2^{-k+1}$ it follows
    that $S(\bigcap_k \hat B_k) = 0$. However, $B_k \subset \hat B_k$ implies that $\{\hat B_k \}$ is a decreasing sequence in $\M_Y$
    with $\nu(\hat B_k) \ge \ep$ for all $k$ and so $\bigcap_k \hat B_k$ is a non-trivial element of the kernel.

    Uniform continuity of $S^{-1} : S(\M_Y) \to \M_Y$ is the contrapositive of (\ref{eq3.02x2}).

    \end{proof} \vspace{.25cm}

    Clearly, a null-preserving map $T$ is nonsingular exactly when $T^*$ is injective.\vspace{.25cm}

    \begin{prop}\label{prop3.02xb}  Assume $S : \M_Y \to \M_X$ is a measure algebra $\s$-homomorphism with $\M_Y$ separable.
    If the kernel of $S$ is non-trivial, then there exists $K \in \M_Y$ such that $Ker(S) = \M_K$. We call $K$ the \emph{zero-set} for $S$
    and denote it complement $Y \setminus K$ by $K'$.The restriction $S : \M_{K'} \to \M_X$ is injective with $S(\M_{K'}) = S(\M_Y)$.
    In particular, if $S$ is surjective, then the restriction $S : \M_{K'} \to \M_X$ is a $\s$-isomorphism. \end{prop}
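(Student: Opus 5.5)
The plan is to produce $K$ as the largest element of the kernel, obtained as a supremum. Note first that $Ker(S)$ is a subset of $\M_Y$, and that the definition (\ref{eq3.02ax}) should read $Ker(S) = \{ B \in \M_Y : S(B) = 0 \}$. Three properties of the kernel will be used. It is closed under countable unions, by (iii) of Definition \ref{df3.01x}. It is hereditary: if $B_1 \subset B \in Ker(S)$, then $S(B_1) = S(B_1 \cap B) = S(B_1) \cap S(B) = 0$. It is closed in $\M_Y$, since $S$ is continuous by Proposition \ref{prop3.02x}.

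Since $Y$ is finite here, let $s = \sup \{ \nu(B) : B \in Ker(S) \} \le \nu(Y)$. Choose $B_n \in Ker(S)$ with $\nu(B_n) \to s$ and put $K = \bigcup_n B_n$. Then $K \in Ker(S)$ and $\nu(K) = s$. Separability of $\M_Y$ is not really needed for this choice, though one could alternatively take $K$ as the union of a countable dense subset of $Ker(S)$. Maximality follows by a standard argument: if $B \in Ker(S)$, then $K \cup B \in Ker(S)$, so $\nu(K \cup B) \le s = \nu(K)$, hence $\nu(B \setminus K) = 0$ and $B \subset K$. Together with heredity this gives $Ker(S) = \M_K$. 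Since the kernel is assumed nontrivial, $K \ne 0$.

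For the restriction to $K' = Y \setminus K$, regard $\M_{K'}$ as a measure algebra with unit $K'$. The map $S|\M_{K'}$ still preserves countable unions and $0$, and complements relative to $K'$ go to complements relative to $S(K') = X \setminus S(K) = X$. So it is a $\s$-homomorphism into $\M_X$. Its kernel is $\M_{K'} \cap \M_K = \{0\}$, so it is injective by Proposition \ref{prop3.02xa}. For the image, any $B \in \M_Y$ satisfies
\[
S(B) = S(B \cap K') \cup S(B \cap K) = S(B \cap K'),
\]
hence $S(\M_{K'}) = S(\M_Y)$. If $S$ is surjective, the restriction is a bijective $\s$-homomorphism, and its inverse is a $\s$-homomorphism by the remark after Definition \ref{df3.01x}, so the restriction is a $\s$-isomorphism.

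The only delicate point is the existence of a maximal element of the kernel, which is handled by the sup-of-measures argument above using finiteness of $\nu$. In the $\s$-finite setting one would instead weight by an equivalent finite measure.
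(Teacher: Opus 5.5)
Your proof is correct. It differs from the paper's only in how you produce the maximal element $K$ of the kernel.

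The paper uses the separability hypothesis directly. It takes a dense sequence $\{K_i\}$ in $Ker(S)$ and sets $K = \bigcup_i K_i$, which lies in the kernel because $S$ preserves countable unions. It then shows that every $B \in Ker(S)$ satisfies $B \subset K$: since $B \setminus K \subset B + K_i$ and $\nu(B + K_i)$ can be made arbitrarily small, $\nu(B \setminus K) = 0$. This is exactly the alternative you mention in passing.

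Your exhaustion argument takes kernel elements whose measures approach $s = \sup\{\nu(B) : B \in Ker(S)\}$. It relies on the finiteness of $\nu$, which is the standing assumption of Section 4, and in exchange it shows that the separability hypothesis is not actually needed for this proposition. The paper's version needs no supremum of measures, only density in the metric space $\M_Y$, which fits the metric viewpoint used throughout the paper.

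The remaining steps coincide with the paper's:
\begin{itemize}
\item $S(B) = S(B \cap K')$ for every $B \in \M_Y$;
\item the restriction is injective because its kernel is $\M_{K'} \cap \M_K = \{0\}$;
\item when $S$ is surjective, the inverse of the restriction is a $\sigma$-homomorphism.
\end{itemize}
You are also right that the displayed definition of $Ker(S)$ in the paper should read $\{B \in \M_Y : S(B) = 0\}$.
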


    \begin{proof} Because $\M_Y$ is a separable metric space, the subset $Ker(S)$ contains a dense sequence $\{ K_i \}$. From continuity
    of $S, \ \ K = \bigcup_i K_i \in Ker(S)$. So if $B \subset K$, then $B \in Ker(S)$. Conversely, if $B \in Ker(S)$ and $\ep > 0$, then there exists
  $K_i$ in the sequence with $\mu(B + K_i) < \ep.$ Since $B \setminus K \subset B + K_i$ and $\ep$ is arbitrary, it follows that $\mu(B \setminus K) = 0$
  and so $B \subset K$.

    For any $B \in \M_Y$, $B = (B \cap K') \cup (B \cap K)$ and
    $B \cap K \in Ker(S)$. Hence, $S(B \cap K) = 0$ and so $S(B) = S(B \cap K')$. Thus,  $S(\M_{K'}) = S(\M_Y)$. If $B \subset K'$ and
    $B \not= 0$, then $B \not\in Ker(S)$ and so $S(B) \not= 0$. Hence, the restriction $S : \M_{K'} \to \M_X$  is injective.

    \end{proof} \vspace{.25cm}

  For measure algebras $\M_Y$ and $\M_X$ let $\Sigma(\M_Y,\M_X)$ be the set of measure algebra $\s$-homomorphisms from $\M_Y$ to $\M_X$ and let
  $\Sigma_{sur}(\M_Y,\M_X)$ and $\Sigma_{iso}(\M_Y,\M_X)$
  be the subset of surjections and isomorphisms, respectively.

  The coarse topology that we used for measure algebra maps in the previous section was that
  of pointwise convergence which identifies $\Sigma$ with a subset of
  $\M_X^{\M_Y}$ equipped with the product topology.
  Thus, a net $\{S_i \}$ converges
  to $S$ when $\{ S_i(B) \}$ converges to $S(B)$ in $\M_X$ for all $B \in \M_Y$.
  Now, however, the topology does not appear to be metrizable as
  pointwise convergence on a dense countable subalgebra may not be enough to assure convergence. Furthermore,
 if $S$ is a limit point for $\Sigma$ in the product, then it is a map
  from $\M_Y$ to $\M_X$ which preserves the finite Boolean operations, but it lies in $\Sigma$  if and only if it preserves countably infinite unions,
  or, equivalently via   Lemma  \ref{lem3.01x},
  if and only if it is continuous. Thus, the pointwise limit might not be a $\s$-homomorphism.

  To avoid the problems associated with the coarse topology, we turn back to the set of nonsingular
  maps with $(X,\mu)$ and $(Y,\nu)$ finite Polish measure spaces
  and consider their representation by $L^1$ isometries.

  Given a nonsingular map $T : (X,\mu) \to (Y,\nu)$ it is observed in \cite{KK} that the map $I_T : L^1(Y,\nu) \to L^1(X,\mu)$ defined by
   \begin{equation}\label{eq3.03xxa}
    I_T(f) \ = \ (f \circ T) \cdot (\frac{d\nu}{d T_*\mu} \circ T)
    \end{equation}
    is an isometry, because
     \begin{equation}\label{eq3.03xxb}
     \int \ (f \circ T) \cdot (\frac{d\nu}{d T_*\mu} \circ T) \ d \mu \ = \ \int \ f  \cdot \frac{d\nu}{d T_*\mu} \ d T_* \mu \ = \ \int \ f \ d \nu.
     \end{equation}

     It is a theorem of Lamperti, see \cite{L}, or \cite{R} Theorem 15.24 (Theorem 15.16 in the second edition), that every isometry $I : L^1(Y,\nu) \to L^1(X,\mu)$ is given by
     \begin{equation}\label{eq3.03xxc}
    I(f) \ = \ (f \circ T) \cdot h.
    \end{equation}
    Clearly, $h = I(1_Y)$ is uniquely determined in $L^1(X,\mu)$ by $I$. The measurable map $T$ is uniquely determined on the set $h \not= 0$
      (see the proof of Proposition \ref{prop3.07}).
    If $h \not= 0$ a.e., then $T$ is a nonsingular map.

    Note that if the isometry $J :  L^1(X,\mu) \to L^1(Z,\th)$ is given by $J(g) = (g \circ R) \cdot k$, then $J \circ I$
    is the isometry given by
    \begin{equation}\label{eq3.03xxcc}
    J(I(f)) = f \circ (T \circ R) \cdot (h \circ R) \cdot k.
    \end{equation}
     In particular, if $I$ is surjective and
    so is an isomorphism, then with $J = I^{-1}$ we see that $T \circ R = id_X$ and $(h \circ R) \cdot k = 1$. Since $I \circ J$ is the
    identity as well, it follows that $T$ is a nonsingular isomorphism with inverse $R$.

    Notice that if $h$ is of the form $h_1 \circ T$, then
      \begin{equation}\label{eq3.03xxd}
      \int \ f  \cdot \frac{d\nu}{d T_*\mu} \  d T_* \mu \ = \  \int \ f \cdot h_1  \ d T_* \mu.
      \end{equation}
      Since the measure $T_* \mu$ is equivalent to $\nu$, it follows that $h_1 = \frac{d\nu}{d T_*\mu}$ and so
      $h = \frac{d\nu}{d T_*\mu} \circ T$. In particular, if $T$ is a nonsingular isomorphism, then $h = h_1 \circ T$ with
      $h_1 = h \circ T^{-1}$. That is, for a nonsingular isomorphism $T$, the isometry $I_T$ of (\ref{eq3.03xxa} is the unique isometry with
      associated nonsingular map $T$ and it is an isometric isomorphism with $I_T^{-1}(g) = (g \circ T^{-1}) \cdot (1/h \circ T^{-1}).$

Suppose instead that $I$ is an isometry which is not surjective.  In that case, the image is a proper closed subspace of $L^1(X, \mu)$ and so
 there exists a non-zero $q \in L^{\infty}(X, \mu)$ such that $\int \ I(f) \cdot q \ d\mu = 0$ for all $f$. Normalize so that $|| q ||_{\infty} = 1$.
 For every $\ep \not= 0, \ I_{\ep}(f) = (f \circ T) \cdot h \cdot (1 + \ep q)$ is an isometry distinct from $I$.
This includes examples with $h$ negative on sets of positive measure.  On the other hand, assume that $h > 0$ a.e. If $|\ep| < 1$,
 then $h \cdot (1 + \ep q) > 0 $ a.e. while if $|\ep| = 1$, then $h \cdot (1 + \ep q) \ge 0 $  a.e. but may be $0$ on a
 set of positive measure.\vspace{.25cm}

 For example, let $T$ be the tent map on $[0,1]$ so that $I_T$ given by
 $f \mapsto f \circ T$ is an isometry, i.e. $h = 1$, because $T$ is measure preserving.
 Let  $q(t) = 2t - 1$ so that $\int_{T^{-1}E} q \ d\lm = 0$ for all $E$.
 Alternatively, for $A$ any subset of $[0,\frac{1}{2})$ with positive measure, let
  \begin{equation}\label{eq3.03xxdd}
  q(t) = \begin{cases} -1 \ \text{for}\ t \in A, \\ 1 \ \text{for}\ 1-t \in A,\\ 0 \ \text{otherwise}.\end{cases}.
     \end{equation}
In either case, for every $\ep \not= 0$,
 $I_{\ep} = (f \circ T)\cdot (1 + \ep q)$ is an isometry distinct from $I_T$.\vspace{.25cm}

 We define  $\J(L^1(Y,\nu),L^1(X,\mu))$  to be the set of isometries with $h \ge 0$ a.e. These are the \emph{positive isometries} such that
 $f \ge 0$ in $L^1(Y, \nu)$ implies $I(f) \ge 0$ in $L^1(X,\mu)$.

 For any $A \in \B_X$, the condition $\int \ 1_A h \ d \mu < 0$ is an open condition on $h \in L^1(\mu)$. Consequently, $\J(L^1(Y,\nu),L^1(X,\mu))$
 is a closed subset of the completely
 metrizable space of isometries with the strong topology such that $\{ I_i \}$ converges to $I$ when $\{ I_i(f) \}$ converges to
 $I(f)$ in $L^1(X,\mu)$ for all $f \in  L^1(Y,\nu)$. If  $\{ E_n \}$ is a dense sequence in $\M_Y$ then, since simple functions are dense in $L^1$,  we may use the metric
  \begin{equation}\label{eq3.03xxe}
  d(I_1,I_2) \ = \ \max_n \  \{ \frac{1}{n}|| I_1(1_{E_n}) - I_2(1_{E_n}) ||_1 \}
  \end{equation}

 Now if $I(f) = (f \circ T) \cdot h$ with $h \ge 0$ but $0$ on a set of positive measure, then, if necessary, we may redefine $T$ on the set
 $Z =  \{ h = 0 \}$ so that $T$ is nonsingular.  For example, we may choose $T$ to be an isomorphism of the normalization of $(Z,\mu|Z)$ to
 the normalization of  $(Y,\nu)$. With $T$ nonsingular,  the isometries $I_{\ep}(f) = (f \circ T) \cdot h_{\ep}$ with
 $h_{\ep} \ = \ (1 - \ep)h + \ep (\frac{d\nu}{d T_*\mu} \circ T)$ and $0 < \ep < 1$ all have $h_{\ep} > 0$. For $f = 1_{E_n}$ we have that
 $||I_{\ep}(f) - I(f)||_1\le \ep ||h - (\frac{d\nu}{d T_*\mu} \circ T)||_1.$ Hence, $I$ is the limit of the isometries $I_{\ep}$.
 This shows that the set of isometries with $h > 0$ a.e. is dense in $\J(L^1(Y,\nu),L^1(X,\mu))$.

  We define $\J_{iso}(L^1(Y,\nu),L^1(X,\mu))$ to be the set of isometric isomorphisms in $\J(L^1(Y,\nu),L^1(X,\mu))$.\vspace{.25cm}

  \begin{prop}\label{prop3.03xa}($\mathbf{G_{\d}}$) With $(X, \mu)$ and $(Y, \nu)$ Polish measure spaces, the
    set $\J_{iso}(L^1(Y,\nu),L^1(X,\mu))$ of isometric isomorphisms is a $G_{\d}$
  subset of $\J(L^1(Y,\nu),L^1(X,\mu))$.\end{prop}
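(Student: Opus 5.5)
Since every element of $\J$ is an isometry, its range is a closed subspace of $L^1(X,\mu)$. So such an $I$ is an isometric isomorphism exactly when it is surjective, i.e. when its range is dense. This is the same reduction used in Proposition \ref{prop3.04}. Fix a countable dense set $\{ g_i \}$ in $L^1(X,\mu)$; for instance, take the indicator functions $1_{A_i}$ of a dense sequence in $\M_X$, whose rational linear combinations are dense since simple functions are dense. Then $I$ fails to be surjective precisely when, for some $i$ and some positive integer $k$,
\[
\| g_i - I(f) \|_1 \ \ge \ 1/k \qquad \text{for all} \ f \in L^1(Y,\nu).
\]
Define $\J_{ik}$ to be the set of $I \in \J$ with this property. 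This gives $\J \setminus \J_{iso} = \bigcup_{i,k} \J_{ik}$.

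It remains to show that each $\J_{ik}$ is closed in $\J$ with respect to the strong topology, metrized by (\ref{eq3.03xxe}). Its complement consists of those $I$ for which some $f$ gives $\|g_i - I(f)\|_1 < 1/k$. For fixed $f$, the map $I \mapsto I(f)$ is continuous in the strong topology, by the very definition of that topology. Hence $\{ I : \|g_i - I(f)\|_1 < 1/k \}$ is open, and the complement of $\J_{ik}$ is a union of open sets, so $\J_{ik}$ is closed.

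There is one point to verify. The metric (\ref{eq3.03xxe}) only controls convergence on the indicators $1_{E_n}$, so we must check that it really induces pointwise convergence on all of $L^1(Y,\nu)$. This follows because all the maps are isometries, hence uniformly $1$-Lipschitz, and because linear combinations of the $1_{E_n}$ are dense. So we may work directly with the strong topology.

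With that in hand, $\J \setminus \J_{iso}$ is an $F_\s$ and $\J_{iso}$ is a $G_\d$ in $\J$. I expect no step to be a real obstacle. The only thing needing care is the fact that an isometric linear map has closed range, which holds because its image is complete.
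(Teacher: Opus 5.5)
Your proposal is correct and follows essentially the same route as the paper. In both, closed range reduces isomorphism to surjectivity, and the non-surjective isometries form the countable union of the closed sets $\{ I : \|g - I(f)\|_1 \ge 1/k \ \text{for all} \ f \}$. The paper takes $g$ to range over the indicators $1_{E_n}$ of a dense sequence in $\M_X$, which suffices because the range is a closed linear subspace, and it leaves implicit the closedness check that you spell out.
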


 \begin{proof} Each isometry $I$ has a closed image and is an isomorphism if and only if it is surjective.
If $I$ fails to be surjective, then some $1_{E_n}$ is not in the image where $\{ E_n \}$ is a
 dense sequence in $\M_X$. As in the proof of \ref{prop3.04} the set
 \begin{equation}\label{eq3.05xxa}
\S_{nk} = \{ I : ||1_{E_n} - I(f) ||_1 \ge 1/k \ \text{for all} \ f \in L^1(Y,\nu) \}.
\end{equation}
is closed in $\J(L^1(Y,\nu),L^1(X,\mu))$. The set of isomorphisms \\ $\J_{iso}(L^1(Y,\nu),L^1(X,\mu))$ is the complement
of the union and so is a $G_{\d}$ set.

 \end{proof}\vspace{.25cm}

  \begin{prop}\label{prop3.03xb}(Density) With $(X, \mu)$ and $(Y, \nu)$ Polish measure spaces, if $I(f) = (f \circ T) \cdot h$ is an isometry
  in $\J(L^1(Y,\nu),L^1(X,\mu))$ with $h > 0$ a.e., then
  there exists a sequence $T_n : (X,\mu) \to (Y,\nu)$ of nonsingular isomorphisms such that $\{ T_n^* \}$ converges pointwise
   to $T^*$ and $I_n(f) = (f \circ T_n) \cdot h$ defines a sequence of isometric isomorphisms which converges strongly to $I$.

    The set $\J_{iso}(L^1(Y,\nu),L^1(X,\mu))$ of isometric isomorphisms is a dense
  subset of $\J(L^1(Y,\nu),L^1(X,\mu))$.\end{prop}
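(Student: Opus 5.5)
The plan is to change the measure on $X$ so that $T$ becomes measure preserving, and then to reuse Proposition \ref{prop3.05} together with the lifting results at the end of Section 3. Throughout I take $(X,\mu)$ and $(Y,\nu)$ to be nonatomic, as in Section 3. Without this, Proposition \ref{prop3.05} does not apply, and the uniqueness it depends on fails.

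\textbf{Step 1: the reweighted measure.} Define $\mu_h = h\,\mu$ on $X$. For $f = 1_B$ with $B \in \B_Y$, the isometry property of $I$ with $h \ge 0$ gives
$$\mu_h(T^{-1}(B)) = \int (1_B \circ T)\, h \, d\mu = \|I(1_B)\|_1 = \nu(B).$$
So $T : (X,\mu_h) \to (Y,\nu)$ is a measure space map in the sense of Section 3. Since $h > 0$ a.e., the measures $\mu_h$ and $\mu$ have the same null sets. Hence $\mu_h$ is a finite, nonatomic Borel measure on the Polish space $X$ with total mass $\nu(Y)$. The same computation shows $T$ is nonsingular for $\mu$: $\nu(B) = 0$ if and only if $\mu_h(T^{-1}B) = 0$, if and only if $\mu(T^{-1}B) = 0$.

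\textbf{Step 2: approximating isomorphisms.} Fix a dense sequence $\{E_i\}$ in $\M_Y$. Apply Proposition \ref{prop3.05} to $T^* : \M_Y \to \M_{(X,\mu_h)}$ to get measure algebra isomorphisms $S_n$ with $S_n(E_i) = T^*(E_i)$ for $i \le n$. By Corollary \ref{cor3.09}(a), each $S_n$ lifts to a measure space isomorphism $T_n : (X,\mu_h) \to (Y,\nu)$ with $T_n^* = S_n$. Because $\mu_h$ and $\mu$ are equivalent, each $T_n$ is also a nonsingular isomorphism of $(X,\mu)$ onto $(Y,\nu)$.

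\textbf{Step 3: the isometries $I_n$.} Set $I_n(f) = (f\circ T_n)\cdot h$. Then
$$\|I_n f\|_1 = \int |f|\circ T_n \, d\mu_h = \int |f|\, d\nu,$$
so $I_n$ is an isometry with positive weight, i.e. it lies in $\J$. It is surjective: given $g \in L^1(X,\mu)$, the function $g/h$ lies in $L^1(X,\mu_h)$, so $f = (g/h)\circ T_n^{-1}$ lies in $L^1(Y,\nu)$ and $I_n f = g$. For convergence, when $n \ge i$,
$$\|I_n(1_{E_i}) - I(1_{E_i})\|_1 = \mu_h\bigl(T_n^{-1}E_i + T^{-1}E_i\bigr) = 0.$$
By the metric (\ref{eq3.03xxe}), this gives $I_n \to I$ strongly.

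\textbf{Step 4: pointwise convergence of $T_n^*$.} This is the step I expect to need the most care, because the conclusion is stated with respect to $\mu$, not $\mu_h$. With respect to $\mu_h$, $T_n^* \to T^*$ pointwise on all of $\M_Y$. The reason is that these maps are $\mu_h$-isometries that agree eventually on the dense set $\{E_i\}$. To pass to $\mu$, I would use absolute continuity of $\mu$ with respect to $\mu_h$, which holds since $h > 0$ and $\mu$ is finite. This gives: for every $\ep > 0$ there is $\d > 0$ such that $\mu_h(A) < \d$ implies $\mu(A) < \ep$. So $\mu_h$-convergence of $T_n^{-1}(B)$ to $T^{-1}(B)$ implies $\mu$-convergence.

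\textbf{Step 5: density.} Combining Steps 2 and 3 with the earlier observation that isometries with $h > 0$ a.e. are dense in $\J(L^1(Y,\nu),L^1(X,\mu))$ gives the density of $\J_{iso}$. A diagonal argument in the metric (\ref{eq3.03xxe}) finishes the proof.
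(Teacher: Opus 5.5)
Your proposal is correct and follows essentially the paper's route: reweight to $\mu_h = h\,\mu$ so that $T$ becomes measure preserving, approximate $T$ by measure space isomorphisms $(X,\mu_h) \to (Y,\nu)$ via the Section 3 results, and transfer convergence back to $\mu$ by absolute continuity. The differences are only presentational: the paper invokes the uniform continuity of the identity $\s$-homomorphism (Proposition \ref{prop3.02x}) where you invoke absolute continuity directly, and it factors $I = I_1 \circ I_2$ where you check surjectivity of $I_n$ by hand. The nonatomicity you assume explicitly is also needed, implicitly, in the paper's appeal to its measure space results.
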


  \begin{proof}
We define $\mu_T(A)  = \int_A h \ d\mu$, obtaining a measure on $X$ with Radon-Nikodym derivative $\frac{d \mu_T}{d \mu} = h$. In addition,
 $T_* \mu_T = \nu$ because for all $B \in \B_Y$,
  \begin{equation}\label{eq3.05xxb}\begin{split}
  \int \ 1_B  \ d T_* \mu_T \ = \ \int \ (1_B \circ T) \ d \mu_T \ = \hspace{1cm}\\
   \int \ (1_B \circ T) \cdot h \ d \mu \ = \ \int I(1_B) \ d\mu  \ = \ \int 1_B \ d\nu.
  \end{split}\end{equation}

 If $T$ is an isomorphism, then the unique
 measure $\mu_T $ with $T_* \mu_T = \nu$ is $\nu \circ T = (T^{-1})_* \nu$. However, if $T$ is not an isomorphism, then
 different $h$'s can be used to get isometries and these lead to different measures $\mu_T$.

 We thus factor $T : (X,\mu) \to (Y,\nu)$ as the composition of
 the non-singular isomorphism $T_1 = id : (X,\mu) \to (X,\mu_T)$ with the measure space map (i.e. measure preserving map)
 $T_2 : (X,\mu_T) \to (Y,\nu).$ (which is just $T$ with a different measure). Thus, $T = T_2 \circ T_1$.

 This factors the isometry $I : L^1(Y,\nu) \to L^1(X,\mu)$ by the isometries $I_2 : L^1(Y,\nu) \to L^1(X,\mu_T)$ given by $ I_2(f) = f \circ T_2$
 and $I_1 : L^1(X,\mu_T) \to L^1(X,\mu)$ given by $I_1(g) = g \cdot h$. We have
  \begin{equation}\label{eq3.05xxc}\begin{split}
  \int \ I_2(f) \ d \mu_T = \int \ f \circ T \ d \mu_T = \int \ f \ d T_* \mu_T = \int \ f \ d \nu,\\
  \text{and} \qquad \int \ I_1(g) \ d\mu = \int \ g \cdot h \ d\mu = \int \ g \ d \mu_T, \hspace{1cm}
   \end{split}\end{equation}
with $I = I_1 \circ I_2$.

 By our measure space results, there exists a sequence of $T_n : (X,\mu_T) \to (Y,\nu)$ of measure space isomorphisms such that
 $T_n^*(E)$ converges to $T_2^*(E)$ for every Borel set $E$.  That is $\int 1_E \circ T_n \ d\mu_T = \mu_T(T_n^{-1}(E))$ converges to
 $\int 1_E \circ T_2 \ d\mu_T = \mu_T(T^{-1}(E)) $
 for every $E \in \B_Y$. Because $T_1$ is a nonsingular isomorphism, it follows from Proposition \ref{prop3.02x} that
 $\int 1_E \circ T_n \ d\mu = \mu(T_n^{-1}(E))$ converges to $\int 1_E \circ T_2 \ d\mu = \int 1_E \circ T \ d\mu =  \mu(T^{-1}(E))$. Thus, $\{ T_n \circ T_1 \}$ is a sequence of
 nonsingular isomorphisms converging pointwise to $T$.

  With $I_n : L^1(Y,\nu) \to L^1(X,\mu_T)$ given by $ I_n(f) = f \circ T_n$ we have that $I_n$ converges strongly to
 $I_2$, i.e. we have convergence with respect to the metric given by equation (\ref{eq3.03xxe}). Since $I_1$ is an isometry,
 it follows that $I_1 \circ I_n$ converges strongly to $I_1 \circ I_2 = I$. The composition
 $(I_1 \circ I_n)(f) = f \circ T_n \cdot h$. Because each $T_n$ is a measure space isomorphism, each $I_n$ is an isometric isomorphism.
 Also $I_1$ is an isometric isomorphism and so the compositions $I_1 \circ I_n$ are isomorphisms.

  As noted above the set of isometries with $h > 0$ is dense in \\ $\J(L^1(Y,\nu),L^1(X,\mu))$ and so the last statement follows.

 \end{proof} \vspace{.25cm}

 Thus, we have:

   \begin{theo}\label{theo3.03xc} Let $(X, \mu)$ and $(Y, \nu)$ be Polish measure spaces.
   The set $\J_{iso}(L^1(Y,\nu),L^1(X,\mu))$ of isometric $L^1$ isomorphisms is a dense, $G_{\d}$ subset of the
 completely metrizable space $\J(L^1(Y,\nu),L^1(X,\mu))$ of positive isometries,  i.e. those isometries with
    $I(1_Y) \ge 0$. That is, the generic positive isometry is invertible. \end{theo}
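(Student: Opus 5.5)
The plan is to assemble Theorem \ref{theo3.03xc} from the pieces already established, following the three-step pattern of the earlier sections.

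\textbf{Complete metrizability.} The space of all linear isometries $L^1(Y,\nu) \to L^1(X,\mu)$ with the strong topology is completely metrizable. The metric (\ref{eq3.03xxe}) is built on indicators of a dense sequence $\{E_n\}$ in $\M_Y$. Since simple functions are dense and all maps are $1$-Lipschitz, a Cauchy sequence of isometries converges on each $1_{E_n}$. Hence it converges on the dense linear span, and therefore on all of $L^1(Y,\nu)$. The limit is linear and norm preserving, so it is again an isometry. Positivity is the condition $h = I(1_Y) \ge 0$, which says $\int 1_A h \, d\mu \ge 0$ for every $A$. Each such condition is closed, so $\J(L^1(Y,\nu),L^1(X,\mu))$ is a closed subset and is itself completely metrizable.

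\textbf{The $G_{\d}$ and density steps.} Proposition \ref{prop3.03xa} shows that $\J_{iso}$ is a $G_{\d}$. The one point to check is that each $\S_{nk}$ is closed in $\J$. If $I_i \to I$ strongly and $\|1_{E_n} - I(f)\|_1 < 1/k$ for some $f$, then $\|1_{E_n} - I_i(f)\|_1 < 1/k$ for large $i$. So the complement of $\S_{nk}$ is open. Density then takes two moves. First, isometries with $h>0$ a.e.\ are dense in $\J$; this is the convex-combination argument with $h_\ep = (1-\ep)h + \ep\,(\frac{d\nu}{dT_*\mu}\circ T)$ given just before Proposition \ref{prop3.03xa}. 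Second, each such isometry is a strong limit of isometric isomorphisms, by Proposition \ref{prop3.03xb}. Combining these, $\J_{iso}$ is dense. A dense $G_{\d}$ in a completely metrizable space is residual by Baire, so the generic positive isometry is invertible.

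\textbf{Main obstacle.} The delicate step is inside Proposition \ref{prop3.03xb}: approximating the measure preserving part $T_2 : (X,\mu_T) \to (Y,\nu)$ by isomorphisms. This needs Theorem \ref{theo3.10}, and so requires $\mu_T$ and $\nu$ to be nonatomic Polish measures. The statement as written does not list nonatomicity, so I would check that it is implicitly assumed as in Section 3, or else add it. Given that, pointwise convergence $T_n^* \to T_2^*$ on $\M_Y$ yields convergence of $1_E\circ T_n$ in $L^1(\mu_T)$. Applying the isometric isomorphism $I_1(g) = g\cdot h$ transports this to $L^1(\mu)$, giving strong convergence on the $1_{E_n}$. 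That is exactly convergence in the metric (\ref{eq3.03xxe}).
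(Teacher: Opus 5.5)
Your proposal is correct and follows the paper's own assembly of the theorem. The pieces match the paper: closedness of $\J$ in the completely metrizable space of isometries, the $G_{\d}$ property via the closed sets $\S_{nk}$ of Proposition \ref{prop3.03xa}, and density via the convex-combination reduction to $h>0$ a.e.\ followed by the factorization $I = I_1 \circ I_2$ through $\mu_T$ in Proposition \ref{prop3.03xb}.

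Your caveat is also well taken. The density step genuinely needs $\mu$ and $\nu$ nonatomic, both for redefining $T$ on $\{h=0\}$ and for the appeal to Theorem \ref{theo3.10}, and the statement leaves this implicit. Without it the conclusion can fail: for $Y$ a single atom and $X=[0,1]$ with Lebesgue measure, $\J \ne \emptyset$ but $\J_{iso} = \emptyset$.
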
\vspace{.25cm}

     \begin{cor}\label{cor3.03xbb} With $(X, \mu)$ and $(Y, \nu)$ Polish measure spaces,if $T : (X,\mu) \to (Y,\nu)$ is a null-preserving map,
      there exists a sequence $T_n : (X,\mu) \to (Y,\nu)$ of null-preserving maps such that $\{ T_n^* \}$ is a sequence of surjective
      $\s$-homomorphisms converging pointwise    to $T^*$.\end{cor}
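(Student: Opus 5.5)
The plan is to reduce to the nonsingular case handled by Proposition \ref{prop3.03xb}, using the zero-set from Proposition \ref{prop3.02xb} to discard the part of $Y$ that $T$ does not see. As in the preceding results, I take the measure spaces to be finite and nonatomic, so that the measure-space density results of Section 3 apply.

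\textbf{Step 1 (remove the zero-set).} Let $S = T^*$, a $\s$-homomorphism $\M_Y \to \M_X$. If $Ker(S)$ is trivial, put $K = 0$. Otherwise Proposition \ref{prop3.02xb} gives $K$ with $Ker(S) = \M_K$. Let $K' = Y \setminus K$. Since $\mu(T^{-1}(K)) = 0$ and $\mu(X) > 0$, we have $\nu(K') > 0$, and after modifying $T$ on a null set we may assume $T(X) \subset K'$.

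By the regularity remark at the end of Section 3, we may replace $K'$ by a $G_{\d}$ set of full measure in it. Then $(K', \nu|K')$ is a finite, nonatomic Polish measure space. By the definition of $K$, for Borel $B \subset K'$,
\[
\mu(T^{-1}(B)) = 0 \quad \Longleftrightarrow \quad B \in Ker(S) \cap \M_{K'} = \{0\} \quad \Longleftrightarrow \quad \nu(B) = 0 .
\]
Hence $T : (X,\mu) \to (K',\nu|K')$ is nonsingular.

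\textbf{Step 2 (apply density).} Consider the isometry $I(f) = (f \circ T)\cdot h$ from $L^1(K',\nu|K')$ to $L^1(X,\mu)$, with $h = \frac{d\nu|K'}{dT_*\mu}\circ T$ as in (\ref{eq3.03xxa}). It lies in $\J$ and has $h > 0$ a.e., since $T_*\mu$ and $\nu|K'$ are equivalent. Proposition \ref{prop3.03xb} then yields nonsingular isomorphisms $T_n : (X,\mu) \to (K',\nu|K')$ with $T_n^* \to T^*$ pointwise on $\M_{K'}$.

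\textbf{Step 3 (return to $Y$).} Regard each $T_n$ as a map into $Y$ via the inclusion of $K'$. Each $T_n$ is null-preserving: if $\nu(B) = 0$ then $\nu(B \cap K') = 0$, so $\mu(T_n^{-1}(B)) = \mu(T_n^{-1}(B \cap K')) = 0$.

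Each $T_n^*$ is surjective. Indeed $T_n^*(B) = T_n^*(B \cap K')$, and the restriction of $T_n^*$ to $\M_{K'}$ is a $\s$-isomorphism onto $\M_X$, because $T_n$ is a nonsingular isomorphism onto $K'$.

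For convergence, take any $B \in \M_Y$. Then $T_n^*(B) = T_n^*(B\cap K') \to T^*(B \cap K')$. Moreover $T^*(B \cap K') = T^*(B)$, because $T^*(B \cap K) = 0$.

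\textbf{Main obstacle.} The delicate point is Step 1: checking that restricting the codomain to $K'$ leaves a Polish, nonatomic space on which $T$ is genuinely nonsingular, so that Proposition \ref{prop3.03xb} (and through it Theorem \ref{theo3.10}) applies. This requires the $G_{\d}$ replacement of $K'$, together with the null-set modification of $T$ so that it maps into that $G_{\d}$ set. The remaining steps are bookkeeping with the identity $T_n^*(B) = T_n^*(B\cap K')$.
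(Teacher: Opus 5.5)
Your argument is correct and follows the paper's own route: factor $T = inc \circ T'$ through the complement $K'$ of the zero-set, apply Proposition \ref{prop3.03xb} to the nonsingular map $T' : (X,\mu) \to (K',\nu|K')$, and compose the resulting isomorphisms with the inclusion. Your extra checks ($h>0$ a.e., surjectivity via $T_n^*(B) = T_n^*(B\cap K')$, and $T^*(B \cap K) = 0$) fill in details the paper leaves implicit. One small correction is needed: the regularity remark gives a $G_{\d}$ set \emph{containing} $K'$ with null difference, not one contained in it, and a Borel set need not contain a $G_{\d}$ subset of full measure (e.g.\ the complement in $[0,1]$ of a dense null $G_{\d}$ set). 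Since $K'$ is only determined up to a null set, that superset serves equally well as the representative of $K'$.
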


  \begin{proof} The restriction $T' : (X,\mu) \to (K',\nu|K')$
    is nonsingular where $K'$ is the complement of the zero-set for $T$. For the inclusion map
    $inc : (K',\nu|K') \to (Y,\nu)$ the measure algebra $\s$-homomorphism
    $inc^*$ is surjective and $T = inc \circ T'$.
    Proposition \ref{prop3.03xb} gives a sequence of nonsingular isomorphisms $T'_n: (X,\mu) \to (K',\nu|K')$ such that
    $\{ (T_n')^{*} \}$ converges  pointwise to $(T')^{*}$.  So $T_n = inc \circ T'_n: (X,\mu) \to (Y,\nu)$ is a sequence of null-preserving maps
    with $\{ T_n^* \}$ a sequence of surjections converging pointwise to $T^*$.

 \end{proof} \vspace{.5cm}

    For the $\s$-homomorphisms associated with null-preserving maps we use the uniform topology on $\Sigma(\M_Y,\M_X)$ with the
  the sup metric.
  \begin{equation}\label{eq3.03x}
  d_u(S_1,S_2) \ = \ \sup \{ d(S_1(A),S_2(A)) : A \in \M_Y \}
  \end{equation}
  Since the uniform limit of continuous functions is continuous it follows that $\Sigma(\M_Y,\M_X)$ is a closed subset of the complete,
  separable metric space of continuous functions from $\M_Y$ to $\M_X$.

  This notion of convergence is very restrictive as illustrated by the following examples.
      \vspace{.25cm}

      \begin{exes}\label{exes3.02xxb} Examples on the unit interval. \end{exes}

      (a) For $\ell\in (0,1)$ define $\tilde T_{\ell} : I \to I$ by $\tilde T_{\ell}(t) = \ell \cdot t$. One would like $\tilde T_{\ell}^*$ to
      converge to the identity on $\M_I$ as $\ell \to 1$. However, for all such $\ell$ we have $d_u(\tilde T_{\ell}^*,id_{M_I}) = 1$.
      To see this, let $A_{\ell} = \bigcup_{k=0}^{\infty} (\ell^{2k+2},\ell^{2k+1}]$. Note that $\tilde T_{\ell}^{-1}(A_{\ell})$ is
      disjoint from $A_{\ell}$ and their union is $(0,1]$. Hence, $d(\tilde T_{\ell}^*(A_{\ell}),A_{\ell}) = 1$.

      (b) Similar problems arise even for very nice maps. For $n \in \N$ the map $\hat T_n : I \to I$ by $\hat T_n(t) = t + \frac{1}{2n} \  (mod \ 1)$
      is a measure preserving isomorphism.
      Let $A_n = \bigcup_{k=1}^{n} \ (\frac{2k-1}{2n},\frac{2k}{2n}]$ so that $\hat T_n^{-1}(A_n)$ is disjoint
      from $A_{n}$ and their union is $(0,1]$. Hence, $d(\hat T_{n}^*(A_{n}),A_{n}) = 1$ and so $d_u(\hat T_{n}^*,id_{M_I}) = 1$.

       $\Box$ \vspace{.25cm}

  \begin{prop}\label{prop3.03xd}($\mathbf{G_{\d}}$) If $\M_Y$ and $\M_X$ are separable, nonatomic measure algebras, then the set of
  surjections $\Sigma_{sur}(\M_Y.M_X)$  is
  closed with respect to the uniform topology. The set of
isomorphisms\\ $\Sigma_{iso}(\M_Y.M_X)$ is a $G_{\d}$ subset of $\Sigma_{sur}(\M_Y.M_X)$.\end{prop}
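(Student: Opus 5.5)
We prove the two assertions separately. Closedness of surjections follows from the fact that $\s$-homomorphisms have closed image (Proposition \ref{prop3.02x}), combined with an approximation argument in the uniform metric $d_u$. The $G_{\d}$ assertion follows from the injectivity criterion (c) of Proposition \ref{prop3.02xa}, which under the uniform topology becomes a countable family of open conditions.

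\emph{Surjections are closed.} Let $\{ S_n \}$ in $\Sigma_{sur}$ converge uniformly to $S \in \Sigma$, and fix $A \in \M_X$. For each $n$ choose $B_n \in \M_Y$ with $S_n(B_n) = A$. Then
\[
d(S(B_n),A) = d(S(B_n),S_n(B_n)) \le d_u(S,S_n) \to 0,
\]
so $A$ lies in the closure of $S(\M_Y)$. By Proposition \ref{prop3.02x} this image is closed, hence $A \in S(\M_Y)$. So $S$ is surjective. The uniform topology is what makes this work: the preimages $B_n$ vary with $n$, so pointwise convergence at finitely many sets would not control $d(S(B_n),S_n(B_n))$.

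\emph{Isomorphisms form a $G_{\d}$ in $\Sigma_{sur}$.} On $\Sigma_{sur}$, $S$ is an isomorphism exactly when it is injective, i.e. when its kernel is trivial. By Proposition \ref{prop3.02xa} this is equivalent to: for every $k$ there is $\d>0$ with $\nu(B) > 1/k \Rightarrow \mu(S(B)) \ge \d$. For positive integers $k,m$ define
\[
\mathcal{U}_{k,m} = \{ S \in \Sigma_{sur} : \mu(S(B)) > 1/m \ \text{for all} \ B \ \text{with} \ \nu(B) > 1/k \}.
\]
Then $\Sigma_{iso} = \bigcap_k \bigcup_m \mathcal{U}_{k,m}$. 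Because the condition quantifies over all $B$, $\mathcal{U}_{k,m}$ itself need not be open, so we pass to strict-margin versions. Put
\[
\mathcal{V}_{k,m} = \{ S : \exists\, \eta>0 \ \text{with} \ \mu(S(B)) \ge 1/m + \eta \ \text{whenever} \ \nu(B) > 1/k \}.
\]
This set is open in $d_u$: if $d_u(S',S) < \eta/2$, then $|\mu(S'(B)) - \mu(S(B))| \le d(S'(B),S(B)) < \eta/2$ for every $B$, so $S'$ satisfies the condition with margin $\eta/2$. Moreover $\bigcup_m \mathcal{V}_{k,m} = \bigcup_m \mathcal{U}_{k,m}$, since a bound $\mu(S(B))\ge\d$ gives membership in $\mathcal{V}_{k,m}$ for any $1/m<\d$. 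Hence $\Sigma_{iso} = \bigcap_k \bigcup_m \mathcal{V}_{k,m}$ is a $G_{\d}$ relative to $\Sigma_{sur}$.

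The main obstacle is the equivalence between trivial kernel and the uniform lower bound in (c). We handle it by quoting Proposition \ref{prop3.02xa} directly rather than reproving it. The remaining care is to use the strict-margin formulation above so that each piece of the union is genuinely open under $d_u$.
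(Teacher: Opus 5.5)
Your proof is correct and follows essentially the same route as the paper. Closedness of $\Sigma_{sur}$ rests on the closed image from Proposition~\ref{prop3.02x}: you argue with sequences, while the paper shows the complement is the union of the open sets $G_A$ of maps whose image stays a positive distance from $A$. The $G_{\delta}$ part uses criterion (c) of Proposition~\ref{prop3.02xa} with a $\delta/2$ margin to obtain open sets, and your $\bigcup_m \mathcal{V}_{k,m}$ is exactly the paper's $G_k$.
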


\begin{proof}
A map $S \in \Sigma$ fails to be surjective when some $A \in \M_X$ is a positive distance from the image. For any such $A$ define
 \begin{equation}\label{eq3.04xa}
   G_A\ = \ \{ S : \text{for some } \ \d > 0, \ d(A, S(B)) > \d \ \ \text{for all} \ \ B \in \M_Y \}.
   \end{equation}
   This is open, because if $d(A, S(B)) > \d$ and $d_u(S,S_1) < \d/2$, then $d(A, S_1(B)) > \d/2$. The union of the $G_A$'s is the complement of $\Sigma_{sur}$.

   For injectivity, let $\{ \nu \ge 1/k \}$ denote the subset $\{ B \in \M_Y : \nu(B) \ge 1/k \}$ and similarly define $\{ \mu \le \d \}$ for $\d > 0$.
   Define
    \begin{equation}\label{eq3.04xb}
   G_k \ = \ \{ S : \text{for some } \ \d > 0, \ S(\{ \nu \ge 1/k \}) \cap \{ \mu \le \d \}  = 0 \}.
   \end{equation}

   This is open because if $S(\{ \nu \ge 1/k \}) \cap \{ \mu \le \d \}  = 0$ and $d(S,S_1) < \d/2$, then
   $S(\{ \nu \ge 1/k \}) \cap \{ \mu \le \d/2 \}  = 0.$  That is, if $B \in \{ \nu \ge 1/k \}$ and $\mu(S_1(B)) < \d/2$, then
   $\mu(S(B) + S_1(B)) < \d/2$ implies $\mu(S(B)) < \d.$  From Proposition \ref{prop3.02xa} it follows that
   $\bigcap_k G_k$ is the set of injective $\s$-homomorphisms.

Thus, the bijective maps in $\Sigma$ comprise a $G_{\d}$ set for the uniform topology.
By Proposition \ref{prop3.02xa} $\Sigma_{iso}$ is the set of bijective maps in $\Sigma$.

    \end{proof} \vspace{.25cm}

    Thus, the tent map $T$ given by (\ref{eq1.01}) cannot be uniformly approximated by a non-singular isomorphism because $T^*$ is not surjective.

     \begin{prop}\label{prop3.06x}    Let $(X, \mu)$ and $(Y, \nu)$ be Polish measure spaces.
     Given a null-preserving map $T : (X,\mu)\to (Y,\nu)$ with $T^*$ surjective
   there exists a sequence $\{ T_n \}$ of nonsingular isomorphisms from $(X,\mu)$ to $(Y,\nu)$ such that
   the sequence of $\s$-homomorphisms $\{ T_n^* \}$ converges to $T^*$ uniformly.  \end{prop}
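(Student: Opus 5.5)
The plan is to modify $T$ only on a set of small $\mu$-measure, chosen so that the modified map absorbs the zero-set of $T^*$. Throughout I assume, as in Proposition \ref{prop3.03xd}, that the measure algebras are nonatomic, and I normalize the finite measures whenever an isomorphism result from Section 3 is needed. Let $K$ be the zero-set of $T^*$ from Proposition \ref{prop3.02xb} and $K' = Y \setminus K$. If $\nu(K) = 0$, then $T^*$ is a $\s$-isomorphism by Proposition \ref{prop3.02xb}, and I take $T_n = T$ after lifting, as described below. So assume $\nu(K) > 0$. Since $T^*$ is surjective, Proposition \ref{prop3.02xb} shows that the restriction $T^* : \M_{K'} \to \M_X$ is a $\s$-isomorphism. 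In particular $\mu(T^{-1}(K)) = 0$, and $\nu(K') > 0$ because $\mu(X) > 0$.

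For each $n$, I will choose $C_n \subset K'$ with $0 < \nu(C_n)$ and $\mu(T^{-1}(C_n)) < 1/n$. This is possible because $\nu|K'$ is nonatomic, so arbitrarily small positive sets exist, and because $T^*$ is uniformly continuous (Proposition \ref{prop3.02x}). Put $A_n = T^{-1}(C_n)$. Since $T^*$ is injective on $\M_{K'}$, we have $\mu(A_n) > 0$. Then define $T_n$ as follows:
\begin{itemize}
\item $T_n = T$ on $X \setminus A_n$;
\item on $A_n$, $T_n$ is a nonsingular isomorphism of $(A_n, \mu|A_n)$ onto $(C_n \cup K, \nu|(C_n \cup K))$.
\end{itemize}
The isomorphism on $A_n$ exists because both sides are nonatomic Polish measure spaces of positive measure. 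After normalizing, Proposition \ref{prop3.03} together with Corollary \ref{cor3.09} and the $G_\d$-regularity remark after it gives a measure-preserving isomorphism, and that is in particular nonsingular.

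The main obstacle is showing that $T|(X\setminus A_n)$ is itself a nonsingular isomorphism onto $K' \setminus C_n$ (mod null sets), and not merely a map whose induced $\s$-homomorphism is an isomorphism. I would argue as in Corollary \ref{cor3.09}, using Sikorski's theorem (\cite{R} Proposition 15.3) in place of Theorem \ref{theo3.08}. The $\s$-isomorphism $T^* : \M_{K'\setminus C_n} \to \M_{X\setminus A_n}$ and its inverse both lift to null-preserving point maps. Their compositions induce the identity $\s$-homomorphisms, and the argument of Proposition \ref{prop3.07} (which uses only the induced map on Borel sets) shows they are the identity a.e. Applying Proposition \ref{prop3.07} once more, $T$ agrees a.e. with the lift, so $T$ is a.e. invertible there with a null-preserving inverse. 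Gluing this with the isomorphism on $A_n$, we get that $T_n$ is an a.e. bijection of $X$ onto $Y$ whose inverse, defined piecewise, is null-preserving. Hence $T_n$ is a nonsingular isomorphism.

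Finally, the uniform estimate is immediate. For any Borel $B \subset Y$, the sets $T_n^{-1}(B)$ and $T^{-1}(B)$ coincide outside $A_n$. Hence
\begin{equation*}
d(T_n^*(B),T^*(B)) \ \le \ \mu(A_n) \ < \ 1/n,
\end{equation*}
so $d_u(T_n^*,T^*) \le 1/n \to 0$.
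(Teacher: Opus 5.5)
Your proof is correct and takes essentially the paper's approach. The paper's $T_n = \hat T_n \circ T'$ likewise agrees with $T$ off the preimage of a small set $A_n \cup B_n \subset K'$ (playing the role of your $C_n$) and sends that preimage onto $A_n \cup B_n \cup K$; you do the surgery on the domain side instead, using uniform continuity of $T^*$ up front to get $\mu(T^{-1}(C_n)) < 1/n$ so the uniform estimate is immediate (the paper applies uniform continuity of $T'^*$ at the end), and you make explicit the point-level lifting that the paper takes directly from the algebra-level Proposition \ref{prop3.02xb}, with your nonatomicity assumption also implicitly needed by the paper (it requires $(B_n,\nu|B_n)$ and $(K,\nu|K)$ to be isomorphic after normalization).
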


    \begin{proof} If the zero-set is $0$, then $T$ is already a nonsingular isomorphism. In any case, Proposition \ref{prop3.02xb}
    implies that $T$
    restricts to a nonsingular isomorphism  $T' : (X,\mu) \to (K',\nu|K')$. With $inc : (K', \nu|K') \to (Y,\nu)$
    we have $inc^*(B) = B \cap K'$. Thus, $T$ factors as $inc \circ T'$.

    Choose disjoint subsets $A_n, B_n \subset K'$ with $e_n = \nu(A_n) = \nu(B_n) > 0$ and $1/n > e_n$. Choose $\hat T_n: (B_n,\nu|B_n) \to (K,\nu|K)$
    to be an isomorphism of the normalized measures spaces, $\hat T_n :  (A_n,\nu|A_n) \to (A_n \cup B_n,\nu|(A_n \cup B_n))$
     to be an isomorphism of the normalized measures spaces and $\hat T_n $ to be the identity on $ K' \setminus (A_n \cup B_n)$.
    For $B \in \B_Y$, $inc^*(B) + \hat T_n^*(B) \subset A_n \cup B_n$ and so has measure less than $2/n$. The Radon-Nikodym derivative
    $\frac{d (\hat T_n)_*(\nu|K')}{d \nu}$ is given by $e_n/\nu(K)$ on $K, \ 1/2$ on $ A_n \cup B_n$ and $1 $ on $ K' \setminus (A_n \cup B_n)$.

    By Proposition \ref{prop3.02x} the map $T'^*$ is uniformly continuous. So for every $\ep > 0$ there exists
    $\d > 0$ such that for $B_1, B_2 \subset K', \    d(B_1, B_2) < \d$ implies $d(T'^*(B_1), T'^*(B_2)) < \ep$.
    If $T_n = \hat T_n \circ T'$ then $2/n < \d$ will imply that for $B \in \B_Y$
    \begin{equation}\label{eq3.04xc}
    d(T_n^*(B), T^*(B)) \ = \ d(T'^*(\hat T_n^*(B)), T'^*(inc^*(B))) \ < \ \ep.
    \end{equation}

    Thus, the sequence $\{ T_n \}$ of nonsingular isomorphisms has $T_n^*$ converging uniformly to $T^*$.

    \end{proof}  \vspace{.25cm}

    We call $T : (X, \mu) \to (Y, \nu)$ an \emph{injection} if there exists $B \in \B_Y$ and a nonsingular isomorphism $T' : (X,\mu) \to (B, \nu|B)$
    such that $T = inc \circ T'$ with $inc : (B, \nu|B) \to (Y,\nu)$ the inclusion map. \vspace{.25cm}

   \begin{prop}\label{prop3.06xbbc}  A null-preserving map $T : (X,\mu)\to (Y,\nu)$  with $(Y,\nu)$ separable is an injection if and only if
   $T^* : \M_Y \to \M_X$ is a surjection. \end{prop}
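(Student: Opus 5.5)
We prove the two directions separately, working at the measure algebra level and using Proposition \ref{prop3.02xb} for the substantive direction.

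\emph{Injection implies surjection.} Suppose $T = inc \circ T'$ with $T' : (X,\mu) \to (B,\nu|B)$ a nonsingular isomorphism. Then
\[
T^* = (T')^* \circ inc^*, \qquad inc^*(E) = E \cap B .
\]
The map $inc^* : \M_Y \to \M_B$ is clearly surjective, since every element of $\M_B$ is an element of $\M_Y$ contained in $B$ and is fixed by $inc^*$. Because $T'$ has a nonsingular inverse $\hat T$ with $\hat T \circ T' = id$ a.e. and $T' \circ \hat T = id$ a.e., we get $(T')^* \circ \hat T^* = id$ on $\M_X$. Hence $(T')^*$ is surjective onto $\M_X$, and so is the composite $T^*$.

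\emph{Surjection implies injection.} Assume $T^*$ is surjective. If $Ker(T^*)$ is trivial, take $K' = Y$. Otherwise, separability of $\M_Y$ and Proposition \ref{prop3.02xb} give a zero-set $K$ with $Ker(T^*) = \M_K$, and the restriction $T^* : \M_{K'} \to \M_X$ is a $\s$-isomorphism. In either case set $B = K'$.

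\begin{itemize}
\item[Step 1.] Since $\nu(K) = \mu(T^{-1}(K)) = 0$, the set $T^{-1}(K)$ is $\mu$-null. Redefine $T$ on this null set, for instance by sending it to a fixed point of $B$. This gives a measurable map $T' : X \to B$ with $T = inc \circ T'$ a.e., so $T$ and $inc \circ T'$ have the same $T^*$.
\item[Step 2.] The map $T'$ is null-preserving, and $(T')^*$ is the restriction of $T^*$ to $\M_B$, which is injective. By the remark after Proposition \ref{prop3.02xa}, $T'$ is nonsingular.
\item[Step 3.] The $\s$-homomorphism $(T')^*$ is also bijective, and we must show that $T'$ is a nonsingular isomorphism.
\end{itemize}

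Step 3 is the main obstacle: passing from a $\s$-isomorphism of measure algebras to an a.e.\ invertible point map. The plan is to mimic Corollary \ref{cor3.09}. By Sikorski's theorem (\cite{R} Proposition 15.3), the inverse $\s$-isomorphism $((T')^*)^{-1} : \M_X \to \M_B$ is induced by a null-preserving map $R : B \to X$. Then
\[
(T' \circ R)^* = R^* \circ (T')^* = id, \qquad (R \circ T')^* = (T')^* \circ R^* = id .
\]
Proposition \ref{prop3.07}, whose proof uses only the induced algebra maps and separability, then gives $T' \circ R = id$ a.e.\ and $R \circ T' = id$ a.e. Since $R$ is an isomorphism at the algebra level, it is nonsingular, so $T'$ is a nonsingular isomorphism and $T = inc \circ T'$ is an injection.

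This step needs the standing Polish hypotheses on $(X,\mu)$ and $(B,\nu|B)$. The restriction $(B,\nu|B)$ can be replaced by an isomorphic $G_{\d}$ subset as described after Corollary \ref{cor3.09}, so that $B$ itself may be taken Polish. Failing that, it suffices to note that a bijective $\s$-homomorphism $(T')^*$ means $T'$ is injective modulo null sets. Souslin's theorem (a Borel injection on a Polish space has Borel image and Borel inverse) together with Proposition \ref{prop3.02a}(b), adapted to nonsingular maps, then gives the a.e.\ inverse directly.
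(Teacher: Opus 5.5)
Your proof is correct and follows the paper's route. The easy direction is the same composition of the surjections $inc^*$ and $(T')^*$, and the hard direction uses the same zero-set restriction to $K'$ from Proposition \ref{prop3.02xb}; you additionally spell out the lift from the $\s$-isomorphism $(T')^*$ to an a.e.\ inverse point map (Sikorski plus Proposition \ref{prop3.07}, as in Corollary \ref{cor3.09}), which the paper leaves implicit. One slip in Step 1: $\nu(K)>0$ whenever the kernel is nontrivial, and $\nu(K)\neq\mu(T^{-1}(K))$ in general for a merely null-preserving $T$. The fact you actually need, $\mu(T^{-1}(K))=0$, follows directly from $K\in Ker(T^*)$.
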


   \begin{proof} We saw in the proof of Proposition \ref{prop3.06x} that if $T^*$ is surjective, then $T = inc \circ T'$ with $inc : (K', \nu|K') \to (Y,\nu)$
    the inclusion of the complement of the zero-set and with $T'$ an isomorphism. So $T$ is an injection.

    Conversely, if $T = inc \circ T'$ with $inc : (B, \nu|B) \to (Y,\nu)$ the inclusion and $T' : (X,\mu) \to (B, \nu|B)$ is a nonsingular isomorphism,
    then $(T')^* : \M_B \to \M_X$ is a $\s$-isomorphism and so is surjective and $inc^* : \M_Y \to \M_B$ is a retraction and so is surjective.
    Hence the composition $(T')^* \circ inc^* = (inc \circ T')^* = T^*$ is surjective.

    \end{proof}  \vspace{.25cm}

  Proposition \ref{prop3.07} holds for null-preserving maps and so if we denote by $\Sigma((X,\mu),(Y,\nu))$ the set of null-preserving maps from
  $(X,\mu)$ to $(Y,\nu))$, then because $T \mapsto T^*$ is a bijection from $\Sigma((X,\mu),(Y,\nu))$ to $\Sigma(\M_Y,\M_X)$, we may use this to
  transfer the coarse topology to $\Sigma((X,\mu),(Y,\nu))$. If we use it to transfer the uniform topology, then the set of injections, $\Sigma_{inj}((X,\mu),(Y,\nu))$,   and the set of isomorphisms, $\Sigma_{iso}((X,\mu),(Y,\nu))$, are mapped
  onto the closed set $\Sigma_{sur}(\M_Y,\M_X)$ and the $G_{\d}$ set
  $\Sigma_{iso}(\M_Y,\M_X)$, respectively. \vspace{.25cm}

    So, as usual, we obtain

     \begin{theo}\label{theo3.07x}   Let $(X, \mu)$ and $(Y, \nu)$ be Polish measure spaces.
        The set $\Sigma_{iso}((X,\mu),(Y,\nu))$ of nonsingular isomorphisms
        is a dense, $G_{\d}$ subset of the
        completely metrizable space $\Sigma_{inj}((X,\mu),(Y,\nu))$ of
       null-preserving injections, equipped with the topology of uniform convergence. That is, the generic
      null-preserving injection  is invertible  \end{theo}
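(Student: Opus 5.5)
The plan is to transfer everything to the measure algebra level via the bijection $T \mapsto T^*$ from $\Sigma((X,\mu),(Y,\nu))$ onto $\Sigma(\M_Y,\M_X)$, and to give $\Sigma((X,\mu),(Y,\nu))$ the uniform topology so that this bijection is an isometry for $d_u$. This bijection uses Sikorski's theorem for existence and the null-preserving version of Proposition \ref{prop3.07} for uniqueness.

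\textbf{Step 1: identify the spaces.} By Proposition \ref{prop3.06xbbc}, under this identification the injections $\Sigma_{inj}((X,\mu),(Y,\nu))$ correspond exactly to the surjections $\Sigma_{sur}(\M_Y,\M_X)$. The nonsingular isomorphisms $\Sigma_{iso}((X,\mu),(Y,\nu))$ correspond exactly to the $\s$-isomorphisms $\Sigma_{iso}(\M_Y,\M_X)$:
\begin{itemize}
\item in one direction, an isomorphism $T$ has $T^*$ bijective with inverse $(T^{-1})^*$;
\item in the other direction, if $T^*$ is a $\s$-isomorphism, lift $(T^*)^{-1}$ to a null-preserving $\hat T$ by Sikorski's theorem and use Proposition \ref{prop3.07} to get $\hat T \circ T = id$ a.e. and $T \circ \hat T = id$ a.e., exactly as in Corollary \ref{cor3.09}.
\end{itemize}

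\textbf{Step 2: complete metrizability.} $\Sigma(\M_Y,\M_X)$ is closed in the complete metric space of continuous maps $\M_Y \to \M_X$ with the sup metric $d_u$. By Proposition \ref{prop3.03xd}, $\Sigma_{sur}(\M_Y,\M_X)$ is closed in $\Sigma(\M_Y,\M_X)$. Hence $\Sigma_{sur}$, and so $\Sigma_{inj}((X,\mu),(Y,\nu))$, is a complete metric space.

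\textbf{Step 3: $G_{\d}$.} Proposition \ref{prop3.03xd} gives directly that $\Sigma_{iso}(\M_Y,\M_X)$ is $G_{\d}$ in $\Sigma_{sur}(\M_Y,\M_X)$.

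\textbf{Step 4: density.} Given an injection $T$, $T^*$ is surjective by Proposition \ref{prop3.06xbbc}. Proposition \ref{prop3.06x} then yields nonsingular isomorphisms $T_n$ with $T_n^* \to T^*$ in $d_u$.

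The Baire Category Theorem in the complete space $\Sigma_{inj}$ then says the isomorphisms form a residual set, i.e.\ the generic injection is invertible.

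The main point requiring care is Step 1, checking that the correspondence really is a bijection onto $\Sigma_{sur}$ and onto $\Sigma_{iso}$ at the level of maps. Two things need attention:
\begin{itemize}
\item The nonatomic hypothesis in Proposition \ref{prop3.03xd} and the uncountability needed for the lifting results are implicit in the Polish measure space setting. I would note that the openness arguments in Proposition \ref{prop3.03xd} do not actually use nonatomicity.
\item The construction in Proposition \ref{prop3.06x} requires $K'$ to admit small subsets $A_n, B_n$ of equal positive measure, which needs nonatomicity. If the zero-set is $0$, $T$ is already an isomorphism and there is nothing to approximate.
\end{itemize}
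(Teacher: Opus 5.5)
Your proposal is correct and follows the paper's own argument. You transfer everything through $T \mapsto T^*$ and use Proposition~\ref{prop3.06xbbc} to identify injections with $\Sigma_{sur}(\M_Y,\M_X)$. Proposition~\ref{prop3.03xd} then gives closedness and the $G_\delta$ property, and Proposition~\ref{prop3.06x} gives density. Your explicit check that nonsingular isomorphisms correspond exactly to $\sigma$-isomorphisms (via Sikorski's theorem and Proposition~\ref{prop3.07}, as in Corollary~\ref{cor3.09}) spells out a step the paper only asserts. Your remark that nonatomicity is implicitly required for the density construction also makes explicit an assumption the paper leaves tacit.
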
\vspace{.25cm}

       Finally, combining Corollary \ref{cor3.03xbb} with Proposition \ref{prop3.06x} we have \vspace{.25cm}

          \begin{theo}\label{theo3.08x}   Let $(X, \mu)$ and $(Y, \nu)$ be Polish measure spaces.
        The set $\Sigma_{iso}((X,\mu),(Y,\nu))$ of nonsingular isomorphisms
       is a dense subset of the space $\Sigma((X,\mu),(Y,\nu))$ of  null-preserving maps equipped with the coarse topology of pointwise convergence.
       In fact for any null-preserving map $T : (X, \mu) \to (Y, \nu)$ there is a sequence $T_n : (X, \mu) \to (Y, \nu)$
       of nonsingular isomorphisms converging pointwise to $T$.\end{theo}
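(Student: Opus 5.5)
The plan is to reduce to the surjective case and then approximate, in two stages. Fix a null-preserving map $T : (X,\mu) \to (Y,\nu)$. By Corollary \ref{cor3.03xbb} there is a sequence $\{ S_m \}$ of null-preserving maps with each $S_m^*$ a surjective $\s$-homomorphism and $S_m^* \to T^*$ pointwise on $\M_Y$. By Proposition \ref{prop3.06x}, for each $m$ there is a sequence $\{ R_{m,k} \}_k$ of nonsingular isomorphisms with $R_{m,k}^* \to S_m^*$ uniformly, hence also pointwise, as $k \to \infty$.

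Next I would extract a single sequence. Let $\{ E_i \}$ be a dense sequence in $\M_Y$. For each $n$, first choose $m$ with $d(S_m^*(E_i), T^*(E_i)) < 1/2n$ for $i \le n$. Then choose $k$ with $d_u(R_{m,k}^*, S_m^*) < 1/2n$, and set $T_n = R_{m,k}$. This gives $d(T_n^*(E_i), T^*(E_i)) < 1/n$ for all $i \le n$, so $T_n^*(E_i) \to T^*(E_i)$ for every $i$.

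The main obstacle is passing from convergence on the dense sequence to convergence at every $B \in \M_Y$. For $\s$-homomorphisms this is not automatic, as the paper notes. What the argument needs is equicontinuity of $\{ T_n^* \}$, and nonsingular isomorphisms need not be uniformly equicontinuous. My first attempt is to avoid the issue by returning to the structure of the constructions.

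In Corollary \ref{cor3.03xbb}, the maps come from Proposition \ref{prop3.03xb}, namely $S_m = inc \circ T_m' \circ T_1$ with $T_m'$ measure preserving for $\mu_T$ and $T_1$ a fixed nonsingular isomorphism. The family $\{ (T_m')^* \}$ is equicontinuous because these maps are isometries. Composing with the fixed uniformly continuous map $T_1^*$ (Proposition \ref{prop3.02x}) keeps the family equicontinuous, so pointwise convergence on $\{E_i\}$ extends to all of $\M_Y$ for the $S_m$. The perturbations $R_{m,k}$ differ from $S_m$ by at most $1/2n$ uniformly, so an $\ep/3$ argument then gives $d(T_n^*(B), T^*(B)) \to 0$ for every $B$.

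Finally, density of $\Sigma_{iso}$ in the coarse topology follows immediately, since a convergent sequence witnesses it.
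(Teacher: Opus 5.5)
Your argument is correct and is essentially the paper's: the paper also obtains the result by combining Corollary \ref{cor3.03xbb} with Proposition \ref{prop3.06x}. Once those are unwound, your $R_{m,k}$ have the same form $T_{3k}\circ T_{2m}\circ T_1$ as the paper's approximants: first the fixed change of measure $T_1$, then a measure-preserving isomorphism, then a uniform approximant of the inclusion $K'\to Y$. The only difference is that the paper uses a single index and the isometry property of $T_{2n}^*$ where you diagonalize.

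Your equicontinuity step is valid but unnecessary. Corollary \ref{cor3.03xbb} already gives $S_m^*(B)\to T^*(B)$ for every $B\in\M_Y$. So if you choose $k(m)$ with $d_u(R_{m,k(m)}^*,S_m^*)<1/m$, then $d(R_{m,k(m)}^*(B),T^*(B))\le 1/m+d(S_m^*(B),T^*(B))\to 0$ directly, with no dense sequence needed.
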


          \begin{proof} With $K'$ the complement of the zero-set of $T$,
           the restriction $T : (X,\mu) \to (K',\nu|K')$ is a nonsingular map, and so there exists a measure $\mu_T$ on $X$ such that
          $T_1 = id_X : (X,\mu) \to (X,\mu_T)$ is a nonsingular isomorphism and $T_*\mu_T = \nu|K'$. Hence, $T_2 = T : (X,\mu_T) \to (K',\nu|K')$ is
          a measure preserving map. With $T_3 = inc : (K',\nu|K') \to (Y,\nu)$ the inclusion map, we have the factorization
          $T = T_3 \circ T_2 \circ T_1$.

          By our previous results, there exists a sequence $T_{2n}: (X,\mu_T) \to (K',\nu|K')$ of measure space isomorphisms which converge
         to $T_2$ and a sequence $T_{3n} : (K',\nu|K') \to (Y,\nu)$ of nonsingular isomorphisms which converge to $T_3$.

         Let $B \in \B_Y$. From convergence of the $T_3$ sequence we have \\ $d(T_{3n}^*(B),T_3^*(B)) \to 0$. Because each $T_{2n}$ is measure
         preserving we have $d(T_{2n}^*T_{3n}^*(B),T_{2n}^*T_3^*(B)) \to 0$. From convergence of the $T_2$ sequence
         we have $d(T_{2n}^*T_{3}^*(B),T_{2}^*T_3^*(B))) \to 0$ and so by the triangle inequality
         $d(T_{2n}^*T_{3n}^*(B),T_{2}^*T_3^*(B))) \to 0$. Here the metric on $\M_X$ uses the measure $\mu_T$. But because the measure
          $\mu_T$ is equivalent to $\mu$ we have $d(T_1^*T_{2n}^*T_{3n}^*(B),T_1^*T_{2}^*T_3^*(B))) \to 0$.

          Thus, $T_n = T_{3n} \circ T_{2n} \circ T_1$ is a sequence of nonsingular isomorphisms which converges to $T =  T_{3} \circ T_{2} \circ T_1$
          with respect to the coarse topology.

    \end{proof}  \vspace{.25cm}

       Of course, we don't have genericity results in this case because of various problems with the coarse topology.

    \vspace{.5cm}

 \section{\textbf{Good Measures on Cantor Spaces}}\vspace{.25cm}

 We will call a pair $(X,\mu)$ a \emph{Cantor measure space} when $\mu$ is a finite, full, nonatomic Borel measure on a Cantor space $X$.  Thus, every nonempty
 open subset has positive measure and every countable subset has measure zero. The clopen subsets form a countable algebra which is a basis for the topology and
 which generates the Borel $\s$-algebra. Whenever we need one, we will fix a metric $d$ on $X$. For a clopen partition we will call the maximum diameter the
 $d$-mesh and the maximum measure the $\mu$-mesh.  \vspace{.25cm}

 \begin{lem}\label{lem4.01} For every $\ep > 0$ there exists $\d > 0$ such that any Borel set with diameter less than $\d$ has measure less than $\ep$.
 In particular, if a clopen partition has $d$-mesh less than $\d$, then it has $\mu$-mesh less than $\ep$.
\end{lem}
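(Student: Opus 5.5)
I will argue by contradiction and use compactness of $X$ together with the fact that $\mu$ is nonatomic. Suppose the statement fails for some $\ep > 0$. Then for each positive integer $n$ there is a Borel set $A_n$ with diameter less than $1/n$ and $\mu(A_n) \ge \ep$. Each $A_n$ is nonempty, so choose $x_n \in A_n$. By compactness of $X$, passing to a subsequence, we may assume $x_n \to x$ for some $x \in X$.

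Next I will show that the sets $A_n$ shrink onto $x$. Given $r > 0$, for $n$ large we have $d(x_n,x) < r/2$ and $1/n < r/2$. Hence $A_n \subset B_r(x)$, the open ball of radius $r$ about $x$, and so $\mu(B_r(x)) \ge \ep$ for every $r > 0$. The balls $B_{1/k}(x)$ decrease to $\{x\}$. Since $\mu$ is finite, continuity from above gives $\mu(\{x\}) = \lim_k \mu(B_{1/k}(x)) \ge \ep > 0$. This contradicts the assumption that $\mu$ is nonatomic, that is, that singletons have measure zero (see the remark in Section 3 that a Borel measure on a separable metric space is nonatomic exactly when every point has measure zero).

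The clause about partitions follows at once. If a clopen partition has $d$-mesh less than $\d$, then each of its finitely many elements is a Borel set of diameter less than $\d$, so each has measure less than $\ep$. Since the partition is finite, the maximum of these measures, which is the $\mu$-mesh, is also less than $\ep$.

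The only delicate point is the passage to the limit, where finiteness of $\mu$ is needed for continuity from above. Fullness of $\mu$ plays no role here.
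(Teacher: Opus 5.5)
Your proof is correct and follows the paper's argument: a sequence of sets with shrinking diameters and measure at least $\ep$ accumulates at a point $x$ every neighborhood of which has measure at least $\ep$, so $x$ would be an atom. You make explicit the two steps the paper leaves implicit, namely choosing $x_n \in A_n$ and extracting a convergent subsequence by compactness, and then using continuity from above for the finite measure $\mu$ to get $\mu(\{x\}) \ge \ep$.
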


\begin{proof} If $\{ A_i \}$ were a sequence of Borel sets with diameter tending to zero but with measures at least $\ep$, then for any limit point $x$,
every neighborhood would have measure at least $\ep$ and so $x$ would be an atom.

\end{proof}\vspace{.25cm}

A Cantor measure space map $T : (X,\mu) \to (Y,\nu)$ is a continuous map from $X$ to $Y$ which preserves measure.  To see that measure is preserved,
it suffices to check:
\begin{equation}\label{eq4.01}
\mu(T^{-1}(A)) = \nu(A) \ \text{for all clopen subsets} \ A \ \text{of} \ Y.
\end{equation}
If $T$ is a Cantor measure space map with $T$ a homeomorphism, then $T^{-1}$ satisfies (\ref{eq4.01}) and so $T$ is a Cantor measure space isomorphism. Notice
that if continuous maps $T_1, T_2 : X \to Y$ are equal a.e., then they are equal everywhere because $\{ x : T_1(x) \not= T_2(x) \}$ is an open set and
so has positive measure if it is nonempty. Thus, conversely, if  $T : (X,\mu) \to (Y,\nu)$ is a Cantor measure space isomorphism, then
$T$ is a homeomorphism from $X$ to $Y$.

Since $T^{-1}(Y) = X$, the existence of such a map requires $\mu(X) = \nu(Y)$. Furthermore, $T(X) = Y$ because $\nu$ is full and so $T(X)$ meets every
nonempty open subset. Thus, $T$ is necessarily surjective. \vspace{.25cm}

\begin{df}\label{df4.02} For a Cantor measure space $(X,\mu)$ the \emph{clopen values set} $S(\mu)$ is defined by
\begin{equation}\label{eq4.02}
S(\mu) = \{ \mu(A) : \ \text{for} \ A \ \text{clopen in} \ X \}.
\end{equation}\end{df}\vspace{.25cm}

\begin{prop}\label{prop4.03} (a) If $(X,\mu)$ is a Cantor measure space, then $S(\mu)$ is a countable, dense subset of $[0,\mu(X)]$ which contains the
endpoints.

(b) If $T : (X,\mu) \to (Y,\nu)$ is a Cantor measure space map, then $S(\nu) \subset S(\mu)$ with equality if $T$ is an isomorphism.
\end{prop}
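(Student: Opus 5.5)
For part (a), countability follows because a compact metrizable space has only countably many clopen subsets, as observed in Section 2. The endpoints come from $\mu(\emptyset)=0$ and $\mu(X)$, since both $\emptyset$ and $X$ are clopen.

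For density, fix $t \in (0,\mu(X))$ and $\ep>0$. By Lemma \ref{lem4.01} there is a clopen partition $\{U_1,\dots,U_n\}$ of $X$ with $\mu$-mesh less than $\ep$. Such a partition exists because clopen sets form a basis: cover $X$ by clopen sets of small diameter, take a finite subcover, and disjointify. The partial sums $s_k=\mu(U_1\cup\dots\cup U_k)$ rise from $0$ to $\mu(X)$ in steps smaller than $\ep$. Hence some $s_k$ lies within $\ep$ of $t$, and each $s_k$ belongs to $S(\mu)$. So $S(\mu)$ is dense in $[0,\mu(X)]$. The only real input is Lemma \ref{lem4.01}, which is where nonatomicity is used; fullness is not needed here.

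For part (b), let $B\subset Y$ be clopen. Since $T$ is continuous, $T^{-1}(B)$ is clopen in $X$. Since $T$ preserves measure, $\mu(T^{-1}(B))=\nu(B)$. Hence every $\nu(B)\in S(\nu)$ also lies in $S(\mu)$, giving $S(\nu)\subset S(\mu)$.

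If $T$ is an isomorphism, then, as noted after (\ref{eq4.01}), $T$ is a homeomorphism and $T^{-1}:(Y,\nu)\to(X,\mu)$ is also a Cantor measure space map. Applying the inclusion just proved to $T^{-1}$ gives $S(\mu)\subset S(\nu)$, and therefore equality.

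I expect no serious obstacle in this proposition. The one step needing care is the density argument, specifically producing clopen partitions of small $\mu$-mesh, and that is exactly what Lemma \ref{lem4.01} provides.
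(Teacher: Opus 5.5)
Your proof is correct and follows essentially the same route as the paper. Density comes from the partial sums of a clopen partition of small $d$-mesh, which has small $\mu$-mesh by Lemma \ref{lem4.01}. Part (b) follows because preimages of clopen sets are clopen and measure is preserved, with equality obtained by applying this to the inverse homeomorphism.
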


\begin{proof} (a) Given $\ep > 0$ and $\d > 0$ from Lemma \ref{lem4.01} we choose $\{A_1, \dots, A_n \}$ is a clopen partition with $d$-mesh less than $\d$
then
$$\{ a_0 = 0, a_1 = \mu(A_1), a_2 = \mu(A_1 \cup A_2), \dots, a_n = \mu(A_1 \cup \dots \cup A_n) = \mu(X) \}$$
is an increasing sequence in $[0,\mu(X)]$ with $a_n - a_{n-1} < \ep$ for $n = 1, \dots, n$. As $\ep$ was arbitrary, it follows that $S(\mu)$ is dense.
It is clearly countable.

(b) This is obvious from (\ref{eq4.01}).

\end{proof} \vspace{.25cm}

Our results will be applied to the Cantor measure spaces which satisfy a homogeneity condition, the \emph{Subset Condition}. \vspace{.25cm}

\begin{df}\label{df4.04} For a Cantor measure space $(X,\mu)$, the measure $\mu$ is called \emph{good} and $(X,\mu)$ is called a
good Cantor measure space when for every pair of clopen subsets $A, B$ of $X$
\begin{equation}\label{eq4.03}\begin{split}
\mu(A) \le \mu(B) \quad \Longrightarrow \hspace{3cm} \\ \text{there exists a clopen} \ A_1 \subset B \ \text{with} \ \mu(A_1) = \mu(A).
\end{split}\end{equation}\end{df}\vspace{.25cm}

It is clear that if $(X,\mu)$ is a good Cantor measure space and $A$ is a nonempty clopen subset of $X$, then $(A,\mu|A)$ is a good Cantor
measure space with
\begin{equation}\label{eq4.04a}
S(\mu|A) = S(\mu) \cap [0,\mu(A)].
\end{equation}

A subset $S$ of an interval $[0,m]$ is called \emph{group-like} when $m \in S$ and $S$ is the
intersection of the interval with an additive subgroup of $\R$.\vspace{.25cm}

\begin{lem}\label{lem4.05} If $S$ is a subset of the unit interval $[0,1]$ with $0, 1 \in S$, then $(S + \Z) \cap [0,1] = S$. Furthermore,
$S + \Z$ is a subgroup of $\R$, and so $S$ is group-like, if and only if for all $s, t \in S$
\begin{equation}\label{eq4.04}
%s + t \le 1 \qquad \Longright \qquad s+t \in S,
s \le t \qquad \Longrightarrow \qquad t - s  \in S.
\end{equation} \end{lem}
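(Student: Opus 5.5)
The lemma has two claims. The first is that $(S+\Z)\cap[0,1]=S$. The second is that $S+\Z$ is a subgroup of $\R$ exactly when $S$ is closed under differences of comparable elements, as in (\ref{eq4.04}). Since a subgroup intersected with $[0,1]$ and containing $1$ is group-like by definition, the phrase ``and so $S$ is group-like'' will follow once the equivalence is shown, using the first claim.

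For the first claim, $S\subset (S+\Z)\cap[0,1]$ is clear by taking the integer $0$. Conversely, suppose $s+n\in[0,1]$ with $s\in S\subset[0,1]$ and $n\in\Z$. Then $n\in[-s,1-s]\subset[-1,1]$. If $n=0$ we are done. If $n=1$, then $s+1\le 1$ forces $s=0$, and $s+n=1\in S$. If $n=-1$, then $s-1\ge 0$ forces $s=1$, and $s+n=0\in S$. Here we use $0,1\in S$.

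Now suppose $S+\Z$ is a subgroup and $s\le t$ are in $S$. Then $t-s\in S+\Z$ and $t-s\in[0,1]$, so $t-s\in S$ by the first claim.

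For the converse, assume (\ref{eq4.04}). The set $S+\Z$ contains $0$, and it is closed under adding integers, so it suffices to check closure under negation and under addition of elements of $S$.

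Negation: for $s\in S$, the hypothesis with $s\le 1$ gives $1-s\in S$. Hence $-s=(1-s)-1\in S+\Z$.

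Addition: take $s,t\in S$. If $s+t\le 1$, note that $1-t\in S$ and $s\le 1-t$. The hypothesis then gives $(1-t)-s\in S$, and applying it again with $1$ gives $1-((1-t)-s)=s+t\in S$. If $s+t>1$, then $1-s\le t$, so $t-(1-s)=s+t-1\in S$, and $s+t\in S+\Z$.

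With negation, closure of $S+\Z$ under differences follows from closure under sums. The only delicate point is this case split according to whether $s+t\le1$; the rest is routine.
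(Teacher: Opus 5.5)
Your proof is correct and follows essentially the same route as the paper's: the same case split on whether $s+t\le 1$, the same use of $1-t\in S$, and the same negation identity $-s=(1-s)-1$. The differences are cosmetic. You treat the endpoint cases $s\in\{0,1\}$ of the first claim explicitly, where the paper only writes out $0<t<1$. For $s+t>1$ you apply (\ref{eq4.04}) directly to $1-s\le t$, whereas the paper routes through $(1-s)+(1-t)\le 1$ and the first case.
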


\begin{proof} If $0 < t < 1$ and $n \in \Z$, then $n \ge 1$ implies $n + t > 1$ and $n \le -1$ implies $n +t < 0$.  Hence, $n+t \in [0,1]$ implies
$n = 0$. Thus, $(S + \Z) \cap [0,1] = S$.

If $S$ is group-like then (\ref{eq4.04}) obviously holds. Assume, conversely, that (\ref{eq4.04}) is true. Since $1 \in S$, $1 - t \in S$ when
$t \in S$. If $s + t \le 1$, then $s \le 1 - t$ and so $s + t = 1 - [(1 - t) - s] \in S$.    If $s + t > 1$, then
$(1 - s) + (1 - t) = 2 - (s + t) \le 1$. Hence, $s + t - 1 = 1 - [(1 - s) + (1 - t)] \in S$. It follows that
$S + \Z$ is closed under addition. Furthermore, $-(n + t) = -(n+1) + (1 - t)$ and so $S + \Z$ is closed under negation.

\end{proof} \vspace{.25cm}

We now describe the crucial properties of good measures.\vspace{.25cm}

\begin{theo}\label{theo4.06} (a) If $(X,\mu)$ is a good Cantor measure space, then $S(\mu)$ is group-like.

(b) If $S$ is a dense, group-like subset of $[0,1]$ with $0, 1 \in S$, then there exists a normalized good Cantor measure space with
$S(\mu) = S$.

(c) Assume that $(X,\mu)$ and $(Y,\nu)$ are good Cantor measure spaces with $\mu(X) = \nu(Y)$.

There exists a Cantor measure space isomorphism $T : (X,\mu) \to (Y,\nu)$ if and only if $S(\nu) = S(\mu)$.

There exists a Cantor measure mapping $T : (X,\mu) \to (Y,\nu)$ if and only if $S(\nu) \subset S(\mu)$.
  \end{theo}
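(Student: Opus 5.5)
The plan is to prove the three parts separately, using Lemma \ref{lem4.05} for (a), a direct construction for (b), and a back-and-forth argument modeled on Proposition \ref{prop2.02} for (c).

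For (a) we may normalize, so $\mu(X)=1$; then $0,1\in S(\mu)$, and by Lemma \ref{lem4.05} it suffices to verify (\ref{eq4.04}). Suppose $s=\mu(A)\le t=\mu(B)$ with $A,B$ clopen. The subset condition (\ref{eq4.03}) gives a clopen $A_1\subset B$ with $\mu(A_1)=s$. Then $B\setminus A_1$ is clopen with measure $t-s$, so $t-s\in S(\mu)$.

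For (b), write $G=S+\Z$, a dense subgroup of $\R$. I would build $X$ as an inverse limit, or as the path space of a Bratteli diagram, realizing $\mu$ on a nested sequence of clopen partitions. Enumerate $S=\{s_1,s_2,\dots\}$. Inductively choose finite partitions of $[0,1]$ into intervals whose endpoints lie in $S$ and whose lengths tend to zero, at each stage adding the point $s_n$. The combinatorial Cantor space is then built from these refinements, with each interval split at least in two so that there are no isolated points. The weights are the lengths, all of which lie in $G\cap(0,1]$. Every clopen set is a finite union of cells, so its measure lies in $S$, which gives $S(\mu)\subset S$; and since every $s_n$ is an endpoint, $S\subset S(\mu)$. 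The real content is goodness. Every clopen set is a union of cells of some stage $N$. I want to arrange the refinement so that whenever $\mu(A)\le\mu(B)$, refining $B$ far enough produces a sub-union of measure exactly $\mu(A)$. For this I would require that each cell at stage $n$ is eventually subdivided through all the points of $S$ lying inside it (in translated form). This works because $S$ is group-like: a cell of length $\ell$ can be split at $c$ whenever $c\in G\cap[0,\ell]$, and the difference $\mu(A)-(\text{sum of some cells of }B)$ lies in $G$. Since $S$ is countable, a diagonal enumeration schedules all such splits. A cleaner alternative is to take $X=$ the Stone space of the Boolean algebra of finite unions of half-open intervals $[a,b)\subset[0,1)$ with $a,b\in S$, with $\mu$ given by Lebesgue length. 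This space is Cantor because $S$ is dense and countable. Goodness is immediate there: if $B$ is a finite union of intervals and $\mu(A)\le\mu(B)$, take an initial portion of $B$ of length $\mu(A)$; its cut point lies in $G\cap[0,1]=S$ by group-likeness.

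For (c), necessity follows from Proposition \ref{prop4.03}(b). For sufficiency in the isomorphism case, I would run a back-and-forth in the style of Proposition \ref{prop2.02}. Inductively build clopen partitions $\P_n$ of $X$ and $\Q_n$ of $Y$, with $d$-mesh tending to zero on both sides, together with bijections $\P_n\to\Q_n$ that preserve measure and inclusion. The step is as follows. Given matched cells $U\leftrightarrow V$ with $\mu(U)=\nu(V)$, refine $U$ into small clopen pieces with measures $a_1,\dots,a_k\in S(\mu)=S(\nu)$. Then use (\ref{eq4.03}) in $V$ repeatedly, since each partial sum lies in $S(\nu)\cap[0,\nu(V)]$ by (\ref{eq4.04a}), to carve $V$ into clopen pieces of exactly those measures. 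Next alternate roles, refining the $Y$-side pieces to small diameter and carving the $X$ side. The limit map is a homeomorphism preserving the measure of cells, hence of all clopen sets. For the mapping case with $S(\nu)\subset S(\mu)$, do only the one-directional version. Refine $Y$ into small pieces of measures in $S(\nu)\subset S(\mu)$ and carve the matched $X$ cell into clopen sets of those measures, which is possible because $\mu$ is good and the partial sums lie in $S(\mu)$. This yields nested partitions $T^{-1}(\Q_n)$ of $X$, while the partitions of $Y$ have mesh tending to zero. Define $T(x)$ as the intersection point of the matched $Y$ cells. Then $T$ is continuous, and $\mu(T^{-1}(A))=\nu(A)$ for clopen $A$, which suffices by (\ref{eq4.01}).

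The main obstacle is (b). The goodness of the constructed measure depends on the group-like hypothesis in an essential way, and making the Stone-space or Bratteli construction rigorous (perfectness, metrizability, full support) requires care. The back-and-forth in (c) is routine once the carving step is set up.
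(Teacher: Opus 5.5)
Your proof is correct, and it argues (b) and (c) directly where the paper only cites. Part (a) is exactly the paper's argument. For (b) the paper invokes Theorem 2.6 of \cite{A05} together with Corollary 3.3 of \cite{A99}, and for (c) Theorem 2.9 of \cite{A05}; you give self-contained proofs instead.

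For (b), your Stone-space model works: take finite unions of intervals $[a,b)$ with $a,b\in S$, measured by length. The subset condition then follows in one line from group-likeness, since the cut point of an initial segment of $B$ of length $\mu(A)$ lies in $(S+\Z)\cap[0,1]=S$. Your first, Bratteli-style sketch is superfluous. Once every $s_n$ has been used as an endpoint, the clopen algebra already consists of all such finite unions, so no further ``translated'' splits need scheduling.

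For (c), your partition-matching back-and-forth is sound. It is modeled on Proposition \ref{prop2.02} and uses (\ref{eq4.03}) to carve a cell into clopen pieces of prescribed measures. Its one-directional version also shows something the citation hides: for the existence of a Cantor measure map, only the domain measure $\mu$ needs to be good. The paper's route buys brevity; yours shows exactly where group-likeness and goodness are used.

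Two small points to tidy up:
\begin{enumerate}
\item Statement (b) omits the hypothesis that $S$ is countable. This hypothesis is necessary, because $S(\mu)$ is always countable by Proposition \ref{prop4.03}(a). You use it tacitly (when you enumerate $S$ and when you call the Stone space Cantor), and you should say so.
\item In the Stone-space construction, say explicitly that length is countably additive on the clopen algebra. By compactness, a clopen set is never a countably infinite disjoint union of nonempty clopen sets, so finite additivity suffices and length extends to a Borel measure. That measure is full because nonempty clopen sets have positive length, and nonatomic because $S$ is dense.
\end{enumerate}
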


\begin{proof} (a) A subset $S$ is group-like in $[0.m]$ if and only if $(1/m)S$ is group-like in $[0,1]$.
So we may assume $(X,\mu)$ is normalized. Let $s = \mu(A)$ and $t = \mu(B)$ with clopens $A, B$.  If $s \le t$ then there exists $A_1 \subset B$ clopen
with $\mu(A_1) = \mu(A)$.  Then $\mu(B \setminus A_1) = t - s$ and so (\ref{eq4.04})  holds for $S(\mu)$ and $S(\mu)$ is group-like
by Lemma \ref{lem4.05}.

(b) This result is Theorem 2.6 of \cite{A05} taken together with Corollary 3.3 of \cite{A99}.

(c) The conditions on the clopen values sets are  necessary by Proposition \ref{prop4.03}. By normalizing, i.e.
dividing the measures by $\mu(X) = \nu(Y)$ we may assume they are
normalized. The results are then in Theorem 2.9 of \cite{A05}.

\end{proof} \vspace{.25cm}

Thus, when we restrict ourselves to good Cantor measure spaces, the clopen values set is a complete invariant.  Furthermore, if $(X,\mu)$ and
$(Y,\nu)$ are good with $S(\mu) = S(\nu)$ and if $A \subset X$ and $A_1 \subset Y$ are clopens with $\mu(A) = \nu(A_1 )$, then there exists
an isomorphism from $(A,\mu|A)$ to $(A_1 ,\nu|A_1 )$.  The maximum values in $S(\mu) = S(\nu)$ are $\mu(X) = \nu(Y)$ and so
 we can combine the isomorphism from $A$ to $A_1 $ with an isomorphism
from $X \setminus A$ to $Y \setminus A_1 $ to obtain an isomorphism from $(X,\mu)$ to $(Y,\nu)$ which maps $A$ to $A_1 $. In
particular, with $X = Y$ we see that the homogeneity condition defining
a good measure is not as mild as it first appears.

Theorem \ref{theo4.06} provides the uniqueness results that we need in this context. \vspace{.25cm}

In Section 2 we defined $\CC(X,Y)$ to be the space of continuous maps with the topology of uniform convergence and in Section 3
we defined $\S((X,\mu),(Y,\nu))$ to be the set of measure space maps. For Cantor measure spaces the set $\CC \S((X,\mu),(Y,\nu))$
is the intersection $\CC(X,Y) \cap \S((X,\mu),(Y,\nu))$. We use the topology of uniform convergence on \\ $\CC \S((X,\mu),(Y,\nu))$ so it becomes
a subspace of $\CC(X,Y)$.\vspace{.25cm}

\begin{prop}\label{prop4.07}($\mathbf{G_{\d}}$) Let $(X,\mu)$ and $(Y,\nu)$ be Cantor measure spaces. The set $\CC \S((X,\mu),(Y,\nu))$
of Cantor measure space maps
is a closed subset of $\CC_{sur}(X,Y)$, the set of continuous surjections. The set \\ $\CC \S_{iso}((X,\mu),(Y,\nu))$ of Cantor
measure space isomorphisms is a $G_{\d}$ subset.
\end{prop}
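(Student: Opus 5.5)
We already know that every Cantor measure space map is surjective, since $\nu$ is full. So $\CC \S((X,\mu),(Y,\nu)) \subset \CC_{sur}(X,Y)$, and it remains to show closedness. By (\ref{eq4.01}), a continuous map $T$ is measure preserving exactly when $\mu(T^{-1}(A)) = \nu(A)$ for every clopen $A \subset Y$. I would therefore write $\CC \S$ as the intersection, over the countably many clopen subsets $A$ of $Y$, of the sets $\{ T \in \CC(X,Y) : \mu(T^{-1}(A)) = \nu(A) \}$, and show that each of these is closed (in fact open as well).

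The key observation is that $T \mapsto T^{-1}(A)$ is locally constant on $\CC(X,Y)$ when $A$ is clopen. Since $A$ and $Y \setminus A$ are disjoint compact sets, they are a positive distance $\eta$ apart. If $d(T,T_1) < \eta$, then for every $x$ the points $T(x)$ and $T_1(x)$ lie in the same one of $A$ and $Y\setminus A$. Hence $T_1^{-1}(A) = T^{-1}(A)$, and so $T \mapsto \mu(T^{-1}(A))$ is locally constant, hence continuous. Its level set $\{\mu(T^{-1}(A)) = \nu(A)\}$ is then closed. Intersecting over the clopen $A$ gives that $\CC \S$ is closed in $\CC(X,Y)$, and hence in $\CC_{sur}(X,Y)$.

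For the $G_{\d}$ statement, I would use the remark preceding Definition \ref{df4.02}: a Cantor measure space map that is a homeomorphism is automatically a Cantor measure space isomorphism, and conversely every isomorphism is a homeomorphism. Therefore
\[
\CC \S_{iso}((X,\mu),(Y,\nu)) = \CC \S((X,\mu),(Y,\nu)) \cap \CC_{iso}(X,Y).
\]
By Proposition \ref{prop2.03}, $\CC_{iso}(X,Y)$ is $G_{\d}$ in $\CC(X,Y)$, so its trace on the subspace $\CC \S$ is $G_{\d}$ in $\CC \S$. Because $\CC\S$ is closed, and closed sets in a metric space are $G_\d$, the set $\CC \S_{iso}$ is also $G_{\d}$ in $\CC(X,Y)$ and in $\CC_{sur}(X,Y)$.

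There is no real obstacle here. The only point requiring care is the local constancy of $T^{-1}(A)$, and this uses the assumption that $A$ is clopen rather than merely Borel. It is also what makes restricting to clopen sets via (\ref{eq4.01}) essential.
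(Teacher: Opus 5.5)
Your proof is correct. The $G_\delta$ half is the paper's argument: identify $\CC\S_{iso}$ with $\CC\S\cap\CC_{iso}(X,Y)$ and invoke Proposition \ref{prop2.03}. The surjectivity remark is also the same. The difference lies in how you prove that the measure-preserving condition is closed.

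The paper characterizes measure preservation by $\int (f\circ T)\,d\mu=\int f\,d\nu$ for all continuous $f:Y\to\R$, and simply calls this ``clearly a closed condition.'' That implicitly uses two facts:
\begin{itemize}
\item $T\mapsto\int (f\circ T)\,d\mu$ is continuous in the uniform topology, because $f$ is uniformly continuous on the compact space $Y$;
\item a finite Borel measure on a compact metric space is determined by its integrals of continuous functions.
\end{itemize}

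You instead test only on clopen sets via (\ref{eq4.01}). You show that $T\mapsto T^{-1}(A)$ is locally constant on $\CC(X,Y)$, so each condition $\mu(T^{-1}(A))=\nu(A)$ cuts out a clopen subset of $\CC(X,Y)$. The trivial cases $A=\emptyset$ and $A=Y$, where your distance $\eta$ is not defined, are constant anyway.

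Your route is more elementary and gives slightly more, since each defining condition is open as well as closed. However, it depends on the zero-dimensionality of $Y$ and on the uniqueness-of-measures ($\pi$--$\lambda$) argument behind (\ref{eq4.01}). The paper's test-function argument works without change for arbitrary compact metric spaces.
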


\begin{proof} We saw above that any Cantor measure space map is surjective. A continuous function $T : X \to Y$ preserves measure exactly when
for every continuous function $f : Y \to \R, \ \int (f \circ T) \  d\mu = \int f \ d\nu$. This is clearly a closed condition and so
$\CC \S((X,\mu),(Y,\nu))$, the
set of Cantor measure space maps is a closed subset of $\CC_{sur}(X,Y)$.

By   Proposition \ref{prop2.03} $\CC_{iso}(X,Y)$ is a $G_{\delta}$ subset and so the intersection $\CC \S((X,\mu),(Y,\nu)) \cap \CC_{iso}(X,Y)$,
is a $G_{\delta}$ subset.  The intersection clearly contains the set of Cantor measure space isomorphisms. On the other hand, if $T : (X,\mu) \to (Y,\nu)$
is a Cantor measure space map with $T : X \to Y$ a homeomorphism then, as remarked above, $T$ is an isomorphism.  That is, the intersection equals the
set of isomorphisms.

\end{proof} \vspace{.25cm}

Now suppose that $(X,\mu)$ is a normalized, good Cantor measure space such that $t \in S(\mu)$ implies $t/2 \in S(\mu)$.  For example, this is true if
$S(\mu) = \Q \cap [0,1]$.  In that case, $2 \times ([0,\frac{1}{2}] \cap S(\mu)) = S(\mu)$.  If $A_1$ is a clopen set with $\mu(A_1) = \frac{1}{2}$ and
$A_2 = X \setminus A_1$,
then there exist isomorphisms $T_1 :  (A_1,2\mu|A_1) \to (X,\mu) $ and $T_2 :  (A_2,2\mu|A_2) \to (X,\mu)$. Combining these we obtain
a measure space map $T : (X,\mu) \to (X,\mu)$ which is the analogue in this context of the tent map.  \vspace{.25cm}

\begin{prop}\label{prop4.08}(\textbf{Density}) Let $T : (X,\mu) \to (Y,\nu)$ be a Cantor measure space map with $S(X,\mu) = S(Y,\nu)$.  For a metric $d$
 on $Y$ and $\ep > 0$, there exists a Cantor measure space isomorphism $T_1 :(X,\mu) \to (Y,\nu)$ such that
 \begin{equation}\label{eq4.08}
 x \in X \qquad \Longrightarrow \qquad d(T(x),T_1(x)) \le \ep.
 \end{equation}. \end{prop}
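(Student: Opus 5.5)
The plan is to copy the proof of Proposition \ref{prop2.04}, using Theorem \ref{theo4.06}(c) as the uniqueness result in place of Proposition \ref{prop2.02}. Throughout I take $(X,\mu)$ and $(Y,\nu)$ to be good Cantor measure spaces, as in the setting of Theorem \ref{theo4.06}. The hypothesis $S(\mu) = S(\nu)$ is what allows the pieces to be matched.

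First choose a clopen partition $\P = \{ U_1, \dots, U_n \}$ of $Y$ with $d$-mesh at most $\ep$. Each $U_i$ is a nonempty clopen set, and $\nu$ is full, so $\nu(U_i) > 0$. Because $T$ is continuous and measure preserving, each $V_i = T^{-1}(U_i)$ is clopen in $X$ with $\mu(V_i) = \nu(U_i) > 0$. In particular $V_i$ is nonempty, and $\{ V_1, \dots, V_n \}$ is a clopen partition of $X$.

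Next, by the remark following Definition \ref{df4.04}, the restrictions $(V_i,\mu|V_i)$ and $(U_i,\nu|U_i)$ are good Cantor measure spaces. By (\ref{eq4.04a}) their clopen values sets are
$$S(\mu|V_i) = S(\mu) \cap [0,\mu(V_i)] \quad \text{and} \quad S(\nu|U_i) = S(\nu) \cap [0,\nu(U_i)].$$
These sets coincide because $S(\mu) = S(\nu)$ and $\mu(V_i) = \nu(U_i)$. The total masses also agree, so Theorem \ref{theo4.06}(c) supplies a Cantor measure space isomorphism $\hat T_i : (V_i, \mu|V_i) \to (U_i, \nu|U_i)$.

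Finally, concatenate these to define $T_1 : X \to Y$ by $T_1 = \hat T_i$ on $V_i$. Since the $V_i$ are clopen and the $U_i$ partition $Y$, $T_1$ is a homeomorphism. It preserves measure: for any clopen $A \subset Y$,
$$\mu(T_1^{-1}(A)) = \sum_i \mu(\hat T_i^{-1}(A \cap U_i)) = \sum_i \nu(A \cap U_i) = \nu(A),$$
which is condition (\ref{eq4.01}). Hence $T_1$ is a Cantor measure space isomorphism. As in (\ref{eq2.07}), $T(x)$ and $T_1(x)$ always lie in the same $U_i$, which has diameter at most $\ep$, so (\ref{eq4.08}) follows.

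The only real point to check is the middle step: that the pieces satisfy the hypotheses of Theorem \ref{theo4.06}(c). This needs goodness to pass to clopen subsets, the identity (\ref{eq4.04a}) for the values sets, and the equality of the masses $\mu(V_i) = \nu(U_i)$. The equality $S(\mu) = S(\nu)$ is used exactly here.
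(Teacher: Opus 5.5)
Your proposal is correct and follows essentially the same route as the paper: pull back a clopen partition of small $d$-mesh, apply Theorem \ref{theo4.06}(c) on each piece using (\ref{eq4.04a}), $S(\mu)=S(\nu)$ and $\mu(T^{-1}(U_i))=\nu(U_i)$, then concatenate the pieces. You were right to state explicitly that $(X,\mu)$ and $(Y,\nu)$ are good; the statement omits this, but the paper's proof uses it when it calls the pieces good Cantor measure spaces.
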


 \begin{proof} The proof is almost the same as that of Proposition \ref{prop2.04}.
 Let $\P = \{U_1, \dots, U_n \}$ be a clopen partition of $Y$ with $d$-mesh at most $\ep$. As before
 $ \{ T^{-1}(U_1), \dots, T^{-1}(U_n) \}$ is a clopen partition of $X$.
For each $i$,  $(U_i,\nu|U_i)$ and  $(T^{-1}(U_i),\mu|T^{-1}(U_i)),$ are good Cantor measure spaces with the same clopen values set and so
 Theorem \ref{theo4.06}  implies we can
 choose an isomorphism $\hat T_i $ from $ (T^{-1}(U_i),\mu|T^{-1}(U_i))$ onto $(U_i,\nu|U_i)$. Concatenating we obtain the isomorphism
  $T_1 : (X,\mu) \to (Y,\nu)$. Again
 for all $x \in X$
 \begin{equation}\label{eq4.09}
 T(x) \in U_i \qquad \Longleftrightarrow \qquad T_1(x) \in U_i.
 \end{equation}
and so (\ref{eq4.08}) follows.

 \end{proof}\vspace{.25cm}

 Thus, we obtain

 \begin{theo}\label{theo4.09}If $(X,\mu)$ and $(Y,\nu)$ are good Cantor measure spaces with the same clopen values set, then the set
 $\CC \S_{iso}((X,\mu),(Y,\nu))$
 of Cantor measure isomorphisms from $(X,\mu)$ to $(Y,\nu)$ is a dense $G_{\d}$ subset of the completely metrizable space $\CC \S((X,\mu),(Y,\nu))$of
 Cantor measure space maps from $(X,\mu)$ to $(Y,\nu)$. Thus, the generic Cantor measure space map between them is invertible.\end{theo}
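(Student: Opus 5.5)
The plan is to assemble Theorem \ref{theo4.09} from the three ingredients already in place, exactly as in Theorem \ref{theo2.05}: complete metrizability of the ambient space, the $G_{\d}$ property from Proposition \ref{prop4.07}, and density from Proposition \ref{prop4.08}, with uniqueness supplied by Theorem \ref{theo4.06}.

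\textbf{Step 1: complete metrizability.} The space $\CC(X,Y)$ with the sup metric (\ref{eq2.05}) is a complete metric space. By Proposition \ref{prop2.03}, $\CC_{sur}(X,Y)$ is closed in $\CC(X,Y)$. By Proposition \ref{prop4.07}, $\CC \S((X,\mu),(Y,\nu))$ is closed in $\CC_{sur}(X,Y)$. Hence $\CC \S((X,\mu),(Y,\nu))$ is a closed subset of a complete metric space, so it is complete under the restricted sup metric. If one wants to recheck closedness directly: $T$ preserves measure iff $\int (f\circ T)\,d\mu = \int f\,d\nu$ for every continuous $f$. Each such $f$ is uniformly continuous on compact $Y$, so $T\mapsto \int (f\circ T)\,d\mu$ is continuous in the sup metric, and each condition is closed.

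\textbf{Step 2: the $G_{\d}$ property.} Proposition \ref{prop4.07} already shows that $\CC \S_{iso}((X,\mu),(Y,\nu))$ equals $\CC \S((X,\mu),(Y,\nu)) \cap \CC_{iso}(X,Y)$. This is a $G_{\d}$ in $\CC \S((X,\mu),(Y,\nu))$, since $\CC_{iso}(X,Y)$ is $G_{\d}$ in $\CC(X,Y)$ by Proposition \ref{prop2.03}.

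\textbf{Step 3: density.} Fix $T \in \CC \S((X,\mu),(Y,\nu))$ and $\ep>0$. By hypothesis $S(\mu) = S(\nu)$, so Proposition \ref{prop4.08} applies directly and gives an isomorphism $T_1$ with $d(T,T_1)\le\ep$.

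The step deserving care is the hidden use of uniqueness inside Proposition \ref{prop4.08}. For each cell $U_i$ of the clopen partition, the pieces $(T^{-1}(U_i),\mu|T^{-1}(U_i))$ and $(U_i,\nu|U_i)$ must be good with equal clopen values sets. Goodness passes to clopen subsets. By (\ref{eq4.04a}) the two values sets are $S(\mu)\cap[0,\mu(T^{-1}U_i)]$ and $S(\nu)\cap[0,\nu(U_i)]$. These agree because $S(\mu)=S(\nu)$ and measure preservation gives $\mu(T^{-1}U_i)=\nu(U_i)$. Theorem \ref{theo4.06}(c) then gives an isomorphism on each piece. Concatenating these yields a homeomorphism that preserves measure on clopens, hence an isomorphism by (\ref{eq4.01}).

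\textbf{Conclusion.} The isomorphisms form a dense $G_{\d}$ subset of the completely metrizable space $\CC \S((X,\mu),(Y,\nu))$. By the Baire Category Theorem the generic Cantor measure space map is invertible.

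I expect no real obstacle. The only points to confirm are that the ambient space is complete rather than merely a subspace (Step 1), and that the equal-values hypothesis localizes to each cell (Step 3).
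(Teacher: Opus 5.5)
Your proposal is correct and is essentially the paper's argument: the theorem is obtained by combining the closedness and $G_{\d}$ statements of Proposition \ref{prop4.07} with the density result of Proposition \ref{prop4.08}, whose proof applies Theorem \ref{theo4.06} to each cell of a clopen partition exactly as you describe. Your check that the clopen values sets of $T^{-1}(U_i)$ and $U_i$ agree, via (\ref{eq4.04a}) and $\mu(T^{-1}(U_i)) = \nu(U_i)$, simply spells out a step that the paper states without comment.
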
 \vspace{.25cm}

\textbf{ Notes:} The theory of normalized good Cantor measure spaces is laid out in \cite{A99} and \cite{A05}. In Theorem 2.18 of \cite{A05} an
example is given of two Bernoulli measures with the same group-like clopen values set one of which is good and the other is not and so they are not isomorphic.
In \cite{ADMY} those Bernoulli measures which are good are characterized. \vspace{.5cm}

  \bibliographystyle{amsplain}

\end{document}